\documentclass[11pt,a4paper]{article}
\usepackage[T1]{fontenc}
\usepackage[utf8]{inputenc}
\usepackage[english]{babel}
\usepackage{lmodern}
\usepackage{microtype}
\usepackage{geometry}
\usepackage{amsmath,amssymb,amsthm,mathtools}
\usepackage{booktabs}
\usepackage{enumitem}
\usepackage{xcolor}
\usepackage{hyperref}
\usepackage{cleveref}
\hypersetup{
 colorlinks=true,
 linkcolor=blue!45!black,
 citecolor=blue!45!black,
 urlcolor=blue!45!black,
 pdftitle={Smallest String Attractors and Minimal Coverage Certificates of Thue--Morse Words},
 pdfauthor={Simone Faro, Francesco Pio Marino, Arianna Pavone},
 pdfsubject={String attractors and minimal factor-coverages of finite Thue--Morse words},
 pdfkeywords={string attractor, Thue--Morse word, minimal unique substring, morphism, combinatorics on words}
}
\newtheorem{theorem}{Theorem}[section]
\newtheorem{lemma}[theorem]{Lemma}
\newtheorem{proposition}[theorem]{Proposition}
\newtheorem{corollary}[theorem]{Corollary}
\theoremstyle{definition}\newtheorem{definition}[theorem]{Definition}
\theoremstyle{remark}\newtheorem{remark}[theorem]{Remark}
\newcommand{\Occ}{\operatorname{Occ}}
\newcommand{\Cov}{\operatorname{Cov}}
\newcommand{\MUS}{\operatorname{MUS}}
\newcommand{\Acal}{\mathcal{A}}
\newcommand{\Hmin}{\mathcal{H}_{\min}}
\newcommand{\FL}{\mathcal{F}^{L}}
\newcommand{\FR}{\mathcal{F}^{R}}
\title{\textbf{Smallest String Attractors and Minimal Coverage Certificates of Thue--Morse Words}\\[0.5ex]
\large Extended Version}
\author{%
Simone Faro\thanks{Department of Mathematics and Computer Science, University of Catania, Catania, Italy.}
\and Francesco Pio Marino\thanks{Department of Mathematics and Computer Science, University of Catania, Catania, Italy; Univ Rouen Normandie, INSA Rouen Normandie, Universit\'e Le Havre Normandie, Normandie Univ, LITIS UR 4108, CNRS NormaSTIC FR 3638, IRIB, Rouen, France.}
\and Arianna Pavone\thanks{Department of Mathematics and Computer Science, University of Palermo, Palermo, Italy.}}
\date{}
\begin{document}
\maketitle

\begin{abstract}
String attractors provide a compact way of representing the complete factor structure of a word: a set of positions is an attractor if every distinct factor has at least one occurrence crossing one of the selected positions. Although the minimum attractor size is known for several classical families of words, describing \emph{all} optimal attractors is typically much more difficult, since it requires understanding the geometry of all factor occurrences rather than constructing a single optimal solution.

We give a complete description for the finite Thue--Morse words. Earlier work proved that four positions are necessary and sufficient for every order $n\geq 4$, but the collection of all smallest attractors remained unknown. For every $n\geq 6$, writing $h=2^{n-3}$, we prove that the smallest attractors are exactly the two reflected families
$
 \{2h+\varepsilon_1,3h+\varepsilon_2,4h+\varepsilon_3,6h+\varepsilon_4\}
$
and
$
 \{2h+\varepsilon_1,4h+\varepsilon_2,5h+\varepsilon_3,6h+\varepsilon_4\},
$
where the four offsets are chosen independently from $\{0,1\}$. Hence there are exactly $32$ smallest attractors for every $n\geq6$. The initial cases are genuinely exceptional: $t_5$ has $40$ smallest attractors and $t_4$ has $87$.

We also study the attractor condition independently of optimality. For every $n\geq5$, we characterize the complete antichain of inclusion-minimal factor coverages of $t_n$. It consists precisely of the coverages of $aa$, $bb$, and the eight minimal unique substrings of every generation $t_m$, $4\leq m\leq n$. Thus there are exactly $8n-22$ canonical constraints, forming an irredundant exact certificate for attractors of arbitrary cardinality.

When attention is restricted to four-position sets, this linear-size system collapses to a constant one: it is enough to test the $24$ minimal unique substrings coming from three consecutive generations, and sixteen of these already force the two optimal families. We also show that three generations are necessary within this natural consecutive-generation hierarchy.

The proofs combine the recursive boundary geometry of the Thue--Morse word, a trimmed recurrence for its minimal unique substrings, exact transport of occurrences across generations, and a desubstitution argument that identifies all inclusion-minimal coverage constraints.
\end{abstract}

\section{Introduction}

\subsection{String attractors and the classification problem}
A string attractor is a small set of positions that collectively ``see'' every distinct factor of a word. More precisely, a set of positions $\Gamma$ is a string attractor of a finite word $w$ if every nonempty factor $x$ of $w$ has at least one occurrence whose interval contains a position of $\Gamma$.

This definition is deceptively simple. A single selected position may attract many different factors at once, because every factor occurrence crossing that position is automatically represented. The problem is therefore highly global: choosing a good attractor requires understanding how occurrence intervals of many unrelated factors overlap.

String attractors were introduced by Kempa and Prezza as a common abstraction underlying several measures of repetitiveness and dictionary-compression schemes~\cite{KempaPrezza2018}. Their minimum size, denoted by $\gamma(w)$, is now a natural combinatorial invariant of a word. Determining $\gamma(w)$, however, answers only the first of two questions.

The first question is numerical: how many positions are necessary and sufficient to attract every factor? The second is structural: which sets of that minimum cardinality are actually attractors? The second problem is considerably more informative. Two word families may have the same minimum attractor size while exhibiting completely different numbers and geometries of optimal solutions. Knowing one optimal construction does not explain how rigid the optimum is, whether the selected positions are forced to lie near particular recursive boundaries, or how many independent choices remain.

The purpose of this paper is to answer both questions completely for the finite Thue--Morse words, and then to go one step further by describing the minimal constraints that characterize \emph{all} attractors, without restricting their cardinality.

\subsection{Why the Thue--Morse word?}
Let
$
\mu(a)=ab
$ 
and 
$
\mu(b)=ba
$
be the Thue--Morse morphism, and let $t_n=\mu^n(a)$. The words $t_n$ are among the most classical objects in combinatorics on words. They combine a very simple recursive definition with a surprisingly rich local factor structure.

Their self-similarity makes them a natural test case for repetitiveness measures. At the same time, that self-similarity does not make the attractor problem trivial. A factor may occur at several scales and at different locations inside the recursive decomposition. Consequently, a characterization of all smallest attractors cannot be obtained merely by inspecting the four canonical boundaries that appear in one known construction.

Kutsukake et al.~\cite{Kutsukake2020} proved the striking identity
$
\gamma(t_n)=4$
for $n\geq4$,
and exhibited a four-position attractor. Their theorem determines the optimum cardinality but does not characterize all optimal position sets.

More recently, Banbara et al.~\cite{Banbara2026} characterized all smallest attractors for Fibonacci and period-doubling words and explicitly left the analogous Thue--Morse problem open in the extended version of their work. Their computational experiments also indicate why a direct brute-force treatment is unattractive: a Walnut computation involving four free attractor positions exceeded the available memory. This suggests that a satisfactory solution should expose the recursive combinatorial structure behind the attractors rather than enumerate them directly.

We provide such a structural characterization.

\subsection{All smallest attractors}
Our first main result determines every smallest attractor of $t_n$. For $n\geq6$, write $h=2^{n-3}$, so that $|t_n|=8h$. The complete solution stabilizes from this point onward.

There are two families
\[
\FL_n=\bigl\{\{2h+\varepsilon_1,3h+\varepsilon_2,4h+\varepsilon_3,6h+\varepsilon_4\}:\varepsilon_1,\ldots,\varepsilon_4\in\{0,1\}\bigr\},
\]
and
\[
\FR_n=\bigl\{\{2h+\varepsilon_1,4h+\varepsilon_2,5h+\varepsilon_3,6h+\varepsilon_4\}:\varepsilon_1,\ldots,\varepsilon_4\in\{0,1\}\bigr\}.
\]
Every smallest attractor belongs to exactly one of these two families, and every member of the two families is an attractor.

The form of the answer has a useful geometric interpretation. The relevant positions occur in adjacent pairs $\{jh,jh+1\}$ around a small number of recursive boundaries. Within either family, one independently chooses one endpoint of each of four such boundaries. Thus each family contains $2^4=16$ attractors, giving $|\Acal(t_n)|=32$ for $n\geq6$.

The stabilization is not visible in the two preceding orders. We find $|\Acal(t_5)|=40$ and $|\Acal(t_4)|=87$. These exceptional cases are handled separately and also serve as useful checks on the stable theory.

\subsection{Beyond smallest attractors: the canonical coverage constraints}
The classification above concerns attractors of minimum cardinality. A different question is to understand the attractor condition itself.

For a factor $x$ of a word $w$, let $\Cov_w(x)$ denote the set of positions covered by at least one occurrence of $x$. Then
\[
\Gamma\text{ is an attractor of }w
\quad\Longleftrightarrow\quad
\Gamma\cap\Cov_w(x)\neq\varnothing
\]
for every nonempty factor $x$.

Most of these coverage constraints are redundant. If $\Cov_w(x)\subseteq\Cov_w(y)$, then every set hitting the coverage of $x$ automatically hits the coverage of $y$. The essential constraints are therefore the inclusion-minimal members of
\[
\mathcal H(w)=\{\Cov_w(x):x\text{ is a nonempty factor of }w\}.
\]
We determine these constraints exactly for Thue--Morse words.

For every $n\geq5$, the inclusion-minimal factor coverages of $t_n$ are precisely $\Cov_{t_n}(aa)$ and $\Cov_{t_n}(bb)$, together with $\Cov_{t_n}(M)$ for every $M\in\MUS(t_m)$ and $4\leq m\leq n$. Since every generation $t_m$, $m\geq4$, has eight minimal unique substrings, the total number of minimal constraints is
\[
2+8(n-3)=8n-22.
\]
This result is conceptually different from the classification of the smallest attractors. It characterizes the exact constraint system defining \emph{every} attractor of $t_n$, irrespective of cardinality. These constraints form an antichain and none can be removed.

\subsection{Why minimal unique substrings appear}
Minimal unique substrings, or MUSs, play a central role in the proof. A MUS is a factor that occurs exactly once but becomes repeated as soon as either end is removed. Since its occurrence is unique, every attractor must cross that occurrence. Thus MUSs provide natural necessary constraints.

For Thue--Morse words, Kishi et al.~\cite{Kishi2026} gave a closed characterization of all MUSs. We use that result as an external structural ingredient. What is less immediate is how MUSs belonging to different generations behave when they are all viewed inside the same larger word $t_n$. A factor that is unique in its source word $t_m$ generally acquires several occurrences in $t_n$. The coverage relevant to the attractor problem is the union of \emph{all} those occurrences.

The key technical mechanism is a particularly simple recurrence between consecutive generations. The MUS of generation $m+1$ corresponding to a MUS $M$ of generation $m$ is obtained by applying the Thue--Morse morphism and deleting the first and last symbols:
$
M^{(m+1)}=\operatorname{trim}\!\left(\mu(M^{(m)})\right).
$
We prove an exact transport rule for their occurrence starts, valid even when the factors are considered inside later ambient generations. This allows coverage information at arbitrary scale to be derived from a constant amount of base data.

\subsection{From \texorpdfstring{$8n-22$}{8n-22} constraints to a constant certificate}
The full canonical system contains $8n-22$ constraints, which is optimal when attractors of arbitrary cardinality are considered. Smallest attractors, however, contain exactly four positions. This severe cardinality restriction introduces additional geometric rigidity.

We prove that, for four-position sets, it is enough to test only the MUSs belonging to three consecutive generations:
$
\MUS(t_n)\cup\MUS(t_{n-1})\cup\MUS(t_{n-2}).
$
There are exactly $24$ such factors. Thus, for every $n\geq6$, a four-position set is an attractor if and only if it intersects the ambient coverage of each of these $24$ factors. This depth-three statement is best possible within the natural hierarchy obtained by taking consecutive MUS generations: the MUSs of $t_n$ and $t_{n-1}$ alone do not suffice.

The proof of the complete classification needs even less. We isolate sixteen carefully chosen MUS-derived coverage sets. Four of them are pairwise disjoint, so a four-position hitting set is forced to place exactly one position in each of four regions. The remaining witnesses progressively restrict those positions until only the two families $\FL_n$ and $\FR_n$ remain. This sixteen-factor transversal argument is the combinatorial core of the classification.

\subsection{Proof ideas}
Two different recursive mechanisms underlie the two main results.

For the characterization of the smallest attractors, the crucial object is the recursive boundary geometry of the Thue--Morse word. We prove a centered-occurrence property: every factor of length at least two has an occurrence containing both endpoints of one of four distinguished recursive boundaries. Consequently, any set choosing one endpoint of each of the appropriate four boundaries attracts every factor of length at least two; the one-letter factors are then checked separately. This gives sufficiency.

Necessity is proved through transported MUS coverages. Their geometry produces the sixteen-factor witness system described above and forces every four-position attractor into one of the two stable families.

For the canonical coverage theorem, the argument is different. We first show that an inclusion-minimal coverage admits a shortest representative that is \emph{two-sided frequency-critical}: removing either its first or its last symbol strictly increases the number of occurrences. A synchronization argument then gives a unique morphic phase for every sufficiently long factor. Critical factors can therefore be desubstituted, and criticality is preserved under the descent.

Iterating this process reduces every long critical factor to one of the eight critical factors of length four, which are exactly the MUSs of $t_4$. Reversing the descent recovers the MUS hierarchy across all generations. Finally, exact occurrence geometry shows that these MUS-derived coverages, together with the coverages of $aa$ and $bb$, are pairwise incomparable. This proves that they are exactly the inclusion-minimal factor coverages.

\subsection{Main contributions}
Our results can be summarized as follows.
\begin{enumerate}[leftmargin=*]
\item We completely characterize the smallest string attractors of every finite Thue--Morse word. For $n\geq6$, there are exactly two stable families and $32$ solutions; $t_5$ and $t_4$ are exceptional, with $40$ and $87$ solutions, respectively.
\item For every $n\geq5$, we determine all inclusion-minimal factor coverages of $t_n$. There are exactly $8n-22$, namely the coverages of $aa$, $bb$, and the eight MUSs of every generation from $4$ through $n$. They form the canonical irredundant exact certificate for attractors of arbitrary cardinality.
\item For four-position sets, we obtain a constant-size exact certificate consisting of the $24$ MUSs of three consecutive generations. We show that three generations are necessary within this consecutive-generation hierarchy.
\item We isolate a sixteen-factor witness system whose coverage geometry forces precisely the two stable families of smallest attractors.
\item Using the MUS characterization of Kishi et al., we derive a Thue--Morse-specific trimmed recurrence and an exact occurrence-transport rule across generations. These provide the bridge from finite base data to the uniform coverage formulas used throughout the paper.
\end{enumerate}

\subsection{Organization of the paper}
Section~\ref{sec:related-work} reviews the literature on string attractors, Thue--Morse words, automatic methods, and minimal unique substrings, with particular attention to the results on which our proofs build. Section~3 introduces notation and the structural properties of finite Thue--Morse words needed later, including the recursive boundary geometry that supports the sufficiency argument. Section~4 develops the transition from the known MUS structure to exact occurrence coverages. Section~5 proves the stable characterization of all smallest attractors. Section~6 turns from smallest attractors to arbitrary attractors and characterizes the complete family of inclusion-minimal factor coverages. The remaining sections treat the exceptional words $t_4$ and $t_5$, describe the independent computational verification used as a consistency check, and discuss the broader implications of the two characterizations.

\section{Related work}
\label{sec:related-work}

The present paper sits at the intersection of three lines of work: string attractors as measures of repetitiveness, exact attractor questions for structured word families, and the combinatorics of minimal unique substrings. We summarize only the aspects directly used or compared with our results.

\subsection{String attractors and repetitive structure}
String attractors were introduced by Kempa and Prezza~\cite{KempaPrezza2018} as a unifying abstraction for several dictionary-compression mechanisms. Their definition isolates a simple combinatorial requirement---every distinct factor must have an occurrence crossing a selected position---while still capturing lower bounds for a range of compressed representations.

The subsequent combinatorial study of attractors showed that classical word families provide a useful laboratory for understanding how the minimum attractor size interacts with factor structure. Mantaci et al.~\cite{Mantaci2019,Mantaci2021} studied attractors from this point of view for several families, including Thue--Morse words. More recently, attractor-based complexity functions have also been investigated for infinite words~\cite{Cassaigne2024}. These directions motivate treating $\gamma(w)$ not only as an algorithmic parameter but also as a combinatorial invariant.

At a more algorithmic level, B\'eal, Crochemore, and Romana~\cite{Beal2025} gave a linear-time attractor checker based on suffix automata together with a greedy construction procedure. Such algorithms answer the decision or construction problem for a given word; our goal is different, namely to describe the entire family of optimum position sets for one highly structured infinite family of finite words.

\subsection{Attractors of Thue--Morse and related morphic words}
For the canonical finite Thue--Morse words, Mantaci et al.~\cite{Mantaci2019} initially conjectured that the minimum attractor size grows with the order. Kutsukake et al.~\cite{Kutsukake2020} disproved this by proving the constant identity $\gamma(t_n)=4$ for all $n\ge4$ and by giving a four-position construction based on recursive boundaries.

Dolce~\cite{Dolce2023} subsequently considered arbitrary factors of the infinite Thue--Morse word. His result shows that every such factor admits a small attractor and establishes a sharp upper bound of five. This is complementary to the present problem: here the ambient words are the canonical prefixes $t_n$, and the objective is to classify all optimum position sets rather than to construct one attractor for each factor.

The work most directly connected with our classification is that of Banbara et al.~\cite{Banbara2026}. They determined all smallest attractors of finite Fibonacci and period-doubling words and used the number and geometry of smallest attractors to distinguish families sharing the same minimum cardinality. Their extended version explicitly lists the corresponding Thue--Morse classification as an open problem. It also reports that a direct Walnut computation with four free attractor positions exceeded the available memory, reinforcing the need for a structural treatment.

Automatic methods provide a complementary viewpoint. Schaeffer and Shallit~\cite{Schaeffer20} formulated attractor properties of automatic sequences in first-order logic and studied them computationally using Walnut~\cite{Mousavi2016}. We use neither automatic decision procedures nor their structural claims in our proofs; our arguments instead exploit the recursive block geometry and explicit factor coverages of finite Thue--Morse words.

\subsection{Minimal unique substrings and occurrence transport}
Minimal unique substrings have been studied independently of attractors, both algorithmically and combinatorially; see, for example, Ilie and Smyth~\cite{IlieSmyth2011}. Their relevance here is immediate: a MUS has a unique occurrence, hence every attractor must cross that occurrence interval.

Kishi et al.~\cite{Kishi2026} recently introduced a general framework of occurrence-preserving morphisms and applied it to finite Fibonacci and Thue--Morse words. In particular, they obtained the closed characterization of the eight MUSs in every sufficiently large finite Thue--Morse generation. This theorem is an external structural ingredient of our proofs.

Our occurrence-transport lemma should be read in that context but not as a generalization of their framework. Their position-level correspondence concerns ordinary morphic images under suitable interference-free hypotheses. The factors needed here are instead the actual consecutive-generation Thue--Morse MUSs, obtained by deleting one symbol from each end of the morphic image. We prove the exact start-position correspondence for these specific trimmed factors because their full ambient coverages are what enter the attractor witness system.

\subsection{Position of the present results}
The previous results determine the minimum cardinality for $t_n$, construct individual attractors, characterize MUSs, or classify smallest attractors for other morphic families. The two questions addressed here remain distinct from those contributions. First, we classify \emph{every} smallest attractor of every finite Thue--Morse word. Second, we characterize the complete family of inclusion-minimal factor coverages and hence the canonical irredundant constraint system defining attractors of arbitrary cardinality. The latter result goes beyond the smallest-attractor classification and explains why the MUS hierarchy is the correct source of constraints across scales.

\section{Preliminaries and structural intuition}
\label{sec:preliminaries}

This section fixes the notation used throughout the paper and, more importantly, develops the geometric picture behind the proofs. The formal definitions are standard, but two viewpoints will be used repeatedly: factors are represented by the \emph{sets of positions covered by their occurrences}, and the Thue--Morse word is read through a hierarchy of recursively nested blocks. Keeping these two pictures in mind makes the later witness arguments considerably easier to follow.

\subsection{Words, occurrences, and coverage sets}
All positions are one-based. For integers $i\le j$, we write
$
[i,j]=\{i,i+1,\ldots,j\}.
$
If $w$ is a finite word, then $|w|$ denotes its length and $w[i..j]$ the factor from position $i$ through position $j$.

For a factor $x$ of $w$, let
$
\Occ_w(x)=\{p:w[p..p+|x|-1]=x\}
$
be the set of starting positions of its occurrences. Each occurrence beginning at $p$ occupies the interval
$
[p,p+|x|-1].
$
The \emph{coverage} of $x$ in $w$ is the union of all these intervals:
\begin{equation}
\Cov_w(x)=\bigcup_{p\in\Occ_w(x)}[p,p+|x|-1].
\label{eq:coverage-definition}
\end{equation}

The distinction between an occurrence and a coverage is important. A factor can have many occurrences, but from the attractor point of view only the union of the positions covered by those occurrences matters. If a selected position lies in $\Cov_w(x)$, then at least one occurrence of $x$ crosses that position.

\begin{definition}[String attractor]
A set $\Gamma\subseteq[1,|w|]$ is a \emph{string attractor} of $w$ if
$
\Gamma\cap\Cov_w(x)\neq\varnothing
$
for every nonempty factor $x$ of $w$.
\end{definition}

Equivalently, the attractor problem is a hitting-set problem whose universe is the set of positions of $w$ and whose constraints are the factor coverages. This simple reformulation is the bridge between the combinatorics of factors and the geometric arguments used later.

\paragraph{A small example.}
Consider $w=ababa$ and the factor $x=aba$. It occurs at positions $1$ and $3$, hence
$
\Cov_w(aba)=[1,3]\cup[3,5]=[1,5].
$
Thus every nonempty set of positions hits the coverage of $aba$, despite the fact that $aba$ has only two occurrences. By contrast, a factor with a unique occurrence contributes a single interval and can impose a much more localized constraint.

\subsection{Minimal coverages as the essential constraints}
Let
$
\mathcal H(w)=\{\Cov_w(x):x\text{ is a nonempty factor of }w\},
$
and let $\Hmin(w)$ denote the inclusion-minimal members of $\mathcal H(w)$.

The relevance of $\Hmin(w)$ is immediate but fundamental. If $C,D\in\mathcal H(w)$ satisfy $C\subseteq D$, then every set hitting $C$ also hits $D$. Hence $D$ is redundant as a constraint once $C$ is present.

\begin{proposition}[Minimal-coverage reduction]
\label{prop:minimal-coverage-reduction}
For every finite word $w$ and every $\Gamma\subseteq[1,|w|]$,
\[
\Gamma\text{ is a string attractor of }w
\quad\Longleftrightarrow\quad
\Gamma\cap C\neq\varnothing
\text{ for every }C\in\Hmin(w).
\]
\end{proposition}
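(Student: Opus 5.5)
The plan is to prove the two implications separately, using only the definition of a string attractor and the fact that $\mathcal H(w)$ is a finite family of sets.

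\emph{Forward direction.} Suppose $\Gamma$ is an attractor. Every $C\in\Hmin(w)$ lies in $\mathcal H(w)$, so $C=\Cov_w(x)$ for some nonempty factor $x$. The attractor definition then gives $\Gamma\cap C\neq\varnothing$ directly.

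\emph{Reverse direction.} Suppose $\Gamma$ meets every member of $\Hmin(w)$, and let $x$ be any nonempty factor of $w$. The key step is to show that the coverage $D=\Cov_w(x)$ contains some $C\in\Hmin(w)$.
\begin{itemize}
\item Since $w$ has finitely many factors, $\mathcal H(w)$ is a finite set, and inclusion is a partial order on it.
\item Consider the set of members of $\mathcal H(w)$ contained in $D$. It is nonempty, since it contains $D$ itself, and it is finite.
\item A member of this set with the smallest cardinality is inclusion-minimal within it. Call it $C$.
\item $C$ is in fact minimal in all of $\mathcal H(w)$: any $C'\in\mathcal H(w)$ with $C'\subsetneq C$ would also satisfy $C'\subseteq D$, contradicting the minimality of $C$ within the set.
\end{itemize}
Hence $C\in\Hmin(w)$. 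By hypothesis there is a position $p\in\Gamma\cap C$, and since $C\subseteq D$ we get $p\in\Gamma\cap\Cov_w(x)$. As $x$ was arbitrary, $\Gamma$ is an attractor.

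There is no real obstacle here. The only point needing care is that minimal elements below $D$ exist, and this is guaranteed by finiteness (alternatively, by strictly descending through coverages of decreasing cardinality). Note also that coverages of nonempty factors are nonempty, so no degenerate empty constraint arises.
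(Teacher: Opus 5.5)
Your proof is correct and follows the paper's argument. The forward direction is immediate from the definition. The converse uses finiteness of $\mathcal H(w)$ to place an inclusion-minimal coverage inside each $\Cov_w(x)$; your minimum-cardinality choice simply makes explicit why such a minimal element exists, which the paper leaves implicit.
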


\begin{proof}
The forward implication follows directly from the definition. Conversely, $\mathcal H(w)$ is finite, so every member of $\mathcal H(w)$ contains at least one inclusion-minimal member. A set hitting every element of $\Hmin(w)$ therefore hits the coverage of every factor.
\end{proof}

This proposition motivates one of the two main questions of the paper. Instead of asking only for minimum-cardinality attractors, we can ask for the smallest possible \emph{constraint system} that defines the attractor property. For Thue--Morse words we will identify this system exactly.

\subsection{The finite Thue--Morse words}
We work over the binary alphabet $\{a,b\}$. Let
$
\mu(a)=ab$ and
$
\mu(b)=ba,
$
and define
\[
t_0=a,\qquad t_n=\mu^n(a).
\]
We write $\overline{w}$ for the binary complement of a word $w$, obtained by exchanging $a$ and $b$. Then
$
|t_n|=2^n
$
and the basic recursion is
\begin{equation}
t_n=t_{n-1}\,\overline{t_{n-1}}.
\label{eq:tm-recursion}
\end{equation}
Applying the recursion twice gives the four-block decomposition
\begin{equation}
t_n=t_{n-2}\,\overline{t_{n-2}}\,\overline{t_{n-2}}\,t_{n-2}.
\label{eq:tm-four-block}
\end{equation}

For orientation, the first few words are
\[
\begin{aligned}
t_0&=a,\\
t_1&=ab,\\
t_2&=abba,\\
t_3&=abbabaab,\\
t_4&=abbabaabbaababba.
\end{aligned}
\]
The precise letters will quickly become less important than the recursive block structure. Equations~\eqref{eq:tm-recursion} and~\eqref{eq:tm-four-block} let us locate a factor relative to large canonical blocks and then descend to smaller scales.

\subsection{The decomposition tree viewpoint}
It is useful to picture $t_n$ as the root of a perfect binary decomposition tree. A node labelled by $t_k$ has two children labelled by $t_{k-1}$ and $\overline{t_{k-1}}$; a node labelled by $\overline{t_k}$ has the complementary pair of children. The leaves correspond to individual positions of $t_n$.

Now take an occurrence interval $[i,j]$ with $i<j$. The lowest node whose block contains both endpoints has the property that $i$ lies in its left child and $j$ lies in its right child. Therefore the interval necessarily crosses the boundary between those two children.

This observation is elementary, but it is the reason recursive boundaries enter the attractor problem at all: every factor occurrence of length at least two crosses the center boundary of some canonical block. The real difficulty is to show that the same factor can be realized at one of only a few \emph{global} boundaries, independent of the factor. That is the content of the centered-occurrence property proved later.

\subsection{Four canonical boundaries}
For $n\ge4$, define
\begin{equation}
K_n=\left\{2^{n-2},\;3\cdot2^{n-3},\;2^{n-1},\;3\cdot2^{n-2}\right\}.
\label{eq:canonical-boundaries}
\end{equation}
A number $c\in K_n$ represents the boundary $c\mid c+1$.

These four boundaries arise at different levels of the recursive decomposition. Two are the centers of the two halves of $t_n$, one is the global center of $t_n$, and one comes from the four-block decomposition~\eqref{eq:tm-four-block}. Repeatedly descending by two generations preserves the same center, so copies of smaller $t_k$ or $\overline{t_k}$ reappear centered at one of these four locations.

This is the geometric origin of the classical four-position attractor. Kutsukake et al.~\cite{Kutsukake2020} proved that
$
\gamma(t_n)=4$
with
$
n\ge4
$
and exhibited the set $K_n$ itself as an attractor. In our classification, the four canonical positions are not rigid: from order six onward, each can be shifted to the adjacent endpoint of its boundary in a controlled way, and reflection creates a second stable pattern. The later proofs explain exactly which endpoint choices remain possible.

\subsection{A preview of the centered-occurrence property}
The following strengthening of the boundary picture will be proved in the structural section of the paper.

\begin{theorem}[Centered-occurrence property --- preview]
\label{thm:centered-preview}
Let $n\ge4$ and let $x$ be a factor of $t_n$ with $|x|\ge2$. Then there exist $c\in K_n$ and an occurrence of $x$ whose interval contains both $c$ and $c+1$.
\end{theorem}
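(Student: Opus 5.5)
We prove the statement by induction on $n$, descending two generations at a time. This works because the four-block decomposition~\eqref{eq:tm-four-block} reproduces the boundaries of $K_{n-2}$ inside the blocks of $t_n$ at the same relative positions.

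\textbf{Base cases.} For $n=4$ and $n=5$ we check the statement directly. There are finitely many factors, and for each one we list its occurrences in $t_4$ or $t_5$ and confirm that some occurrence contains a pair $\{c,c+1\}$ with $c\in K_n$. This is a finite and routine verification.

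\textbf{Inductive step, setup.} Let $n\ge 6$ and set $N=2^{n-2}$, so that $t_n=B_1B_2B_3B_4$ with $B_1=B_4=t_{n-2}$ and $B_2=B_3=\overline{t_{n-2}}$. The key observation is that $K_n$ contains $N$, $2N$, $3N$, the internal block boundaries, and also $3N/2$. The last of these is the center of $B_2$, and $B_2$ is the complement of $t_{n-2}$.

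Now take an occurrence of $x$, of length at least $2$.

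\textbf{Case A: the occurrence crosses a block boundary.} If it crosses the boundary at $N$, $2N$ or $3N$, we are done immediately, because that boundary is in $K_n$.

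\textbf{Case B: the occurrence lies inside a single block.} Then $x$ or $\overline{x}$ is a factor of $t_{n-2}$. Both are factors of $t_{n-2}$ in fact, since $t_{n-2}$ and $\overline{t_{n-2}}$ have the same factor set for $n-2\ge 1$; this follows from the mirror identity $t_{2k}$ reversed equals $t_{2k}$, which preserves factors under complement.

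\begin{itemize}
\item If $x$ is a factor of $\overline{t_{n-2}}$, we use the center of $t_{n-2}$. The word $t_{n-2}=t_{n-3}\overline{t_{n-3}}$ has its center at $N/2$. Copying $\overline{t_{n-2}}$ into $B_2$ sends that center to $N+N/2=3N/2\in K_n$. So it suffices to show that $x$ has an occurrence in $\overline{t_{n-2}}$ crossing its center.
\item If $x$ is a factor of $t_{n-2}$, we use $B_1$, which sits at offset $0$. Its center $N/2=2^{n-3}$ is not in $K_n$. This is precisely the situation where two-generation descent is needed.
\end{itemize}

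\textbf{Inductive step, main obstacle.} In the second sub-case we apply the induction hypothesis to $t_{n-2}$ to get an occurrence through some boundary of $K_{n-2}$. We must then transport that boundary into one of the four global ones. The boundaries of $K_{n-2}$ are $N/4$, $3N/8$, $N/2$ and $3N/4$. Placed inside the blocks $B_j$ (offset $(j-1)N$, with complementation for $B_2$ and $B_3$), they must land on $K_n$. They do not land there directly: $N/4$ inside $B_1$ is $N/4$, which is not in $K_n$.

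So the transport has to be done differently. I would strengthen the induction hypothesis to the \emph{centered} form: every factor of $t_k$ of length at least $2$ occurs crossing either a block boundary of the four-block decomposition or the center of the second block. I would then show this is preserved under $\mu$. Concretely, every long factor has a unique $\mu$-phase, so $x=\mu(y)$ up to trimming one letter at each end, with $y$ a factor of $t_{n-1}$. Applying $\mu$ to $t_{n-1}$ doubles all positions, so it maps $K_{n-1}$ exactly onto $K_n$, and an occurrence of $y$ crossing $c\mid c+1$ yields an occurrence of $x$ crossing $2c\mid 2c+1$, up to a parity adjustment at the ends.

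The delicate points, which I expect to be the hardest part, are the following:
\begin{itemize}
\item factors too short to have a unique desubstitution (length at most about $4$), which must be checked directly against the four boundaries;
\item the case where trimming leaves $x$ touching the boundary $2c$ only on one side. This needs $|x|\ge 2$ together with the observation that both endpoints $2c$ and $2c+1$ lie in $\mu(y_{c})\mu(y_{c+1})$ whenever the occurrence of $y$ contains $c$ and $c+1$.
\end{itemize}

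The base cases $n=4,5$ are then the finite check above.
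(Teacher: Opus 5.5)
Your opening two-generation descent (Cases A and B) does not carry the proof and should be dropped. It also contains false claims:
\begin{itemize}
\item $t_{n-2}$ and $\overline{t_{n-2}}$ do \emph{not} have the same factor set. For instance, $t_{n-2}$ itself is not a factor of $\overline{t_{n-2}}$, and already $bbab$ occurs in $t_3$ but not in $\overline{t_3}$.
\item The target ``$x$ occurs across the center of $\overline{t_{n-2}}$'' fails in general. For example, the half $\overline{t_{n-3}}$ occurs in $\overline{t_{n-2}}$ only as a prefix.
\end{itemize}

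The argument you pivot to is, however, correct and complete. It is not a strengthening of the hypothesis; it is the statement itself, proved by induction on $n$ in steps of one via $K_n=2K_{n-1}$:
\begin{itemize}
\item Take an occurrence of $x$ with $|x|\ge3$ and extend it to aligned $\mu$-blocks. This is always possible inside $t_n$ because $|t_n|$ is even.
\item Then $x=\operatorname{trim}_{\lambda,\rho}(\mu(v))$ with $v$ a factor of $t_{n-1}$ and $|v|=(|x|+\lambda+\rho)/2\ge2$.
\item If $v$ occurs at $p\le c<c+1\le p+|v|-1$ with $c\in K_{n-1}$, the induced occurrence of $x$ runs from $2p-1+\lambda\le 2c$ to $2(p+|v|-1)-\rho\ge 2c+1$.
\end{itemize}
Only the existence of a phase is used, not its uniqueness, so lengths $3$ and $4$ need no separate check. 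For length $2$, the four pairs $t_n[c]\,t_n[c+1]$ with $c\in K_n$ are exactly $aa,ab,ba,bb$ in some order (depending on the parity of $n$). The only base case needed is $n=4$; $n=5$ is redundant.

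The paper proves the statement without induction on $n$:
\begin{itemize}
\item The lowest node of the decomposition tree containing an occurrence of length at least two has that occurrence straddling its center.
\item Center descent: the middle of a copy of $t_k$ centered at $c$ is $t_{k-3}\overline{t_{k-3}}=t_{k-2}$, again centered at $c$.
\item Starting from $t_n,t_{n-1},\overline{t_{n-1}},\overline{t_{n-2}}$, centered at $4h,2h,6h,3h$ respectively, this gives a centered copy of every block type $t_k$ ($k\le n$) and $\overline{t_k}$ ($k\le n-1$).
\item The occurrence is moved into that copy with the same relative coordinates.
\end{itemize}

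That proof is uniform from $n=4$, needs no computation, and shows which boundary serves which factor. Yours is shorter and trades the tree geometry for a finite check on $t_4$. It also exhibits the centered-occurrence property as invariant under desubstitution, in the spirit of Lemma~\ref{lem:factor-critical-descent}.
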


The difference between ``crosses the boundary'' and ``contains both endpoints'' matters. If an occurrence contains both $c$ and $c+1$, then it is hit regardless of which endpoint of the pair we select. This is precisely what allows the $\varepsilon_i\in\{0,1\}$ freedom in the two optimal families.

The proof uses a center-descent argument. Roughly, a canonical copy of $t_k$ centered at a boundary contains a canonical copy of $t_{k-2}$ centered at the same boundary. Consequently, every block type appearing in the decomposition tree has a centered representative at one of the four locations in $K_n$. An arbitrary occurrence can then be transferred, with the same relative coordinates, to that centered representative.

\subsection{Minimal unique substrings as localized constraints}
A factor $M$ of a finite word $w$ is a \emph{minimal unique substring} (MUS) if it occurs exactly once and every proper factor of $M$ occurs at least twice. Equivalently, both its longest proper prefix and its longest proper suffix are repeated.

MUSs are particularly natural for attractors. Since a MUS has a unique occurrence, every attractor must select at least one position inside that occurrence interval. In other words, every MUS supplies a necessary localized hitting constraint.

For finite Thue--Morse words, Kishi et al.~\cite{Kishi2026} obtained a closed description of all MUSs. In the indexing used here, for every $m\ge4$ there are exactly eight of them, each of length
$
2^{m-3}+2.
$
Their explicit form will be stated when we begin the occurrence-transport analysis.

Two cautions are useful from the outset. First, a MUS is unique only in its \emph{source word}. If a MUS of $t_m$ is considered as a factor of a larger word $t_n$, with $n>m$, it may have several occurrences. Its relevant constraint is then its full ambient coverage $\Cov_{t_n}(M)$. Second, the eight MUSs of the current generation alone do not capture all four-position attractors: the proof will need information transported from earlier generations.

\subsection{Three levels of constraints}
The results of the paper can now be viewed as a hierarchy of increasingly compressed hitting systems.

At the most general level, an attractor must hit the coverage of every factor. Proposition~\ref{prop:minimal-coverage-reduction} reduces this to the inclusion-minimal coverages. For $t_n$ we will show that there are exactly $8n-22$ of them.

If we additionally know that $|\Gamma|=4$, the geometry becomes much more rigid. Then the complete test can be reduced to the $24$ MUSs of three consecutive generations. Finally, the classification proof itself uses only sixteen carefully selected witnesses.

Thus the proof moves through the chain
\[
\begin{aligned}
\text{all factor coverages}
&\Longrightarrow 8n-22\text{ canonical constraints}\\
&\Longrightarrow 24\text{ three-generation MUS constraints}\\
&\Longrightarrow 16\text{ forcing witnesses}.
\end{aligned}
\]
The arrows should not be interpreted as unconditional logical equivalences: the last two reductions exploit the assumption that the candidate set has four positions. They are best read as a roadmap for how the combinatorial structure becomes progressively more rigid near optimum size.

\subsection{Notation used later}
For convenience, we collect the principal notation here.
\begin{center}
\begin{tabular}{@{}ll@{}}
\toprule
Notation & Meaning \\
\midrule
$t_n$ & finite Thue--Morse word $\mu^n(a)$ \\
$\overline{w}$ & binary complement of $w$ \\
$\Occ_w(x)$ & starting positions of occurrences of $x$ in $w$ \\
$\Cov_w(x)$ & positions covered by all occurrences of $x$ in $w$ \\
$\gamma(w)$ & minimum cardinality of a string attractor of $w$ \\
$\Acal(w)$ & family of attractors of cardinality $\gamma(w)$ \\
$\MUS(w)$ & set of minimal unique substrings of $w$ \\
$\mathcal H(w)$ & family of all factor coverages of $w$ \\
$\Hmin(w)$ & inclusion-minimal members of $\mathcal H(w)$ \\
$K_n$ & four canonical boundary positions in~\eqref{eq:canonical-boundaries} \\
\bottomrule
\end{tabular}
\end{center}

The next section turns from this geometric picture to the exact occurrence structure of the MUS hierarchy. The goal is to replace repeated large-scale calculations by a constant amount of base data plus a precise transport rule across generations.

\section{From MUS structure to occurrence coverages}
\label{sec:mus-transport}

The previous section explained why minimal unique substrings are natural localized constraints for string attractors. We now address the main technical difficulty that arises when these constraints are compared across different Thue--Morse generations.

A MUS of $t_m$ is unique in its source word, but it is usually no longer unique when viewed inside a larger word $t_{m+r}$. For the attractor problem, we therefore cannot keep only its original occurrence: we need the complete set of its occurrences in the ambient word, and hence its full coverage. Recomputing those occurrences independently at every order would hide the recursive structure that makes a uniform proof possible.

The goal of this section is to show that the relevant occurrence information can instead be transported exactly from one generation to the next. There are three ingredients. First, sufficiently long morphic images have a forced alignment with the canonical Thue--Morse blocks. Second, consecutive MUSs are not ordinary morphic images: one symbol is deleted at each end. Third, despite this trimming, occurrence starts obey a particularly simple doubling rule. Together, these facts reduce all later coverage calculations to a small table at the first few orders.

\subsection{The MUS hierarchy}
\label{subsec:mus-hierarchy}

For a word $u$, define
$
\operatorname{EXT}(u)=\{aua,aub,bua,bub\}.
$
Kishi et al.~\cite{Kishi2026} characterize the MUSs of finite Thue--Morse words in a closed form. Translated to our indexing, their theorem gives the following statement.

\begin{theorem}[Kishi et al.]
\label{thm:kishi-mus}
For every $m\ge4$,
$
\MUS(t_m)=\operatorname{EXT}(t_{m-3})\cup
\operatorname{EXT}(\overline{t_{m-3}}).
$
In particular, $t_m$ has exactly eight minimal unique substrings, each of length
$
2^{m-3}+2.
$
\end{theorem}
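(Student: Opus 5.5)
Since the statement is quoted from Kishi et al.\ and only re-indexed, one option is simply to check the translation of indices. I would instead give a self-contained argument: verify a few base cases by direct computation, then run an induction on $m$ that uses desubstitution along $\mu$. Write $k=m-3$ and $L=2^k$. Then $t_m=\mu^k(t_3)$ is the concatenation of eight canonical blocks of length $L$, following the pattern $t_3=abbabaab$:
\[
t_m=t_k\,\overline{t_k}\,\overline{t_k}\,t_k\,\overline{t_k}\,t_k\,t_k\,\overline{t_k}.
\]
In this picture the claimed MUSs are the factors $c\,u\,d$ with $u\in\{t_k,\overline{t_k}\}$ and $c,d\in\{a,b\}$, each sitting across two block boundaries.

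\textbf{Base cases.} For $m=4,5,6$ (and $m=7$ if convenient), I would list $\MUS(t_m)$ by hand or by machine. These small cases are needed because the synchronization used below fails for very short blocks. For example, $t_1=ab$ occurs at many unaligned positions.

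\textbf{Inductive step, $t_m\to t_{m+1}$.} This splits into three parts.
\begin{enumerate}
\item \emph{The candidates are MUSs.} For $c,d\in\{a,b\}$ one has
\[
\operatorname{trim}\bigl(\mu(c\,u\,d)\bigr)=\overline{c}\,\mu(u)\,d,
\qquad \mu(t_k)=t_{k+1},\qquad \mu(\overline{t_k})=\overline{t_{k+1}}.
\]
Hence trimming the $\mu$-image maps $\operatorname{EXT}(t_k)\cup\operatorname{EXT}(\overline{t_k})$ bijectively onto $\operatorname{EXT}(t_{k+1})\cup\operatorname{EXT}(\overline{t_{k+1}})$.
\item \emph{Transport of occurrences.} I would prove a synchronization lemma: once $|x|$ exceeds a small constant (length $7$ should suffice, since $aa$ and $bb$ fix the parity of the phase), every occurrence of a factor $x$ of $t_{m+1}$ has a unique parity of starting position relative to the $\mu$-blocks. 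For $x=\overline{c}\,\mu(u)\,d$, the phase is forced so that $\mu(u)$ is block-aligned. Because every $\mu$-block is $ab$ or $ba$, the letter preceding $\overline{c}$ must be $c$ and the letter following $d$ must be $\overline{d}$. This extends the occurrence to $\mu(c\,u\,d)$, giving a bijection
\[
\Occ_{t_{m+1}}\bigl(\overline{c}\,\mu(u)\,d\bigr)\;\longleftrightarrow\;\Occ_{t_m}(c\,u\,d).
\]
\item \emph{Uniqueness and minimality follow.} Uniqueness is transferred directly by this bijection. For minimality, the proper factors $\overline{c}\,\mu(u)$ and $\mu(u)\,d$ inherit at least two occurrences from the at least two occurrences of $c\,u$ and $u\,d$ in $t_m$, which hold by the inductive hypothesis.
\end{enumerate}

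\textbf{Completeness, the expected main obstacle.} It remains to show that $t_{m+1}$ has no other MUSs. Let $M$ be a MUS of $t_{m+1}$.
\begin{enumerate}
\item \emph{Short factors.} First I would show that every factor of $t_{m+1}$ of length at most $2^{k+1}+1$ occurs at least twice. To prove this, I would use the four-block decomposition~\eqref{eq:tm-four-block}, together with the fact that $t_{m-1}\overline{t_{m-1}}$ and $\overline{t_{m-1}}\,t_{m-1}$ each occur in $t_{m+1}$. These give two genuinely different placements for every window of that length. The boundary windows need care, so I would double-check this step against the base-case tables.
\item \emph{Long factors.} If $M$ is longer, synchronization lets me write $M=\alpha\,\mu(v)\,\beta$ with $|\alpha|,|\beta|\le 1$. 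Uniqueness and minimality of $M$ then force $|\alpha|=|\beta|=1$. Indeed, if an end were block-aligned, deleting that end would leave a unique factor. Once both ends are single letters, the extended desubstitution $c\,v\,d$ is a MUS of $t_m$, so by induction $v\in\{t_k,\overline{t_k}\}$.
\end{enumerate}
Counting the resulting factors gives exactly eight MUSs, each of length $2^{m-3}+2$, which completes the induction.
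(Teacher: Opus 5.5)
\textbf{Comparison with the paper.} The paper gives no proof of this statement. It imports it from Kishi et al.\ and only checks the index shift $T_i=t_{i-1}$ in Appendix~\ref{app:indexing}. Your induction is therefore a genuinely different, self-contained route, and its core is sound.

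Parts 1--3 of your inductive step are the paper's Lemma~\ref{lem:mus-lifting}, with one difference: you \emph{prove} that the trimmed images are MUSs, using uniqueness through the occurrence bijection and minimality through the repeated factors $cu$ and $ud$. The paper's proof of that lemma instead reads this off Theorem~\ref{thm:kishi-mus}.

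Your long-factor step is essentially the critical descent of Lemma~\ref{lem:factor-critical-descent}, restricted to unique factors. You should state explicitly where minimality of $cvd$ comes from: apply the same phase bijection to $M^-=\alpha\mu(v)$ and $M^+=\mu(v)\beta$, whose repeated occurrences correspond to those of $cv$ and $vd$.

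Once the step below is repaired, your argument makes the MUS classification, and hence the whole paper, independent of the external theorem, at the price of a finite base check.

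\textbf{The short-factor step fails as described.} Your justification does not supply a second occurrence for windows straddling a junction.
\begin{itemize}
\item The four-block decomposition has exactly one junction of each type $t_{m-1}|\overline{t_{m-1}}$, $\overline{t_{m-1}}|\overline{t_{m-1}}$ and $\overline{t_{m-1}}|t_{m-1}$.
\item The occurrences of $t_{m-1}\overline{t_{m-1}}$ and $\overline{t_{m-1}}t_{m-1}$ you invoke are just the two halves $t_m$ and $\overline{t_m}$, so they add nothing.
\item Refining to blocks of length $2^{m-2}$ does not help. A window of length at most $2^{m-2}+1$ straddling the junction at $2^{m-1}$ or at $3\cdot2^{m-1}$ lies inside $\overline{t_{m-2}}\,\overline{t_{m-2}}$ or $t_{m-2}t_{m-2}$, and each of these occurs only once as a pair of consecutive blocks at that scale.
\end{itemize}

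Concretely, take $y=t_{m+1}[2^{m-1}..3\cdot2^{m-2}]$, the last letter of the second block followed by the whole third block. It has length $2^{m-2}+1$ and does repeat. However, its only other occurrence starts at $11\cdot2^{m-3}$, straddling the $t_{m-2}t_{m-2}$ junction and offset by half a block from the grid. In $t_5$ this is $bbaab$ at positions $8$ and $22$. An argument that only compares aligned block placements cannot find that occurrence.

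The same half-block offset makes your informal picture of the MUSs inaccurate. In $t_4$, $baba$ at position $3$ and $abab$ at position $11$ have cores straddling a block boundary rather than filling a block. This is harmless, since you never use the picture.

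\textbf{Repair.} The fix uses only your own tools.
\begin{itemize}
\item By the induction hypothesis, every unique factor of $t_m$ contains a MUS of $t_m$, so it has length at least $2^{m-3}+2$. Hence every factor of $t_m$ of length at most $2^{m-3}+1$ occurs at least twice.
\item A window of length at most $2^{m-2}+1$ in $t_{m+1}$ extends by at most one letter on each side to an aligned occurrence of some $\mu(v)$ with $|v|\le 2^{m-3}+1$. This extension always stays inside $t_{m+1}$, because position $1$ starts a block and the last position ends one.
\item Two occurrences of $v$ in $t_m$ then give two occurrences of the window at the same offset.
\end{itemize}
No synchronization is needed for this, and it settles the step you flagged as needing care.
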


Thus every MUS has a long Thue--Morse core surrounded by one letter on each side. The core doubles in length from one generation to the next. This is already suggestive of a morphic recurrence, but one small point matters: applying $\mu$ to a MUS also doubles its two boundary letters, whereas the next MUS contains only one new boundary letter at each end. The correct recurrence is therefore a \emph{trimmed} morphic image.

For a word $z$ with $|z|\ge3$, write
$
\operatorname{trim}(z)=z[2..|z|-1].
$
For every $m\ge4$, order the eight members of $\MUS(t_m)$ by the starting positions of their unique occurrences in $t_m$, and denote them by
$
M_1^{(m)},\ldots,M_8^{(m)}.
$
This ordering will be preserved by the recurrence below.

\subsection{Synchronization: why the morphic phase is forced}
\label{subsec:synchronization}

The Thue--Morse morphism has constant length two. Consequently, the word $t_N=\mu(t_{N-1})$ comes with a canonical partition into two-letter image blocks, each equal to $ab$ or $ba$. The first question is whether an occurrence of a word of the form $\mu(U)$ could be shifted by one position and still look like a valid morphic image.

For $|U|\ge2$, the answer is no. The reason is local. A shifted copy of two consecutive image blocks would cross two consecutive canonical block boundaries. For both crossing pairs to be valid Thue--Morse image blocks, three consecutive preimage letters would have to be equal, but the Thue--Morse word contains neither $aaa$ nor $bbb$.

\begin{lemma}[Synchronization]
\label{lem:synchronization}
Let $U$ be a binary word with $|U|\ge2$. Every occurrence of $\mu(U)$ in a finite Thue--Morse word starts at an odd position.
\end{lemma}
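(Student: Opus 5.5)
We argue by contradiction, following the local reasoning sketched just before the statement. Fix a finite Thue--Morse word $t_N$ and an occurrence of $\mu(U)$ in $t_N$ starting at an even position $p$. If $N=0$, then $|t_N|=1<4\le|\mu(U)|$, so there is nothing to prove; assume $N\ge1$. Then $t_N=\mu(t_{N-1})$, and $t_N$ is partitioned into the canonical two-letter blocks $t_N[2j-1..2j]=\mu(t_{N-1}[j])\in\{ab,ba\}$ for $1\le j\le 2^{N-1}$. Since $|U|\ge2$, the word $\mu(U)$ begins with two consecutive image blocks $\mu(U[1])\mu(U[2])$, each of the form $xy$ with $y=\overline{x}$.

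Because $p$ is even, the first two image blocks of the occurrence occupy positions $p,p+1$ and $p+2,p+3$. Write $p=2j$. Then positions $p$ and $p+1$ lie in the canonical blocks $j$ and $j+1$ respectively: $t_N[p]$ is the second letter of $\mu(t_{N-1}[j])$, which is $\overline{t_{N-1}[j]}$, and $t_N[p+1]$ is the first letter of $\mu(t_{N-1}[j+1])$, which is $t_{N-1}[j+1]$. The requirement that $t_N[p..p+1]$ be an image block, i.e.\ that its two letters differ, gives $\overline{t_{N-1}[j]}\neq t_{N-1}[j+1]$, hence $t_{N-1}[j]=t_{N-1}[j+1]$. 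The same computation for positions $p+2,p+3$, which straddle blocks $j+1$ and $j+2$, gives $t_{N-1}[j+1]=t_{N-1}[j+2]$. Therefore $t_{N-1}$ contains the cube $ccc$ of a single letter at positions $j,j+1,j+2$. All these indices lie in $[1,2^{N-1}]$, because the occurrence ends at $p+3\le 2^N$, so $2j+4\le 2^N+1$ and thus $j+2\le 2^{N-1}$.

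The final step is to rule out $aaa$ and $bbb$ as factors of any $t_{N-1}$. I would prove this directly from $t_{M}=\mu(t_{M-1})$ for $M\ge1$. Every factor of length three overlaps some image block $ab$ or $ba$ in two positions, and that block contains both letters, so the factor cannot be a letter cube. For the boundary cases $t_0=a$ and $t_1=ab$ there is nothing to check, since these words have length less than $3$.

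The only delicate point is the index bookkeeping. One must check that the two straddling pairs really exist inside $t_N$, which uses $|U|\ge2$ and $p+3\le|t_N|$. One must also handle small $N$ uniformly. Neither step should be a serious obstacle, so the contradiction shows that every occurrence of $\mu(U)$ starts at an odd position.
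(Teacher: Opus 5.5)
Your proof is correct and follows the paper's argument essentially step for step. An even start forces the two straddling pairs $\overline{z_j}z_{j+1}$ and $\overline{z_{j+1}}z_{j+2}$ to be image blocks, which yields a letter cube in $t_{N-1}$; this is excluded because every three consecutive positions contain a complete canonical block. Your explicit index check $j+2\le 2^{N-1}$ and your handling of $N=0$ are somewhat more careful than the paper's brief remark about boundary occurrences.
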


\begin{proof}
First observe that no finite Thue--Morse word contains $aaa$ or $bbb$. Indeed, every interval of three consecutive positions contains a complete canonical image block, and every such block is either $ab$ or $ba$.

Suppose for a contradiction that an occurrence of $\mu(U)$ in $t_N=\mu(t_{N-1})$ starts at an even position $2j$. Because $|U|\ge2$, the first two length-two blocks of this occurrence straddle two consecutive boundaries between canonical images. Let $z_j,z_{j+1},z_{j+2}$ be the corresponding three consecutive preimage letters.

The pair across the first canonical boundary is
$
\overline{z_j}\,z_{j+1},
$
where $\overline{z_j}$ denotes the complement of $z_j$. In order for this pair to equal one of the valid image blocks $ab$ or $ba$, its two letters must differ; over the binary alphabet this is equivalent to $z_j=z_{j+1}$. The next crossing pair similarly forces $z_{j+1}=z_{j+2}$. Hence the preimage would contain three consecutive equal letters, a contradiction.

The same argument covers occurrences adjacent to the ends of the finite word: an even start is never the first position, and an occurrence of length at least four contains the two crossing pairs needed above.
\end{proof}

\paragraph{Why this lemma matters.}
Synchronization turns a local occurrence in the large word into an unambiguous occurrence in the preimage. Once the phase is known, aligned pairs can be decoded uniquely. This is the mechanism that will rule out ``extra'' occurrences of a trimmed MUS that do not come from the previous generation.

\subsection{The trimmed MUS recurrence}
\label{subsec:trimmed-lifting}

We can now make the recurrence between consecutive MUS generations exact. The statement has two parts: a factor identity and an occurrence identity. The factor identity says which MUS appears at the next order; the occurrence identity says that this relation remains valid even when both factors are viewed inside later ambient words.

\begin{lemma}[MUS lifting]
\label{lem:mus-lifting}
For every $m\ge4$ and $1\le i\le8$,
\begin{equation}
M_i^{(m+1)}=\operatorname{trim}\!\left(\mu(M_i^{(m)})\right).
\label{eq:mus-trim-recurrence}
\end{equation}
Moreover, for every $r\ge0$,
\begin{equation}
\Occ_{t_{m+r+1}}\!\left(M_i^{(m+1)}\right)
=2\,\Occ_{t_{m+r}}\!\left(M_i^{(m)}\right),
\label{eq:occ-doubling}
\end{equation}
where $2S=\{2p:p\in S\}$.
\end{lemma}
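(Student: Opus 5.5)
The factor identity \eqref{eq:mus-trim-recurrence} is essentially a word computation, so I will do it first.

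\textbf{Factor identity.} By Theorem~\ref{thm:kishi-mus}, every member of $\MUS(t_m)$ has the form $xuy$ with $u\in\{t_{m-3},\overline{t_{m-3}}\}$ and $x,y\in\{a,b\}$. Then
\[
\mu(xuy)=x\,\overline{x}\,\mu(u)\,y\,\overline{y},
\qquad
\operatorname{trim}(\mu(xuy))=\overline{x}\,\mu(u)\,y .
\]
Since $\mu(t_{m-3})=t_{m-2}$ and $\mu(\overline{t_{m-3}})=\overline{t_{m-2}}$, the map $xuy\mapsto\overline{x}\,\mu(u)\,y$ is a bijection from $\MUS(t_m)$ onto $\MUS(t_{m+1})$. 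It therefore remains to check that this bijection respects the ordering by starting positions. That will follow from the occurrence identity with $r=0$: the map $p\mapsto 2p$ is increasing, and each side is a singleton. So I prove \eqref{eq:occ-doubling} for $\operatorname{trim}(\mu(M))$ with $M$ an arbitrary MUS, and deduce the indexing afterwards.

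\textbf{Inclusion $\supseteq$.} Fix $N=m+r$ and let $p\in\Occ_{t_N}(M)$. Since $t_{N+1}=\mu(t_N)$, the image $\mu(M)$ occurs in $t_{N+1}$ starting at $2p-1$. Deleting its first symbol, the trimmed word starts at $2p$. The last symbol dropped lies inside the occurrence, so there is no boundary issue.

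\textbf{Inclusion $\subseteq$ (the main step).} Let $M=xuy$ and $M'=\overline{x}\,\mu(u)\,y$, and suppose $M'$ occurs at position $q$ in $t_{N+1}$. Since $|u|\ge2$, $\mu(u)$ occurs at $q+1$, so by Lemma~\ref{lem:synchronization} $q+1$ is odd and $q=2p$ is even.

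The letter at $q$ is $\overline{x}$, and it is the second letter of the aligned image block at positions $2p-1,2p$. Hence that block is $\mu(x)$, so $t_N[p]=x$. Aligned decoding of the $\mu(u)$ part gives $t_N[p+1..p+|u|]=u$.

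The letter $y$ at position $q+1+2|u|$ is the first letter of the next aligned block. That block is therefore $\mu(y)$, so $t_N[p+|u|+1]=y$; this position exists because that block position exists in $t_{N+1}$. Hence $M$ occurs at $p$ in $t_N$.

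\textbf{Expected obstacle.} The delicate point is the synchronization application. I must check that $|u|\ge2$ (here $|u|=2^{m-3}\ge2$), and I must handle occurrences touching the ends of $t_{N+1}$. For the ends: $q\ge2$ because $q$ is even and positive, and the trailing aligned block fits inside $t_{N+1}$ because $|t_{N+1}|$ is even and position $q+1+2|u|$ is odd. With these checks the phase is forced and the decoding is unique, which completes the doubling rule and, as explained above, the preservation of the index $i$.
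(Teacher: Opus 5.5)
Your proof is correct and follows the paper's argument essentially step for step. Both compute $\operatorname{trim}(\mu(xuy))=\overline{x}\,\mu(u)\,y$ from Kishi et al.'s form, get the forward inclusion from the morphic image starting at $2p-1$, and get the reverse inclusion by applying synchronization to the $\mu(u)$ core (using $|u|\ge2$) and decoding the two flanking letters from the adjacent aligned blocks, with the same end-of-word checks; the only cosmetic difference is that you read off the index preservation from the $r=0$ case of the occurrence identity, whereas the paper states it directly before proving that identity.
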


\begin{proof}
By Theorem~\ref{thm:kishi-mus}, write
$
M_i^{(m)}=xWy,
$
where $x,y\in\{a,b\}$ and $W$ is either $t_{m-3}$ or $\overline{t_{m-3}}$. Since
\[
\mu(x)=x\overline{x},\qquad \mu(y)=y\overline{y},
\]
we have
$
\mu(xWy)=x\overline{x}\,\mu(W)\,y\overline{y},
$
and therefore
$
\operatorname{trim}(\mu(xWy))=\overline{x}\,\mu(W)\,y.
$
The middle word $\mu(W)$ is $t_{m-2}$ or its complement, so the trimmed image is one of the eight MUSs of $t_{m+1}$. If the unique occurrence of $M_i^{(m)}$ starts at $p$ in $t_m$, then its morphic image starts at $2p-1$ and the trimmed factor starts at $2p$. Hence the order of the eight unique starts is preserved, proving~\eqref{eq:mus-trim-recurrence} with the same index $i$.

The same observation gives one inclusion in~\eqref{eq:occ-doubling} in every larger ambient word. If $xWy$ starts at $p$ in $t_{m+r}$, then $\mu(xWy)$ starts at $2p-1$ in $t_{m+r+1}$, so $\overline{x}\mu(W)y$ starts at $2p$. Thus
\[
2\,\Occ_{t_{m+r}}(M_i^{(m)})
\subseteq
\Occ_{t_{m+r+1}}(M_i^{(m+1)}).
\]

For the converse, suppose that $\overline{x}\mu(W)y$ starts at $q$ in $t_{m+r+1}$, and put $L=|W|$. Then $\mu(W)$ starts at $q+1$. Since $L\ge2$, Lemma~\ref{lem:synchronization} implies that $q+1$ is odd, so $q=2p$ for some $p$. The aligned occurrence of $\mu(W)$ therefore decodes to an occurrence of $W$ beginning at $p+1$ in $t_{m+r}$.

The symbol at position $2p$ is the second symbol of the image of the preceding preimage letter. Since that symbol is $\overline{x}$, the preceding preimage letter is $x$. Similarly, the final symbol $y$ occurs at the first position of the image of the following preimage letter, so that following letter is $y$. Hence $xWy=M_i^{(m)}$ starts at $p$, proving the reverse inclusion.

No boundary occurrence is lost. In the converse direction $q$ is even and hence at least two, so the required preceding image block exists; the final symbol exists by the assumed occurrence and lies at the first position of a canonical block. In the forward direction trimming removes one symbol from each end of an already valid morphic image, so the resulting occurrence remains inside the finite ambient word.
\end{proof}

\subsection{A concrete example of the lifting rule}
\label{subsec:lifting-example}

It is useful to see the recurrence once before using it symbolically. The first order-four MUS in our ordering is
$
M_1^{(4)}=bbab,
$
whose unique occurrence in $t_4$ starts at position $2$. Applying the morphism gives
$
\mu(bbab)=babaabba,
$
and trimming the first and last symbols yields
$
M_1^{(5)}=abaabb.
$
The start position doubles from $2$ to $4$. More importantly, the same doubling rule applies to \emph{every} occurrence when these factors are embedded in later words. Thus an occurrence of $bbab$ beginning at, say, $14$ in a larger ambient generation corresponds to an occurrence of $abaabb$ beginning at $28$ in the next generation.

This distinction between ``unique in the source word'' and ``possibly repeated in the ambient word'' is central. The lifting lemma does not merely transport the unique defining occurrence of a MUS; it transports the entire ambient occurrence set.

\subsection{Finite base data}
\label{subsec:base-data}

The lifting rule means that only a small amount of explicit data is needed. At order four the eight MUSs and their unique starts are fixed. To compute the coverages used in the stable characterization, we also need to know where these same order-four factors occur in $t_5$ and $t_6$.

\begin{table}[ht]
\centering
\caption{The MUSs of $t_4$, ordered by their unique occurrence, and the starting positions of the same factors in $t_4,t_5,t_6$.}
\label{tab:base-occurrences}
\begin{tabular}{@{}cclll@{}}
\toprule
$i$ & $M_i^{(4)}$ & $\Occ_{t_4}$ & $\Occ_{t_5}$ & $\Occ_{t_6}$ \\
\midrule
1 & $bbab$ & $2$  & $2,14,26$ & $2,14,26,42,50$ \\
2 & $baba$ & $3$  & $3,15,27$ & $3,15,27,43,51$ \\
3 & $abaa$ & $4$  & $4,16,28$ & $4,16,28,44,52$ \\
4 & $aabb$ & $6$  & $6,24$    & $6,24,30,40,54$ \\
5 & $bbaa$ & $8$  & $8,22$    & $8,22,32,38,56$ \\
6 & $aaba$ & $10$ & $10,18$   & $10,18,34,46,58$ \\
7 & $abab$ & $11$ & $11,19$   & $11,19,35,47,59$ \\
8 & $babb$ & $12$ & $12,20$   & $12,20,36,48,60$ \\
\bottomrule
\end{tabular}
\end{table}

The table should be read horizontally. For example, $M_1^{(4)}=bbab$ is unique in $t_4$ but already has three occurrences in $t_5$ and five in $t_6$. Those additional occurrences are precisely why an old MUS cannot be represented, in a later ambient word, only by its original interval.

The table is constant-size. Once it is known, Lemma~\ref{lem:mus-lifting} scales it to arbitrary order.

\subsection{From occurrence starts to coverage formulas}
\label{subsec:from-starts-to-coverages}

Repeated application of~\eqref{eq:occ-doubling} gives the exact start sets needed later. For $d\in\{0,1,2\}$ and $n\ge4+d$,
\begin{equation}
\Occ_{t_n}\!\left(M_i^{(n-d)}\right)
=
2^{\,n-4-d}\,
\Occ_{t_{4+d}}\!\left(M_i^{(4)}\right).
\label{eq:scaled-occurrences}
\end{equation}
Here multiplication of a set by a power of two means scaling every starting position by that factor.

The three values of $d$ correspond exactly to the three source generations that will appear in the stable witness system:
\[
M_i^{(n)},\qquad M_i^{(n-1)},\qquad M_i^{(n-2)}.
\]
Their lengths are
\[
2^{n-3}+2,\qquad 2^{n-4}+2,\qquad 2^{n-5}+2,
\]
respectively. Combining these lengths with the scaled start sets in~\eqref{eq:scaled-occurrences} immediately determines every component of their ambient coverage in $t_n$.

This is the point at which the problem changes character. Up to now we have studied factors and morphic alignment. From the next section onward, the relevant information can be represented almost entirely as unions of explicit integer intervals. The classification of the smallest attractors will then become a small transversal problem on those intervals.

\subsection{Relation to occurrence-preserving morphisms}
\label{subsec:relation-opm}

The occurrence identity above is deliberately specialized. Kishi et al.~\cite{Kishi2026} prove a general occurrence correspondence for ordinary morphic images under their occurrence-preserving framework. Lemma~\ref{lem:mus-lifting} is not intended as a generalization of that theory. Its role is narrower: the factors required by the present attractor problem are the actual consecutive-generation MUSs, and these are trimmed images rather than ordinary images.

The direct synchronization argument supplies exactly the additional information needed to handle the one-symbol deletion at both ends and to show that no ambient occurrence is created or lost. Once this is established, the entire later coverage analysis follows from the finite base data in Table~\ref{tab:base-occurrences}.

\paragraph{Roadmap.}
The next part of the extended version uses these transported coverages to prove the stable characterization. We will first display the current-, previous-, and second-previous-generation coverage intervals in a common scale, then explain geometrically why sixteen selected constraints force any four-position hitting set into one of two configurations. Only after that forcing argument will we return to the centered-boundary geometry to prove that every set in those two families is indeed an attractor.

\section{Stable characterization of the smallest attractors}
\label{sec:stable-characterization}

We now turn from occurrence transport to the classification problem itself.  The previous section reduced the relevant information to explicit coverage intervals.  The central observation of this section is that, once all intervals are written on the same scale, the proof becomes a small geometric hitting-set argument.

Throughout the section let $n\ge6$ and put
$
 h=2^{n-3},
$
so that $|t_n|=8h$ and $h\ge8$.  The scale $h$ is convenient because the four canonical boundaries of $t_n$ occur at
\[
 2h,\qquad 3h,\qquad 4h,\qquad 6h.
\]
At this resolution the final two families of smallest attractors are
\[
\FL_n=
\bigl\{\{2h+\varepsilon_1,3h+\varepsilon_2,4h+\varepsilon_3,6h+\varepsilon_4\}:
\varepsilon_1,\ldots,\varepsilon_4\in\{0,1\}\bigr\}
\]
and
\[
\FR_n=
\bigl\{\{2h+\varepsilon_1,4h+\varepsilon_2,5h+\varepsilon_3,6h+\varepsilon_4\}:
\varepsilon_1,\ldots,\varepsilon_4\in\{0,1\}\bigr\}.
\]
The purpose of the next subsections is to explain first why every four-position attractor is forced into one of these two shapes, and then why every such set is indeed an attractor.

\subsection{Three generations of coverage constraints}
\label{subsec:ABC-coverages}

For each $1\le i\le8$, define
\[
 A_i=\Cov_{t_n}(M_i^{(n)}),\qquad
 B_i=\Cov_{t_n}(M_i^{(n-1)}),\qquad
 C_i=\Cov_{t_n}(M_i^{(n-2)}).
\]
The superscript records the \emph{source generation} of the MUS.  Thus $M_i^{(n-1)}$ is not a MUS of the ambient word $t_n$; it is simply a factor of $t_n$, typically with several occurrences, and $B_i$ is the union of all those occurrence intervals.  The same convention applies to $C_i$.

Equation~\eqref{eq:scaled-occurrences} and Table~\ref{tab:base-occurrences} give the current-generation coverages
\begin{align*}
A_1&=[h,2h+1], & A_2&=[3h/2,5h/2+1],\\
A_3&=[2h,3h+1], & A_4&=[3h,4h+1],\\
A_5&=[4h,5h+1], & A_6&=[5h,6h+1],\\
A_7&=[11h/2,13h/2+1], & A_8&=[6h,7h+1].
\end{align*}
These are single intervals because the corresponding factors are MUSs of the ambient word $t_n$ and therefore occur once.

The proof also uses the following previous-generation coverages:
\begin{align*}
B_1&=[h/2,h+1]\cup[7h/2,4h+1]\cup[13h/2,7h+1],\\
B_2&=[3h/4,5h/4+1]\cup[15h/4,17h/4+1]\cup[27h/4,29h/4+1],\\
B_3&=[h,3h/2+1]\cup[4h,9h/2+1]\cup[7h,15h/2+1],\\
B_6&=[5h/2,3h+1]\cup[9h/2,5h+1],\\
B_7&=[11h/4,13h/4+1]\cup[19h/4,21h/4+1],\\
B_8&=[3h,7h/2+1]\cup[5h,11h/2+1],
\end{align*}
and two second-previous-generation coverages:
\begin{align*}
C_3={}&[h/2,3h/4+1]\cup[2h,9h/4+1]\cup[7h/2,15h/4+1]\\
&\cup[11h/2,23h/4+1]\cup[13h/2,27h/4+1],\\
C_6={}&[5h/4,3h/2+1]\cup[9h/4,5h/2+1]\cup[17h/4,9h/2+1]\\
&\cup[23h/4,6h+1]\cup[29h/4,15h/2+1].
\end{align*}

At first sight these formulas may look ad hoc.  Their role is in fact highly structured.  The $A_i$ determine broad regions in which the four attractor positions must lie.  The $B_i$ refine those regions and distinguish two global arrangements.  Finally, $C_3$ and $C_6$ fix the remaining endpoint choices.  The whole forcing argument therefore needs only sixteen factors.

Define
\[
\mathcal W_n=
\{M_i^{(n)}:1\le i\le8\}
\cup
\{M_i^{(n-1)}:i\in\{1,2,3,6,7,8\}\}
\cup
\{M_3^{(n-2)},M_6^{(n-2)}\}.
\]
Thus $|\mathcal W_n|=8+6+2=16$.

\subsection{Geometric forcing by sixteen witnesses}
\label{subsec:sixteen-witness}

Before giving the formal proof, it is useful to isolate its geometric backbone.  Four of the witness coverages,
\[
 A_2,\qquad B_7,\qquad B_2,\qquad A_7,
\]
are pairwise disjoint.  Hence a four-position set hitting all witnesses has no freedom to ``spend'' two positions in the same one of these regions: it must place exactly one position in each.  Once the four positions have been named according to those regions, the remaining coverage constraints can only be satisfied in two ways.  These two ways become the left and right families $\FL_n$ and $\FR_n$.

\begin{lemma}[Sixteen-witness lemma]
\label{lem:sixteen-witness}
Let $n\ge6$ and let $\Gamma\subseteq[1,2^n]$ have cardinality four.  If
\[
 \Gamma\cap\Cov_{t_n}(x)\ne\varnothing
 \qquad\text{for every }x\in\mathcal W_n,
\]
then
$
 \Gamma\in\FL_n\cup\FR_n.
$
\end{lemma}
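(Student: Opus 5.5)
The plan is a pure interval-transversal argument on the explicit coverages listed in Subsection~\ref{subsec:ABC-coverages}, which I take as given from \eqref{eq:scaled-occurrences} and Table~\ref{tab:base-occurrences}. Throughout I will use $h\ge 8$. This guarantees that all the ``$+1$'' endpoint corrections are smaller than the gaps of length at least $h/4\ge 2$ between pieces, so comparisons between intervals reduce to comparisons of the multiples of $h/4$.

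\emph{Step 1: one position per region.} First I check that $A_2=[3h/2,5h/2+1]$, $B_7$, $B_2$ and $A_7=[11h/2,13h/2+1]$ are pairwise disjoint. Since they are disjoint and $|\Gamma|=4$, I can write $\Gamma=\{p,q,r,s\}$ with $p\in A_2$, $q\in B_7$, $r\in B_2$ and $s\in A_7$. Every further constraint is then a condition on these four named points.

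\emph{Step 2: split on $q$ and determine $r$ with $A_4,A_5$.} There are two cases, according to whether $q$ lies in the left piece $[11h/4,13h/4+1]$ or the right piece $[19h/4,21h/4+1]$ of $B_7$. Suppose $q$ is in the left piece. Then $A_5=[4h,5h+1]$ misses $p$, $q$ and $s$, so $r\in[4h,17h/4+1]$, the middle piece of $B_2$.
\begin{itemize}[leftmargin=*]
\item $A_1=[h,2h+1]$ now cannot be hit by $r$, so $p\le 2h+1$.
\item $A_3=[2h,3h+1]$ is hit by $p$ or $q$.
\item $A_8=[6h,7h+1]$ must be hit by $s$, so $s\ge 6h$.
\item $A_6=[5h,6h+1]$ must also be hit by $s$, so $s\le 6h+1$. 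Hence $s\in\{6h,6h+1\}$.
\end{itemize}
Next, $B_6=[5h/2,3h+1]\cup[9h/2,5h+1]$ misses $r$ and $s$, and it misses $p$ unless $p\ge 5h/2$, which is already excluded. So $q\le 3h+1$. Then $B_8=[3h,7h/2+1]\cup[5h,11h/2+1]$ forces $q\ge 3h$, because $p$, $r$ and $s$ all avoid it. Hence $q\in\{3h,3h+1\}$.

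\emph{Step 3: finish the left case.} I pin $r$ using $B_1$, whose piece $[7h/2,4h+1]$ is the only one reachable. This gives $r\le 4h+1$, so $r\in\{4h,4h+1\}$. With $q\ge 3h$, the constraint $A_3$ no longer needs $p$. To get $p\ge 2h$, I use $C_3$, whose piece $[2h,9h/4+1]$ is the only one reachable by $p$ once $q$, $r$ and $s$ are fixed. So $p\in\{2h,2h+1\}$, and $\Gamma\in\FL_n$. I expect $C_6$ to play the analogous pinning role in the right case. If some endpoint survives in either case, I will test the remaining unused witnesses ($B_3$, $A_1$, $A_8$) against it.

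\emph{Right case and main obstacle.} The right case is the mirror image. Rather than redo it, I will check that the reflection $j\mapsto 8h+1-j$ maps the witness coverages to witness coverages. This holds because $t_n$ reversed equals $t_n$ or its complement, and complementation preserves coverages. The reflection swaps $A_i\leftrightarrow A_{9-i}$, $B_2\leftrightarrow B_2$, $B_7\leftrightarrow B_7$, and also $C_3\leftrightarrow C_6$, which explains why exactly these two second-previous-generation witnesses appear. It maps $\FL_n$ onto $\FR_n$. If this symmetry check turns out messy, I will simply repeat Steps~2--3 directly with $A_4$ in place of $A_5$. The main obstacle is the endpoint bookkeeping: confirming for each constraint that the ``other'' points are genuinely excluded. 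This is where the pieces of width $h/4$ and the bound $h\ge8$ must be used carefully.
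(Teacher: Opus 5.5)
Your argument is correct. Your left case is essentially the paper's left case: the same witnesses $A_1,A_5,A_6,A_8,B_1,B_6,B_8,C_3$ pin the four points in the same way. The only organisational difference there is that you obtain $r\in[4h,17h/4+1]$ from $A_5$ inside the case. The paper instead shows $r\in B_2^M$ once for both cases, by noting that otherwise the $B_7$-point would have to lie in $A_4\cap A_5=\{4h,4h+1\}$.

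The genuine difference is the right case. The paper redoes the case analysis directly, using $A_1\cap A_3$, then $A_4\cap B_3$, then $A_8$, then $B_6\cap B_8$, and finally $C_6$. You instead reduce the right case to the left one via the reflection $j\mapsto 8h+1-j$. That reduction is valid, but the justification you give ($t_n^R\in\{t_n,\overline{t_n}\}$, complementation preserves coverages) only shows that a reflected coverage is the coverage of \emph{some} factor. What you actually need is that the sixteen witness coverages are permuted among themselves.

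This invariance does hold, but besides the swaps you list it requires $B_1\leftrightarrow B_3$ and $B_6\leftrightarrow B_8$. These are exactly witnesses your left case relies on, so they must be stated. They can be read off from the displayed formulas: for instance, the pieces of $B_1$ reflect to $[7h,15h/2+1]$, $[4h,9h/2+1]$ and $[h,3h/2+1]$, which are the pieces of $B_3$. Structurally, they follow from the closure of each $\MUS(t_m)$ under reversal and complementation.

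What each route buys: yours halves the case work and explains why $\mathcal W_n$ contains the mirrored pairs $C_3/C_6$, $B_1/B_3$ and $B_6/B_8$. It parallels the paper's own use of reflection for sufficiency in Lemma~\ref{lem:reflection-family}. The paper's direct treatment avoids having to verify this invariance and stays entirely within the explicit interval bookkeeping.
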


\begin{proof}
Write $B_2^L,B_2^M,B_2^R$ for the three components of $B_2$, and $B_7^L,B_7^R$ for the two components of $B_7$.  Direct comparison of endpoints gives
\[
 B_2^L<A_2<B_7^L<B_2^M<B_7^R<A_7<B_2^R,
\]
where $X<Y$ means that every position of $X$ precedes every position of $Y$.  The smallest gap between consecutive sets in this display is $h/4-1>0$.  Hence $A_2,B_7,B_2,A_7$ are pairwise disjoint.

Since $\Gamma$ has exactly four positions and hits all four sets, every position of $\Gamma$ belongs to their union, one position to each set.  We may therefore name them uniquely as
\[
 p\in A_2,\qquad s\in B_7,\qquad r\in B_2,\qquad q\in A_7.
\]
The points $p$ and $q$ lie respectively before $A_4$ and after $A_5$.  If $r$ belonged to $B_2^L$ or $B_2^R$, then it would hit neither $A_4$ nor $A_5$.  The single point $s$ would then have to hit both $A_4$ and $A_5$, hence would have to belong to
$
 A_4\cap A_5=\{4h,4h+1\},
$
which is disjoint from $B_7$.  Therefore
$
 r\in B_2^M=[15h/4,17h/4+1].
$
We now distinguish the two components of $B_7$.

\smallskip
\noindent\emph{Left case: $s\in B_7^L$.}
Only $p$ can hit $A_1$, and only $r$ can hit $A_5$, so
\[
 p\le2h+1,\qquad r\ge4h.
\]
Only $q$ can hit either $A_6$ or $A_8$, and therefore it must hit both:
$
 q\in A_6\cap A_8=\{6h,6h+1\}.
$
Under the restrictions already obtained, only $r$ can hit $B_1$, and
$
 B_1\cap B_2^M\cap[4h,\infty)=\{4h,4h+1\}.
$
Thus $r\in\{4h,4h+1\}$.  Next, only $s$ can hit both $B_6$ and $B_8$, giving
$
 s\in B_7^L\cap B_6\cap B_8=\{3h,3h+1\}.
$
Finally, none of $q,r,s$ hits $C_3$, so $p$ must hit it.  Hence
$
 p\in A_2\cap(-\infty,2h+1]\cap C_3=\{2h,2h+1\}.
$
Therefore $\Gamma\in\FL_n$.

\smallskip
\noindent\emph{Right case: $s\in B_7^R$.}
Only $p$ can hit $A_1$ or $A_3$, so it must hit both:
$
 p\in A_1\cap A_3=\{2h,2h+1\}.
$
Only $r$ can hit $A_4$, and after this restriction only $r$ can also hit $B_3$.  Thus
$
 r\in B_2^M\cap A_4\cap B_3=\{4h,4h+1\}.
$
Only $q$ can hit $A_8$, so $q\ge6h$.  The coverages $B_6$ and $B_8$ must both be hit by $s$, yielding
$
 s\in B_7^R\cap B_6\cap B_8=\{5h,5h+1\}.
$
None of $p,r,s$ hits $C_6$, hence $q$ must.  Together with $q\in A_7$ and $q\ge6h$, this gives
$
 q\in A_7\cap[6h,\infty)\cap C_6=\{6h,6h+1\}.
$
Therefore $\Gamma\in\FR_n$.
\end{proof}

\paragraph{An internal lower bound.}
The four pairwise disjoint factor coverages $A_2,B_7,B_2,A_7$ already show that every attractor of $t_n$ has at least four positions when $n\ge6$.  Thus the stable regime contains its own lower-bound proof:
$
 \gamma(t_n)\ge4.
$
Combined with the sufficiency argument below, this independently recovers $\gamma(t_n)=4$ for all $n\ge6$.

\subsection{Why the two forced families are attractors}
\label{subsec:sufficiency-stable}

The witness lemma proves necessity: any four-position set satisfying the factor constraints must have one of two shapes.  To finish the classification we need sufficiency.  This is where the centered-occurrence property from Section~\ref{sec:preliminaries} becomes useful.

A boundary at $c$ means the adjacent pair $c\mid c+1$.  Recall that every factor of length at least two has an occurrence containing both endpoints of one of the four canonical boundaries
\[
 2h\mid2h+1,\qquad
 3h\mid3h+1,\qquad
 4h\mid4h+1,\qquad
 6h\mid6h+1.
\]
Therefore a set choosing one endpoint from each of these four boundaries automatically attracts every factor of length at least two.

\begin{lemma}[Paired boundaries]
\label{lem:paired-boundaries}
Every member of $\FL_n$ is a string attractor of $t_n$ for $n\ge6$.
\end{lemma}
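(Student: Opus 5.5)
The plan is to split the factors of $t_n$ into those of length at least two and the two one-letter factors $a$ and $b$. These two classes are handled by completely different arguments.

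\emph{Factors of length at least two.} Fix $\Gamma=\{2h+\varepsilon_1,3h+\varepsilon_2,4h+\varepsilon_3,6h+\varepsilon_4\}\in\FL_n$. With $h=2^{n-3}$ we have
\[
K_n=\{2^{n-2},3\cdot2^{n-3},2^{n-1},3\cdot2^{n-2}\}=\{2h,3h,4h,6h\}.
\]
Let $x$ be a factor with $|x|\ge2$. By the centered-occurrence property (Theorem~\ref{thm:centered-preview}), some $c\in K_n$ and some occurrence of $x$ satisfy $\{c,c+1\}\subseteq[p,p+|x|-1]$. Each $c\in K_n$ equals $jh$ for some $j\in\{2,3,4,6\}$, and $\Gamma$ contains $jh+\varepsilon$ for some $\varepsilon\in\{0,1\}$. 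That point lies in $\{c,c+1\}$, so it hits the occurrence interval. Hence $\Gamma\cap\Cov_{t_n}(x)\ne\varnothing$. This step is immediate once the preview theorem is granted. The real weight of this step therefore rests on that theorem, which the paper has promised to prove in the structural section.

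\emph{One-letter factors.} We must show that $\Gamma$ contains a position carrying $a$ and a position carrying $b$. The idea is to use the two-point boundary at $3h$. Write $t_n=t_{n-2}\,\overline{t_{n-2}}\,\overline{t_{n-2}}\,t_{n-2}$, with blocks of length $2h$.
\begin{itemize}
\item Position $3h$ is the last letter of the left half of the second block $\overline{t_{n-2}}$, namely the last letter of $\overline{t_{n-3}}$.
\item Position $3h+1$ is the first letter of the right half of that block, namely the first letter of $t_{n-3}$.
\end{itemize}
Both letters are known explicitly. The first letter of $t_{n-3}$ is $a$. The last letter of $t_k$ is $a$ when $k$ is even and $b$ when $k$ is odd, and complementing swaps this. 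So positions $3h$ and $3h+1$ carry fixed letters, but $\varepsilon_2$ selects only one of them. The argument must therefore combine letters across the boundaries.

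The concrete step is to compute the letter at each of the eight candidate positions $jh$ and $jh+1$ for $j\in\{2,3,4,6\}$:
\begin{itemize}
\item $t_n[2h]$ is the last letter of $t_{n-2}$, and $t_n[2h+1]=b$.
\item $t_n[4h]$ is the last letter of $\overline{t_{n-1}}$, and $t_n[4h+1]=b$.
\item $t_n[6h]$ is the last letter of $\overline{t_{n-2}}$, and $t_n[6h+1]$ is the first letter of $t_{n-2}$, which is $a$.
\item $t_n[3h]$ and $t_n[3h+1]$ are as computed above.
\end{itemize}
Then we check, by the parity of $n$, that every choice of $(\varepsilon_1,\dots,\varepsilon_4)$ yields both letters. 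The likely shortcut is that the boundary pairs at $2h$ and $6h$ carry complementary letters position by position, since the block structure around them is complemented. If so, the points chosen there alone already realize both $a$ and $b$ in every case. If that clean complementarity fails for some parity, the fallback is a short case table over the sixteen choices.

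\emph{Main obstacle.} The main obstacle is the one-letter step: the reduction from boundaries does not cover factors of length one, so the argument must rely on explicit letters. Beyond that, the lemma depends entirely on Theorem~\ref{thm:centered-preview}, which I take as given here.
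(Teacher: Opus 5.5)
Your treatment of factors of length at least two is exactly the paper's argument. The one-letter step, however, has a genuine gap.

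The proposed shortcut is invalid. The pairs at $2h$ and $6h$ are indeed complementary position by position for every $n$, so your fallback is never triggered. But $\varepsilon_1$ and $\varepsilon_4$ are chosen independently, and position-wise complementarity says nothing when $\varepsilon_1\ne\varepsilon_4$. For even $n$ these pairs read $ab$ and $ba$, so $\varepsilon_1=0$, $\varepsilon_4=1$ selects two $a$'s; for $n=6$, $t_6[16]=t_6[49]=a$.

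Your letter table is also wrong at $4h$. The letter $t_n[4h]$ is the last letter of $t_{n-1}$ (equivalently, of the second quarter block $\overline{t_{n-2}}$), not of $\overline{t_{n-1}}$. With your value, the four pairs for even $n$ would read $ab,aa,ab,ba$. The choice $(\varepsilon_1,\varepsilon_3,\varepsilon_4)=(0,0,1)$ would then select only $a$'s, so even the fallback case table would appear to refute the lemma.

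The missing observation is that, for each parity, two of the four selected boundaries are homogeneous, so either endpoint carries a forced letter. Write $A=t_{n-3}$ and $B=\overline{A}$; then $t_n=ABBABAAB$ in blocks of length $h$. The block $A$ begins with $a$ and ends with $a$ or $b$ according as $n-3$ is even or odd. Hence:
\begin{itemize}
\item when $n$ is odd, the boundaries at $2h$ and $6h$ are $bb$ and $aa$;
\item when $n$ is even, the boundaries at $3h$ and $4h$ are $aa$ and $bb$.
\end{itemize}
This is how the paper concludes, and it covers all sixteen offset choices at once. You noticed that the $3h$ boundary carries fixed letters but did not see that it is homogeneous for even $n$. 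Combined with the correct $bb$ at $4h$, that is exactly what rescues the even case.
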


\begin{proof}
Let $\Gamma\in\FL_n$.  By definition it contains exactly one endpoint of each of the four boundaries
\[
 2h\mid2h+1,\qquad 3h\mid3h+1,\qquad
 4h\mid4h+1,\qquad 6h\mid6h+1.
\]
Let $x$ be a factor of $t_n$ with $|x|\ge2$.  By the centered-occurrence property, some occurrence of $x$ contains both endpoints of one of these boundaries.  In particular it contains whichever endpoint was selected by $\Gamma$.  Thus every factor of length at least two is attracted.

It remains to check the one-letter factors.  Put $A=t_{n-3}$ and $B=\overline{A}$.  At the level of eight blocks of length $h$,
$
t_n=ABBABAAB.
$
The first letter of $A$ is always $a$; its last letter is $a$ when $n-3$ is even and $b$ when $n-3$ is odd, while the corresponding letters of $B$ are complementary.  Hence two of the four selected boundaries are homogeneous: when $n-3$ is even, the boundaries at $2h$ and $6h$ are respectively $bb$ and $aa$; when $n-3$ is odd, the boundaries at $3h$ and $4h$ are respectively $aa$ and $bb$.  Choosing either endpoint of each such boundary therefore guarantees at least one selected $a$-position and one selected $b$-position.  Thus $\Gamma$ attracts the one-letter factors as well.
\end{proof}

The second family follows from symmetry rather than from a separate coverage argument.

\begin{lemma}[Reflection]
\label{lem:reflection-family}
Every member of $\FR_n$ is a string attractor of $t_n$ for $n\ge6$.
\end{lemma}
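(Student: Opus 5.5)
The plan is to deduce Lemma~\ref{lem:reflection-family} from Lemma~\ref{lem:paired-boundaries} by the reversal symmetry of $t_n$. Let $w^R$ denote the reversal of a word $w$. The first step is to show that $t_n^R$ equals $t_n$ or $\overline{t_n}$: for even $n$ one has $t_n^R=t_n$, and for odd $n$ one has $t_n^R=\overline{t_n}$. I would prove this by induction on $n$ from \eqref{eq:tm-recursion}, using $t_n^R=\overline{t_{n-1}}^{\,R}\,t_{n-1}^R$ together with the fact that reversal commutes with complementation. The precise parity does not matter below; it is enough that $t_n^R\in\{t_n,\overline{t_n}\}$.

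Next I would use two invariance facts for attractors. Complementation exchanges $a$ and $b$ without changing positions, so it maps the factors of $w$ bijectively onto the factors of $\overline{w}$ with the same occurrence intervals. Hence $w$ and $\overline{w}$ have exactly the same attractors. For reversal, let $\rho(i)=|w|+1-i$. An occurrence of $x$ on $[i,j]$ in $w$ corresponds to an occurrence of $x^R$ on $[\rho(j),\rho(i)]$ in $w^R$, and $x\mapsto x^R$ is a bijection between the factor sets. Therefore $\Gamma$ is an attractor of $w$ if and only if $\rho(\Gamma)$ is an attractor of $w^R$. Combining the two facts, $\Gamma\mapsto\rho(\Gamma)$ with $\rho(i)=8h+1-i$ maps attractors of $t_n$ to attractors of $t_n$.

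The last step is to check that $\rho(\FL_n)=\FR_n$. We have $\rho(jh+\varepsilon)=(8-j)h+(1-\varepsilon)$. So the boundary pair $\{jh,jh+1\}$ is sent to the pair $\{(8-j)h,(8-j)h+1\}$, and the offsets $\varepsilon$ and $1-\varepsilon$ both range over $\{0,1\}$. Under this map the pairs at $2h,3h,4h,6h$ go to the pairs at $6h,5h,4h,2h$. Hence $\rho$ maps $\FL_n$ bijectively onto $\FR_n$, and Lemma~\ref{lem:paired-boundaries} gives the claim.

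I do not expect a serious obstacle here. The only care needed is in the index bookkeeping: the reflection must send boundary pairs to boundary pairs, which holds because the pair $\{c,c+1\}$ reflects to $\{8h-c,8h-c+1\}$. The other point to handle correctly is the parity-dependent complementation in $t_n^R$, which is harmless because attractors are invariant under complementation.
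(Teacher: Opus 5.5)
Your proposal is correct and follows essentially the same route as the paper: the reversal identity $t_n^R\in\{t_n,\overline{t_n}\}$ by induction from $t_n=t_{n-1}\overline{t_{n-1}}$, invariance of attractors under complementation and under the reversal map $p\mapsto 8h+1-p$, and the check that this map sends $\FL_n$ onto $\FR_n$, after which Lemma~\ref{lem:paired-boundaries} finishes the argument. Your version is slightly more explicit than the paper's, since you justify the two invariance facts through occurrence intervals, but the structure is the same.
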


\begin{proof}
A straightforward induction gives
\[
 t_n^R=
 \begin{cases}
 t_n,& n\text{ even},\\
 \overline{t_n},& n\text{ odd}.
 \end{cases}
\]
Complementing all letters preserves occurrence positions and hence preserves attractor positions.  Reflection of a word of length $N$ maps an attractor $\Gamma$ to
$
 \{N+1-p:p\in\Gamma\}.
$
Here $N=8h$, so the reflection of
$
 \{2h+\varepsilon_1,3h+\varepsilon_2,4h+\varepsilon_3,6h+\varepsilon_4\}\in\FL_n
$
is, after reordering,
\[
 \{2h+(1-\varepsilon_4),4h+(1-\varepsilon_3),5h+(1-\varepsilon_2),6h+(1-\varepsilon_1)\},
\]
which is an arbitrary member of $\FR_n$.  Lemma~\ref{lem:paired-boundaries} therefore implies the claim.
\end{proof}

\subsection{Complete classification and the number 32}
\label{subsec:complete-stable-classification}

We can now combine the forcing and sufficiency directions.

\begin{theorem}[Complete stable characterization]
\label{thm:stable-characterization}
For every $n\ge6$,
$
 \Acal(t_n)=\FL_n\cup\FR_n.
$
Consequently,
$
 |\Acal(t_n)|=32.
$
\end{theorem}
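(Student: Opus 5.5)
The proof will be assembled from three results already established: the lower bound $\gamma(t_n)=4$, the sixteen-witness lemma for necessity, and the paired-boundary and reflection lemmas for sufficiency. Throughout, $\Acal(t_n)$ denotes the family of attractors of cardinality $\gamma(t_n)$.

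\emph{Step 1: the optimum is four.} By the result of Kutsukake et al.\ (or, independently, by the internal lower bound from the four pairwise disjoint coverages $A_2,B_7,B_2,A_7$ combined with Lemma~\ref{lem:paired-boundaries}), we have $\gamma(t_n)=4$ for $n\ge6$. Hence $\Acal(t_n)$ consists exactly of the four-element attractors.

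\emph{Step 2: inclusion $\Acal(t_n)\subseteq\FL_n\cup\FR_n$.} Let $\Gamma\in\Acal(t_n)$. Every $x\in\mathcal W_n$ is a nonempty factor of $t_n$, so the attractor property gives $\Gamma\cap\Cov_{t_n}(x)\neq\varnothing$ for all sixteen witnesses. Since $|\Gamma|=4$, Lemma~\ref{lem:sixteen-witness} yields $\Gamma\in\FL_n\cup\FR_n$.

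\emph{Step 3: inclusion $\FL_n\cup\FR_n\subseteq\Acal(t_n)$.} Lemmas~\ref{lem:paired-boundaries} and~\ref{lem:reflection-family} show that every member of $\FL_n\cup\FR_n$ is an attractor. We also need each member to have exactly four elements. Its four candidate positions lie within $\{jh,jh+1\}$ for distinct multiples $j$. Because $h\ge8$, these pairs are pairwise disjoint, so the cardinality is four and hence optimal.

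\emph{Step 4: counting.} The remaining work is to verify $|\FL_n|=|\FR_n|=16$ and $\FL_n\cap\FR_n=\varnothing$.
\begin{itemize}
\item Within each family the map $(\varepsilon_1,\dots,\varepsilon_4)\mapsto\Gamma$ is injective, because the pairs $\{jh,jh+1\}$ are disjoint and each $\varepsilon_i$ is recovered from the element in the $i$-th pair. So each family has $2^4=16$ members.
\item The families are disjoint: every member of $\FL_n$ meets $\{3h,3h+1\}$. No member of $\FR_n$ does, since its elements lie in $\{2h,2h+1,4h,4h+1,5h,5h+1,6h,6h+1\}$, and $h\ge8$ keeps $3h,3h+1$ out of that set.
\end{itemize}
Therefore $|\Acal(t_n)|=16+16=32$.

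None of these steps is a real obstacle, since the substantive work sits in the cited lemmas. The only point needing care is the bookkeeping: the families do not collapse in size or overlap, which uses $h\ge8$ (so $h\ge2$ suffices for the separation of the pairs).
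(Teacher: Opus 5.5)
Your proposal is correct and follows the paper's proof essentially step for step: $\gamma(t_n)=4$ from the pairwise disjoint coverages $A_2,B_7,B_2,A_7$ together with Lemma~\ref{lem:paired-boundaries}, necessity from the sixteen-witness lemma, sufficiency from the paired-boundary and reflection lemmas, and the count via the disjoint pairs $\{jh,jh+1\}$, with $\{3h,3h+1\}$ separating the two families. Your explicit check that every member of $\FL_n\cup\FR_n$ has exactly four elements, and hence has optimal size, is a small bookkeeping point that the paper leaves implicit.
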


\begin{proof}
The disjoint coverages $A_2,B_7,B_2,A_7$ imply $\gamma(t_n)\ge4$, while Lemma~\ref{lem:paired-boundaries} supplies four-position attractors, so $\gamma(t_n)=4$.

Let $\Gamma\in\Acal(t_n)$.  Every factor in $\mathcal W_n$ is a factor of $t_n$, hence $\Gamma$ intersects all its ambient coverage sets.  Since $|\Gamma|=4$, Lemma~\ref{lem:sixteen-witness} yields
$
 \Gamma\in\FL_n\cup\FR_n.
$
Conversely, Lemmas~\ref{lem:paired-boundaries} and~\ref{lem:reflection-family} show that every member of either family is an attractor.

For $j\in\{2,3,4,5,6\}$, let $P_j=\{jh,jh+1\}$.  These pairs are mutually disjoint.  Every member of $\FL_n$ independently selects one position from each of $P_2,P_3,P_4,P_6$, so $|\FL_n|=2^4=16$.  Similarly $|\FR_n|=16$.  The two families are disjoint because every member of $\FL_n$ meets $P_3$, whereas no member of $\FR_n$ does.  Hence there are exactly $32$ smallest attractors.
\end{proof}

\subsection{Two constant-size certificates for four-position sets}
\label{subsec:constant-certificates}

The classification proof immediately gives a useful reformulation.  Once the cardinality is known to be four, testing all factors is unnecessary: the sixteen factors in $\mathcal W_n$ already form an exact certificate.

\begin{corollary}[Sixteen-factor certificate]
\label{cor:sixteen-certificate}
For $n\ge6$ and every $\Gamma\subseteq[1,2^n]$ with $|\Gamma|=4$, the following are equivalent:
\begin{enumerate}[label=(\roman*),leftmargin=*]
\item $\Gamma$ is a string attractor of $t_n$;
\item $\Gamma\cap\Cov_{t_n}(x)\ne\varnothing$ for every $x\in\mathcal W_n$.
\end{enumerate}
\end{corollary}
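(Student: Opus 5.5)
The equivalence follows by combining the results already established in this section; no new coverage computation is needed. I will prove the two implications separately.

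For (i)$\Rightarrow$(ii), I use only the definition of a string attractor. Every element of $\mathcal W_n$ is a factor of $t_n$. Indeed, $M_i^{(n)}$ is a MUS of $t_n$. The words $M_i^{(n-1)}$ and $M_i^{(n-2)}$ are factors of $t_{n-1}$ and $t_{n-2}$, which are prefixes of $t_n$ by the recursion~\eqref{eq:tm-recursion}. Their ambient occurrence sets are also nonempty by~\eqref{eq:scaled-occurrences}. Hence any attractor $\Gamma$ meets $\Cov_{t_n}(x)$ for each $x\in\mathcal W_n$. This direction does not use $|\Gamma|=4$.

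For (ii)$\Rightarrow$(i), I apply Lemma~\ref{lem:sixteen-witness} directly. Its hypotheses are exactly $|\Gamma|=4$ together with condition (ii), so it gives $\Gamma\in\FL_n\cup\FR_n$. Lemma~\ref{lem:paired-boundaries} then shows that every member of $\FL_n$ is an attractor, and Lemma~\ref{lem:reflection-family} shows the same for every member of $\FR_n$. Therefore $\Gamma$ is a string attractor of $t_n$.

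There is no real obstacle, since all the substantive work sits in the three lemmas being quoted. The one point I will check is that Lemma~\ref{lem:sixteen-witness} assumes only the sixteen hitting conditions and does not secretly assume that $\Gamma$ is an attractor. Its proof uses only intersections with $A_i$, $B_i$, $C_3$ and $C_6$, so this holds. I will also remark that the corollary is a restatement of Theorem~\ref{thm:stable-characterization}, with $\Acal(t_n)$ described as the solution set of the sixteen constraints.
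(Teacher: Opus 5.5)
Your proposal is correct and matches the paper's proof: the forward direction follows from the definition, and you also correctly check that every witness is a factor of $t_n$. The converse chains Lemma~\ref{lem:sixteen-witness}, which uses only hitting conditions for members of $\mathcal W_n$, with Lemmas~\ref{lem:paired-boundaries} and~\ref{lem:reflection-family}, exactly as the paper does.
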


\begin{proof}
The forward implication is immediate from the definition of attractor.  Conversely, condition~(ii) and Lemma~\ref{lem:sixteen-witness} place $\Gamma$ in $\FL_n\cup\FR_n$, and Lemmas~\ref{lem:paired-boundaries} and~\ref{lem:reflection-family} show that every such set is an attractor.
\end{proof}

There is also a more symmetric, although slightly larger, certificate.  For $d\in\{1,2,3\}$ define
$
 \mathcal C_n^{[d]}=\bigcup_{j=0}^{d-1}\MUS(t_{n-j}),
$
where every factor is named in its source generation but its coverage is always taken in the ambient word $t_n$.

\begin{corollary}[Three-generation MUS certificate]
\label{cor:three-generation}
For $n\ge6$ and every four-position set $\Gamma\subseteq[1,2^n]$,
\[
 \Gamma\text{ is an attractor of }t_n
 \quad\Longleftrightarrow\quad
 \Gamma\cap\Cov_{t_n}(x)\ne\varnothing
 \text{ for every }x\in\mathcal C_n^{[3]}.
\]
The family $\mathcal C_n^{[3]}$ contains exactly $24$ factors.
\end{corollary}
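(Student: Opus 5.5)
The plan is to derive the equivalence directly from Corollary~\ref{cor:sixteen-certificate}, using the inclusion $\mathcal W_n\subseteq\mathcal C_n^{[3]}$.

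For the forward implication, let $x\in\mathcal C_n^{[3]}$. Then $x$ is a MUS of $t_{n-j}$ for some $j\in\{0,1,2\}$. Since $t_{n-j}$ is a prefix of $t_n$ by~\eqref{eq:tm-recursion}, $x$ is a nonempty factor of $t_n$. Consequently, any attractor of $t_n$ meets $\Cov_{t_n}(x)$ by definition.

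For the converse, observe that every member of $\mathcal W_n$ is one of the factors $M_i^{(n)}$, $M_i^{(n-1)}$ or $M_i^{(n-2)}$. Each of these lies in $\MUS(t_n)\cup\MUS(t_{n-1})\cup\MUS(t_{n-2})=\mathcal C_n^{[3]}$; for $n\ge6$ all three generations have index at least $4$, so Theorem~\ref{thm:kishi-mus} applies to each. Hence a four-position set hitting every coverage indexed by $\mathcal C_n^{[3]}$ in particular hits every coverage indexed by $\mathcal W_n$. Corollary~\ref{cor:sixteen-certificate} then shows that it is an attractor.

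It remains to count the factors, and this is the only step requiring care. By Theorem~\ref{thm:kishi-mus}, each of the three generations contributes exactly eight MUSs, and these have lengths $2^{n-3}+2$, $2^{n-4}+2$ and $2^{n-5}+2$ respectively. These lengths are pairwise distinct, so the three sets are disjoint as sets of words, and $|\mathcal C_n^{[3]}|=24$. No further obstacle is expected, since all the geometric work is already contained in Lemma~\ref{lem:sixteen-witness}.
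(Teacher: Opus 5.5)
Your proposal is correct and matches the paper's proof: the forward direction holds by definition, the converse follows from $\mathcal W_n\subseteq\mathcal C_n^{[3]}$ together with Corollary~\ref{cor:sixteen-certificate}, and the count of $24$ comes from eight MUSs per generation with pairwise distinct lengths. Your remark that $t_{n-j}$ is a prefix of $t_n$ simply makes explicit the step the paper calls ``immediate''.
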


\begin{proof}
The forward implication is immediate.  For the converse, note that $\mathcal W_n\subseteq\mathcal C_n^{[3]}$, so Corollary~\ref{cor:sixteen-certificate} applies.  The three generations contribute eight factors each.  Their lengths are
\[
 2^{n-3}+2,\qquad 2^{n-4}+2,\qquad 2^{n-5}+2,
\]
which are pairwise distinct for $n\ge6$.  Thus no factor is counted twice and the total is $24$.
\end{proof}

The distinction between the two certificates is useful.  The sixteen-factor family is the smallest witness system needed by the present proof, while the $24$-factor family has the conceptual advantage of consisting of three complete MUS generations.

\subsection{Why three generations are really needed}
\label{subsec:depth-optimality}

The preceding corollary naturally raises a question: could two consecutive MUS generations already suffice for four-position sets?  The answer is no.  The obstruction is explicit and persists at every order.

\begin{proposition}[Optimality of the three-generation depth]
\label{prop:depth-optimality}
For every $n\ge6$, depth three is the minimum $d\in\{1,2,3\}$ for which hitting the ambient coverages of every factor in $\mathcal C_n^{[d]}$ characterizes the four-position attractors of $t_n$.
\end{proposition}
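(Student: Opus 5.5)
The plan is to prove the two halves of the statement separately. The positive half, that depth three suffices, is exactly Corollary~\ref{cor:three-generation}. Since $\mathcal C_n^{[1]}\subseteq\mathcal C_n^{[2]}$, a four-position set that hits every coverage from $\mathcal C_n^{[1]}$ also hits every coverage from $\mathcal C_n^{[2]}$ if it does so for the larger family, so a single counterexample at depth two rules out both $d=1$ and $d=2$. It therefore remains to exhibit, for every $n\ge6$, one set $\Gamma$ with $|\Gamma|=4$ that meets all sixteen ambient coverages $A_1,\dots,A_8,B_1,\dots,B_8$ but is not an attractor of $t_n$.

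The candidate comes from the left case of Lemma~\ref{lem:sixteen-witness}. In that case the only role of $C_3$ was to pin $p$ to $\{2h,2h+1\}$. Dropping $C_3$ should leave $p$ free inside $A_1\cap A_2=[3h/2,2h+1]$, so I take
\[
 \Gamma=\{2h-1,\;3h,\;4h,\;6h\}.
\]

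The first step is to compute the two missing previous-generation coverages $B_4$ and $B_5$, which were not listed earlier. By \eqref{eq:scaled-occurrences} with $d=1$, the starts are $(h/4)\cdot\Occ_{t_5}(M_i^{(4)})$, and the length is $h/2+2$. Table~\ref{tab:base-occurrences} then gives
\[
 B_4=[3h/2,2h+1]\cup[6h,13h/2+1],\qquad B_5=[2h,5h/2+1]\cup[11h/2,6h+1].
\]

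The second step is to check all sixteen coverages against $\Gamma$ by reading off the explicit intervals:
\begin{itemize}
\item $2h-1\in A_1\cap A_2$;
\item $3h$ lies in $A_3$, $A_4$, $B_6$, $B_7$ and $B_8$;
\item $4h$ lies in $A_4$, $A_5$, $B_1$, $B_2$ and $B_3$;
\item $6h$ lies in $A_6$, $A_7$, $A_8$, $B_4$ and $B_5$.
\end{itemize}
Each containment is a one-line endpoint comparison, using $h\ge8$ where needed (for example $13h/4+1<4h$ and $11h/4\le3h$).

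The third step shows that $\Gamma$ is not an attractor. The cleanest route is to show that $\Gamma$ misses $C_3=\Cov_{t_n}(M_3^{(n-2)})$. From the five components of $C_3$ listed earlier:
\begin{itemize}
\item $2h-1$ precedes the component $[2h,9h/4+1]$ and follows $[h/2,3h/4+1]$;
\item $3h$ and $4h$ fall in the gaps around $[7h/2,15h/4+1]$, using $9h/4+1<3h$ and $15h/4+1<4h$ for $h\ge8$;
\item $6h$ lies strictly between $[11h/2,23h/4+1]$ and $[13h/2,27h/4+1]$, using $23h/4+1<6h$.
\end{itemize}
Hence the factor $M_3^{(n-2)}$ is not attracted. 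As an independent sanity check, $\Gamma\notin\FL_n\cup\FR_n$, since $2h-1\notin P_2$, which agrees with Theorem~\ref{thm:stable-characterization}.

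The main obstacle is only bookkeeping. One has to make sure that every one of the eight $B_i$ is actually hit, including $B_4$ and $B_5$, which the witness lemma never needed. One also has to make sure that each strict inequality holds uniformly for all $h=2^{n-3}\ge8$ and not only asymptotically. If some $B_i$ turned out not to be hit, I would move $p$ elsewhere within $[3h/2,2h+1]\setminus\{2h,2h+1\}$, for instance to $3h/2$, and redo the same interval comparisons.
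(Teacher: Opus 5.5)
Your proof is correct and follows the paper's own argument. Depth three comes from Corollary~\ref{cor:three-generation}, and depth two (hence also depth one) fails because of a four-position set that hits all of $A_1,\dots,A_8,B_1,\dots,B_8$, including the extra $B_4,B_5$, yet misses $C_3$. The only difference is the first point: the paper uses $\Gamma_n^\star=\{3h/2,3h,4h,6h\}$, while your $2h-1$ is another point of $[3h/2,2h-1]$, the gap of $C_3$ inside $A_1\cap A_2$, and your endpoint comparisons all hold for $h\ge8$.
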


\begin{proof}
Depth three is sufficient by Corollary~\ref{cor:three-generation}.  To show that depth two is insufficient, define
$
 \Gamma_n^\star=\{3h/2,3h,4h,6h\}.
$
The displayed formulas for the $A_i$ show that all current-generation MUS coverages are hit: $3h/2$ hits $A_1,A_2$; $3h$ hits $A_3,A_4$; $4h$ hits $A_5$; and $6h$ hits $A_6,A_7,A_8$.

For completeness, the two previous-generation coverages not displayed earlier are
$
 B_4=[3h/2,2h+1]\cup[6h,13h/2+1],
$
and
$
 B_5=[2h,5h/2+1]\cup[11h/2,6h+1].
$
Together with the six previous $B_i$ formulas, they show that $\Gamma_n^\star$ hits every ambient coverage arising from $\MUS(t_n)\cup\MUS(t_{n-1})$.

However, $\Gamma_n^\star$ misses $C_3=\Cov_{t_n}(M_3^{(n-2)})$.  Indeed,
\begin{align*}
3h/2&\in(3h/4+1,2h),\\
3h&\in(9h/4+1,7h/2),\\
4h&\in(15h/4+1,11h/2),\\
6h&\in(23h/4+1,13h/2),
\end{align*}
and these four intervals are gaps between consecutive components of $C_3$.  Hence $\Gamma_n^\star$ is not an attractor.  Depth two is therefore insufficient, and depth one is insufficient a fortiori.
\end{proof}

\begin{remark}[Scope of the optimality statement]
The proposition concerns only the nested hierarchy of certificates obtained from one, two, or three \emph{consecutive complete MUS generations}.  It does not claim that every possible factor certificate must contain a factor originating in $t_{n-2}$, nor does it rule out certificates mixing factors from the first two generations with other non-MUS witnesses.
\end{remark}

\subsection{Why the cardinality restriction matters}
\label{subsec:cardinality-restriction}

Both constant-size certificates above rely essentially on the hypothesis $|\Gamma|=4$.  Without that restriction, hitting three generations of MUS coverages does not characterize attractors.

For example, in $t_6$ the five-position set
$
 \Gamma_0=\{4,12,24,34,48\}
$
hits the ambient coverages of all $24$ factors in
$
 \MUS(t_6)\cup\MUS(t_5)\cup\MUS(t_4),
$
but misses $\Cov_{t_6}(bb)$.  It therefore passes the three-generation MUS test while failing to be an attractor.

This example explains why the next part of the extended version requires a different viewpoint.  To characterize attractors of \emph{arbitrary} cardinality, one must identify the complete antichain of inclusion-minimal factor coverages rather than rely only on a constant number of recent MUS generations.

\section{Canonical minimal factor coverages}
\label{sec:canonical-coverages}

The constant-size certificates of the previous section exploit a very strong hypothesis: the candidate set has exactly four positions.  Under that restriction, a relatively small collection of MUS coverages can force the entire geometry of a smallest attractor.  For arbitrary position sets, however, the example at the end of Section~\ref{subsec:cardinality-restriction} shows that three recent MUS generations are no longer enough.

The right object is therefore not a fixed collection of factors but the partial order of \emph{all} factor coverages under inclusion.  This section identifies its minimal elements completely.  Besides giving an exact certificate for attractors of arbitrary cardinality, the result reveals a second recursive structure hidden in the Thue--Morse word: every essential long coverage constraint descends, under the morphism, to one of the eight order-four minimal unique substrings.

For $n\ge5$, define
$
 \mathcal D_n=
 \{\Cov_{t_n}(aa),\Cov_{t_n}(bb)\}
 \cup
 \bigcup_{m=4}^{n}
 \{\Cov_{t_n}(M):M\in\MUS(t_m)\}.
$
As throughout the paper, a factor $M\in\MUS(t_m)$ is named in its \emph{source generation} $t_m$, while its coverage is always taken in the ambient word $t_n$.  The purpose of the section is to prove
$
 \Hmin(t_n)=\mathcal D_n.
$

It is useful to separate the proof into two logically different tasks.
\begin{enumerate}[label=(\arabic*),leftmargin=*]
\item \emph{Exhaustion:} show that every inclusion-minimal factor coverage belongs to $\mathcal D_n$.
\item \emph{Irredundancy:} show that no two members of $\mathcal D_n$ contain one another.
\end{enumerate}
The first part is governed by desubstitution and frequency criticality; the second is governed by the geometry and number of occurrences.

\subsection{From minimal coverages to critical factors}
\label{subsec:critical-representatives}

Suppose that two factors have the same coverage.  The shorter one is preferable when studying minimality, because it places a stronger local requirement on the occurrence structure.  This simple observation leads to the following notion.

For a factor $x$ of length at least two, let $x^-$ and $x^+$ denote respectively its prefix and suffix of length $|x|-1$.

\begin{definition}[Two-sided frequency criticality]
A factor $x$ of a finite word $w$ is \emph{two-sided frequency-critical} if
\[
 |\Occ_w(x^-)|>|\Occ_w(x)|
 \qquad\text{and}\qquad
 |\Occ_w(x^+)|>|\Occ_w(x)|.
\]
Thus deleting either the first or the last symbol creates at least one additional occurrence.
\end{definition}

The definition is deliberately phrased in terms of frequencies rather than uniqueness.  A MUS is an extreme example: it occurs once, while its longest proper prefix and suffix occur at least twice.  In the present section we must also deal with factors that are repeated in the ambient word, so ordinary MUS terminology is no longer sufficient.

\begin{lemma}[Critical representatives]
\label{lem:critical-representatives}
Let $C\in\Hmin(w)$ and suppose that $C$ has a factor representative of length at least two.  If $x$ is a shortest such representative, then $x$ is two-sided frequency-critical.
\end{lemma}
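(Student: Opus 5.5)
The plan is to prove each of the two inequalities in the definition of two-sided frequency criticality by contradiction, using only the inclusion-minimality of $C$ in $\mathcal H(w)$. This sidesteps the delicate case $|x|=2$, where $x^-$ or $x^+$ is a single letter and might itself be a representative of $C$ of length one. The argument never needs $x^\pm$ to have length at least two. It uses only that $x^\pm$ is a nonempty factor of $w$, so that $\Cov_w(x^\pm)\in\mathcal H(w)$. Shortestness of $x$ will not even be invoked; the conclusion holds for every representative of length at least two.

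First consider $x^-$. Every occurrence of $x$ starting at $p$ yields an occurrence of $x^-$ starting at $p$, so $\Occ_w(x)\subseteq\Occ_w(x^-)$ and $|\Occ_w(x^-)|\ge|\Occ_w(x)|$. Suppose equality holds. Since the sets are finite, $\Occ_w(x^-)=\Occ_w(x)$. Then
\[
\Cov_w(x^-)=\bigcup_{p\in\Occ_w(x)}[p,p+|x|-2]\subseteq\bigcup_{p\in\Occ_w(x)}[p,p+|x|-1]=C .
\]
The inclusion is strict. Let $p^\ast=\max\Occ_w(x)$ (nonempty because $x$ is a factor). The position $p^\ast+|x|-1$ lies in $C$. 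However, every interval $[p,p+|x|-2]$ with $p\le p^\ast$ ends before it, so this position is not in $\Cov_w(x^-)$. Hence $\Cov_w(x^-)$ is a member of $\mathcal H(w)$ strictly contained in $C$, contradicting $C\in\Hmin(w)$. Therefore $|\Occ_w(x^-)|>|\Occ_w(x)|$.

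The suffix case is the mirror image. Each occurrence of $x$ at $p$ gives an occurrence of $x^+$ at $p+1$, so $|\Occ_w(x^+)|\ge|\Occ_w(x)|$. Suppose equality holds. Then $\Occ_w(x^+)=\Occ_w(x)+1$, and $\Cov_w(x^+)=\bigcup_{p\in\Occ_w(x)}[p+1,p+|x|-1]\subseteq C$. With $p_\ast=\min\Occ_w(x)$, the position $p_\ast$ lies in $C$ but not in $\Cov_w(x^+)$, again contradicting minimality.

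The only point requiring care is the step I expected to be the obstacle: $|x|=2$ with a one-letter representative of $C$ present. There one cannot argue that $x^\pm$ would be a shorter representative of length at least two. The extremal-position argument above handles it uniformly, because it shows $\Cov_w(x^\pm)\neq C$ outright rather than appealing to the choice of $x$.
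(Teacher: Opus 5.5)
Your proof is correct. It shares the paper's skeleton: equal occurrence counts force $\Occ_w(x^-)=\Occ_w(x)$, hence $\Cov_w(x^-)\subseteq C$, and symmetrically for $x^+$. The two arguments differ only at the final step.

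The paper uses minimality to upgrade the inclusion to an equality, and then claims a contradiction with the choice of $x$ as a shortest representative. You instead show, via the extreme positions $p^\ast+|x|-1$ and $p_\ast$, that the inclusion is always strict, so minimality alone is already contradicted.

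Your route buys two things:
\begin{enumerate}[label=(\arabic*),leftmargin=*]
\item It proves the stronger statement that \emph{every} representative of length at least two of a member of $\Hmin(w)$ is two-sided frequency-critical, so the shortestness hypothesis is superfluous.
\item It is airtight when $|x|=2$. There the paper's appeal to the choice of $x$ does not literally apply: $x^-$ would be a one-letter representative, and ``shortest representative of length at least two'' does not exclude that. Your observation shows the equality case in the paper's argument is in fact vacuous.
\end{enumerate}

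In the paper's later use the subtlety is harmless, since one-letter coverages are shown separately to be non-minimal. For an arbitrary word $w$, though, your version is the cleaner argument.
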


\begin{proof}
Assume first that
$
 |\Occ_w(x^-)|=|\Occ_w(x)|.
$
Every occurrence of $x$ induces an occurrence of $x^-$ at the same starting position.  Equality of the two cardinalities therefore implies that \emph{every} occurrence of $x^-$ extends one position to the right to an occurrence of $x$.  Consequently,
$
 \Cov_w(x^-)\subseteq\Cov_w(x)=C.
$
Since $C$ is inclusion-minimal in $\mathcal H(w)$, the inclusion must be an equality.  But then $x^-$ is a shorter factor representative of $C$, contradicting the choice of $x$.

Hence $|\Occ_w(x^-)|>|\Occ_w(x)|$.  The argument for $x^+$ is symmetric after indexing occurrences by their final positions.
\end{proof}

The lemma is elementary, but conceptually important: it converts a statement about the poset of coverage sets into a local statement about how occurrence frequencies change when one symbol is removed.

\subsection{Unique phase and critical descent}
\label{subsec:critical-descent}

We next exploit a special feature of the Thue--Morse morphism.  A sufficiently long factor determines the parity class of all its occurrences.  Informally, after reading four symbols one already knows where the canonical length-two image blocks begin.  This makes desubstitution unambiguous.

For $\lambda,\rho\in\{0,1\}$, write
$
 \operatorname{trim}_{\lambda,\rho}(z)
   =z[1+\lambda\, ..\, |z|-\rho].
$
Thus $\lambda=1$ removes the first symbol and $\rho=1$ removes the last one.

\begin{lemma}[Factor synchronization and critical descent]
\label{lem:factor-critical-descent}
Every factor $x$ of $t_n$ with $|x|\ge4$ has a unique morphic phase.  More precisely, there are uniquely determined $\lambda,\rho\in\{0,1\}$ and a factor $v$ of $t_{n-1}$ such that
$
 x=\operatorname{trim}_{\lambda,\rho}(\mu(v)),
$
and the occurrences of $x$ in $t_n$ correspond bijectively to the occurrences of $v$ in $t_{n-1}$, with the appropriate doubled-start offset.

If, in addition, $x$ is two-sided frequency-critical and $|x|\ge5$, then
$
 \lambda=\rho=1,
 \qquad
 x=\operatorname{trim}(\mu(v)),
$
and $v$ is two-sided frequency-critical in $t_{n-1}$.
\end{lemma}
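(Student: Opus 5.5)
We split the argument into a synchronization part and a criticality part. The first part gives the unique phase and the occurrence bijection; the second shows that a long critical factor must have both ends trimmed and that criticality passes to its preimage.

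\emph{Unique phase.} The plan is a local version of Lemma~\ref{lem:synchronization}. Every factor of length at least four contains two consecutive length-two windows in the same parity class. Take an occurrence of $x$ at position $q$. Its phase, meaning the parity of $q$, says where the canonical image blocks begin. We claim that all occurrences of $x$ have the same phase. Suppose two occurrences had different parities. Then one of them places two full aligned pairs, each in $\{ab,ba\}$, at windows that in the other occurrence straddle block boundaries. The argument of Lemma~\ref{lem:synchronization} then produces $aaa$ or $bbb$ in the preimage, unless the relevant windows are squares $aa$ or $bb$. So the case analysis is this. If $x$ contains $aa$ or $bb$, the square must straddle a boundary, because aligned pairs are never squares, and this fixes the phase. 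If $x$ contains no square, then $x$ is a factor of $(ab)^\omega$ of length at least four, for example $abab$. Such an $x$ decodes in one phase to $aa$ or $bb$ and in the other to a factor containing $aaa$ or $bbb$. The second option is impossible in $t_{n-1}$, so again only one phase survives.

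\emph{Decoding.} Once the phase is fixed, set $\lambda=1$ if the occurrence starts at an even position, that is, in the second half of an image block. Set $\rho$ similarly at the right end. The letter of $v$ under a half-block is determined by the visible symbol, since $\mu$ is injective on letters with distinct first letters and distinct second letters. Hence $v$ is unique, and occurrences correspond bijectively via $q=2p-1+\lambda$, with the same boundary care as in the proof of Lemma~\ref{lem:mus-lifting}.

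\emph{Critical descent.} Assume $x$ is two-sided frequency-critical with $|x|\ge5$, and suppose $\lambda=0$. The step to rule this out is the following. If $x$ starts with a full block $\mu(c)$, then $x^+$ has length at least four, so it also has a unique phase and a preimage $v'$ with $v=c\,v'$ or a similar relation. Every occurrence of $x^+$ starts in the middle of a block whose first half is forced: it is the complement of the first letter of $x^+$. This first half is exactly the first letter of $x$, because $\mu(c)=c\overline{c}$. Therefore every occurrence of $x^+$ extends to an occurrence of $x$, so $|\Occ(x^+)|=|\Occ(x)|$, contradicting criticality. The boundary case is an occurrence of $x^+$ at position $1$. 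This cannot happen, since position $1$ is odd and the phase of $x^+$ is even. Symmetrically $\rho=1$.

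Now $x=\operatorname{trim}(\mu(v))$ with $|v|\ge3$. Deleting the first letter of $v$ corresponds to deleting the first two symbols of $x$, and $|\Occ(\operatorname{trim}_{1,1}(\mu(v^+)))|=|\Occ(v^+)|$. By the argument just given, removing one symbol from the left of $x$ amounts to removing the same trimmed half-block, so $\Occ(x^+)$ is in bijection with $\Occ(v^+)$; here we again need $|x^+|\ge4$ for a unique phase. Thus $|\Occ(v^+)|=|\Occ(x^+)|>|\Occ(x)|=|\Occ(v)|$, and the same holds on the other side.

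\emph{Main obstacle.} The delicate step is proving uniqueness of phase for every length-four factor, including boundary occurrences near the ends of $t_n$. I would settle this by a finite check of the length-four factors of $t_5$ together with the square-straddling argument.
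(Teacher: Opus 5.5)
Your proposal is correct and follows the paper's proof. Both arguments run through the same steps: unique phase, decoding with forced completion symbols, then ruling out $\lambda=0$ (and symmetrically $\rho=0$) because the forced preceding symbol makes every occurrence of $x^+$ extend to an occurrence of $x$, and finally transferring the strict inequalities through the phase bijections $\Occ(x^\pm)\leftrightarrow\Occ(v^\pm)$. The one real difference is how you get phase uniqueness: you derive it structurally (a square $aa$ or $bb$ must straddle a block boundary, and an alternating factor read in the wrong phase decodes to a cube), whereas the paper reads it off the explicit split of the ten length-four factors into odd-start and even-start classes.
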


\begin{proof}
The ten length-four factors of the Thue--Morse word split into two disjoint phase classes.  With one-based positions,
\[
 abab,\ abba,\ baab,\ baba
\]
start only at odd positions, whereas
\[
 aaba,\ aabb,\ abaa,\ babb,\ bbaa,\ bbab
\]
start only at even positions.  This can be read directly from two consecutive canonical images $\mu(a)=ab$ and $\mu(b)=ba$ and from the factors crossing an image boundary.

The length-four prefix of $x$ therefore fixes the phase of every occurrence of $x$.  Completing at most one symbol on each side aligns the factor with canonical image blocks.  Whenever a completion symbol is needed, it is forced because the two symbols of every image block are complementary.  Decoding the aligned blocks gives a unique factor $v$ and the claimed bijection between occurrence starts.

Now assume that $x$ is critical.  If $\lambda=0$, then $x$ begins at an image boundary.  Its suffix $x^+$ has length at least four and starts in the opposite phase.  Since that phase is unique, every occurrence of $x^+$ has a uniquely determined preceding symbol, and that symbol reconstructs $x$.  Hence
$
 |\Occ_{t_n}(x^+)|=|\Occ_{t_n}(x)|,
$
contradicting criticality.  Thus $\lambda=1$.  The symmetric argument applied to the prefix $x^-$ gives $\rho=1$.

Finally, when $x=\operatorname{trim}(\mu(v))$, the same phase correspondence identifies occurrences of $x^-$ and $x^+$ with occurrences of $v^-$ and $v^+$, respectively.  Therefore the two strict frequency inequalities for $x$ transfer directly to $v$.
\end{proof}

The key consequence is recursive: a long essential factor does not merely desubstitute; it desubstitutes to another factor satisfying the same criticality condition.  We can therefore descend repeatedly until length four.

\subsection{The critical factors at the base level}
\label{subsec:critical-base}

The length-four factors of $t_4$ are
\[
 aaba,\ aabb,\ abaa,\ abab,\ abba,\ baab,\ baba,\ babb,\ bbaa,\ bbab.
\]
Exactly eight of them are two-sided critical:
\[
 aaba,\ aabb,\ abaa,\ abab,\ baba,\ babb,\ bbaa,\ bbab.
\]
These are precisely the eight members of $\MUS(t_4)$ from Table~\ref{tab:base-occurrences}.  The two remaining factors, $abba$ and $baab$, fail criticality on both sides.

For later use we make explicit why this finite classification remains valid in every ambient order.  For a factor $u$ of $t_r$, let $R_r(u)$ and $L_r(u)$ denote the sets of right and left extensions of $u$ in $t_r$, including a boundary marker when $u$ is a suffix or a prefix.  Then a factor $x$ is critical precisely when the prefix $x^-$ has an alternative right extension and the suffix $x^+$ has an alternative left extension.

The length-five factors already present in $t_4$ are
\begin{align*}
 &aabab,\ aabba,\ abaab,\ ababb,\ abbaa,\ abbab,\\
 &baaba,\ baabb,\ babaa,\ babba,\ bbaab,\ bbaba.
\end{align*}
This set is closed under complementation.  Moreover, $t_r$ begins with $abba$, while its last four symbols are $abba$ for even $r$ and $baab$ for odd $r$.  Hence the four length-five factors crossing the central join of $t_{r+1}=t_r\overline{t_r}$ are
\[
 \begin{array}{ll}
 r\text{ even}:& abbab,\ bbaba,\ babaa,\ abaab,\\
 r\text{ odd}:& baabb,\ aabba,\ abbaa,\ bbaab,
 \end{array}
\]
all of which already occur in $t_4$.  Inductively, the internal one-letter extension sets of factors of length at most four therefore stabilize from order four onward.  Including the two word boundaries yields the exhaustive pattern
\begin{center}
\begin{tabular}{lcc}
\toprule
Factor class & Alternative right & Alternative left\\
\midrule
$aab,bba$ & no & yes\\
$abb,baa$ & yes & no\\
$abba,baab$ & no & no\\
all other factors of lengths $2$--$4$ & yes & yes\\
\bottomrule
\end{tabular}
\end{center}
for every ambient $t_r$, $r\ge4$.

Thus the two- and three-letter critical factors are
\[
 aa,\ bb,\ ab,\ ba,\ aba,\ bab,
\]
and the eight critical length-four factors are exactly $\MUS(t_4)$.

Combining this finite base with Lemma~\ref{lem:factor-critical-descent} yields the classification of all possible minimal-coverage representatives.

\begin{lemma}[Classification of possible minimal representatives]
\label{lem:possible-minimal-representatives}
For every $n\ge5$,
$
 \Hmin(t_n)\subseteq\mathcal D_n.
$
\end{lemma}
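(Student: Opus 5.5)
Take $C\in\Hmin(t_n)$ and a shortest factor representative $x$ of $C$; the goal is to show that $C$ is one of the listed coverages.

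\emph{Length one.} If $|x|=1$, then $C$ is the coverage of a letter. This coverage contains $\Cov_{t_n}(aa)$ or $\Cov_{t_n}(bb)$, and since $n\ge5$ it does so strictly: some occurrence of $a$ (for instance inside $aba$) is not covered by $aa$. This contradicts minimality. So $|x|\ge2$, and Lemma~\ref{lem:critical-representatives} makes $x$ two-sided frequency-critical in $t_n$.

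\emph{Lengths two to four.} By the base-level table, a critical $x$ with $2\le|x|\le4$ is one of $aa,bb,ab,ba,aba,bab$ or a member of $\MUS(t_4)$. The cases $aa$, $bb$ and $\MUS(t_4)$ lie in $\mathcal D_n$ directly. The others must be excluded by exhibiting a strictly smaller coverage.
\begin{itemize}
\item $ab$ and $ba$: every position of $t_n$ lies in some canonical block $ab$ or $ba$, so these coverages are all of $[1,2^n]$. They strictly contain $\Cov_{t_n}(aa)$, which misses position $1$.
\item $aba$ and $bab$: I would show $\Cov_{t_n}(bab)\supseteq\Cov_{t_n}(M)$ for a suitable $M\in\MUS(t_4)$ containing $bab$, say $babb$ or $bbab$. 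Each occurrence of $babb$ contains an occurrence of $bab$, so the inclusion is automatic. Strictness should follow because $bab$ also occurs inside $abab$, at positions $babb$ does not cover; a short check with Table~\ref{tab:base-occurrences} scaled via \eqref{eq:scaled-occurrences} should confirm this. The symmetric argument handles $aba$.
\end{itemize}
So only the listed short cases survive.

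\emph{Length at least five: descent.} Apply Lemma~\ref{lem:factor-critical-descent} repeatedly. A critical $x$ of length at least $5$ equals $\operatorname{trim}(\mu(v))$ with $v$ critical in $t_{n-1}$ and $|v|=(|x|+2)/2$. Lengths strictly decrease while they are at least $5$, and the map $\ell\mapsto(\ell+2)/2$ sends lengths $\ge5$ to lengths $\ge3.5$. So after $k$ steps we reach a critical factor $v_k$ of $t_{n-k}$ with $|v_k|=4$; the alternative, length $3$, cannot arise from length $\ge5$ since $(\ell+2)/2\ge 3.5$. Moreover $n-k\ge4$, since $t_{n-k}$ must contain a critical length-four factor and the base table is valid for $r\ge4$. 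Hence $v_k\in\MUS(t_4)$, say $v_k=M_i^{(4)}$. Reversing the descent with Lemma~\ref{lem:mus-lifting} gives $x=M_i^{(4+k)}$, and $4+k\le n$ because $x$ is a factor of $t_n$ whose length $2^{k+1}+2$ is at most $2^n$. The remaining step is to check that the ambient word of the descent is consistent: the factor-level identity $x=\operatorname{trim}^{k}\mu^{k}(v_k)$ is what matters, and this identity is intrinsic to the words themselves, independent of the ambient generation. Therefore $C=\Cov_{t_n}(M_i^{(4+k)})\in\mathcal D_n$.

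\emph{Main obstacle.} The hard step is the exclusion of $aba$ and $bab$, and more generally making sure that every short critical representative outside $\mathcal D_n$ is genuinely non-minimal. This needs an explicit strictly smaller coverage, and the strictness has to be verified uniformly in $n\ge5$, not only in $t_5$. I would first try the inclusion $\Cov(babb)\subsetneq\Cov(bab)$. If strictness at the ends turned out to be delicate, I would switch to showing that $\Cov(bab)$ is a union of coverages of its critical extensions $abab$, $babb$, $bbab$, one of which is a proper subset.
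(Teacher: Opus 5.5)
Your overall architecture is the paper's: discard one-letter coverages, pass to a shortest two-sided critical representative, descend to length four, and lift back with Lemma~\ref{lem:mus-lifting}. However, both of your exclusions of the short critical factors rest on false coverage claims, and these exclusions are where the lemma needs more than the descent.

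\textbf{The cases $ab$ and $ba$.} The canonical blocks give only $\Cov_{t_n}(ab)\cup\Cov_{t_n}(ba)=[1,2^n]$, not that each coverage is the whole word. Since $t_n$ begins with $abbabaab$:
\begin{itemize}
\item positions $3$ and $6$ lie in no occurrence of $ab$, although $3\in\Cov_{t_n}(bb)$ and $6\in\Cov_{t_n}(aa)$;
\item positions $2$ and $7$ lie in no occurrence of $ba$, although $2\in\Cov_{t_n}(bb)$ and $7\in\Cov_{t_n}(aa)$.
\end{itemize}
So neither homogeneous-pair coverage is contained in $\Cov_{t_n}(ab)$ or $\Cov_{t_n}(ba)$.

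\textbf{The cases $aba$ and $bab$.} The step ``each occurrence of $babb$ contains an occurrence of $bab$, so the inclusion is automatic'' is a non sequitur. $\Cov(y)\subseteq\Cov(x)$ requires every position of every occurrence of $y$ to lie in some occurrence of $x$. It fails for every $n\ge5$: position $2$ of the occurrence of $bbab$ at $[2,5]$, and position $23$ of the occurrence of $babb$ at $[20,23]$ (which is followed by $aa$), lie in no occurrence of $bab$. Your fallback fails for the same reason. The added letter of a one-letter extension of $bab$ need not lie in any occurrence of $bab$, so the coverages of $abab$, $babb$, $bbab$ are not subsets of $\Cov(bab)$. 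In fact no member of $\MUS(t_4)$ works. An $a$ covered by $bab$ must be flanked by two $b$'s. This rules out the four MUSs containing $aa$, and for $abab$ and $baba$ it would create the overlap $babab$.

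\textbf{The missing idea: tiling by a square.} We have $abab=ab\cdot ab$ and $baba=ba\cdot ba$ in $\MUS(t_4)$, and $abaaba=aba\cdot aba$ and $babbab=bab\cdot bab$ in $\MUS(t_5)$. Every occurrence of such a square is the union of two occurrences of its root. Hence
\[
\Cov_{t_n}(abab)\subseteq\Cov_{t_n}(ab),\qquad
\Cov_{t_n}(baba)\subseteq\Cov_{t_n}(ba),
\]
\[
\Cov_{t_n}(abaaba)\subseteq\Cov_{t_n}(aba),\qquad
\Cov_{t_n}(babbab)\subseteq\Cov_{t_n}(bab),
\]
with all left-hand sides in $\mathcal D_n$ for $n\ge5$. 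This is exactly where the hypothesis $n\ge5$ is used: for $t_4$, the coverages of $aba$ and $bab$ really are minimal.

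Strictness, which you flagged as the main obstacle, is unnecessary. If $C=\Cov_{t_n}(bab)$ were minimal, the inclusion would force $C=\Cov_{t_n}(babbab)\in\mathcal D_n$, which is all the lemma asserts.

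\textbf{A smaller point in the descent.} The bound $2^{k+1}+2\le 2^n$ only gives $k\le n-2$. This is genuinely insufficient, because $M_1^{(n+1)}$ is always a factor of $t_n$; for example, $M_1^{(6)}=bbaababbab$ occurs at position $8$ of $t_5$. The inequality $4+k\le n$ has to come from your other remark. That remark must be completed by the direct check that $t_2$ and $t_3$ have no two-sided critical factor of length four: in each, every length-four factor has a length-three prefix occurring only once.
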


\begin{proof}
Neither one-letter coverage is minimal, because
\[
 \Cov_{t_n}(aa)\subsetneq\Cov_{t_n}(a),
 \qquad
 \Cov_{t_n}(bb)\subsetneq\Cov_{t_n}(b).
\]
Hence every inclusion-minimal coverage has a representative of length at least two.  Choose a shortest one.  By Lemma~\ref{lem:critical-representatives}, it is critical.

If its length is at least five, repeated application of Lemma~\ref{lem:factor-critical-descent} eventually reaches one of the eight critical length-four factors.  Reversing the descent and using the trimmed MUS recurrence of Lemma~\ref{lem:mus-lifting} shows that the original factor is $M_i^{(m)}\in\MUS(t_m)$ for some $4\le m\le n$.

It remains to remove the four short critical factors $ab,ba,aba,bab$.  The internal repetitions
\[
 abab=ab\,ab,
 \qquad
 baba=ba\,ba
\]
give
\[
 \Cov_{t_n}(abab)\subseteq\Cov_{t_n}(ab),
 \qquad
 \Cov_{t_n}(baba)\subseteq\Cov_{t_n}(ba).
\]
Similarly,
\[
 abaaba=aba\,aba,
 \qquad
 babbab=bab\,bab
\]
give
\[
 \Cov_{t_n}(abaaba)\subseteq\Cov_{t_n}(aba),
 \qquad
 \Cov_{t_n}(babbab)\subseteq\Cov_{t_n}(bab).
\]
The four factors on the left are MUSs of $t_4$ or $t_5$, and hence their coverages belong to $\mathcal D_n$.  Therefore none of $\Cov(ab),\Cov(ba),\Cov(aba),\Cov(bab)$ can be inclusion-minimal.  The only remaining possibilities are exactly the members of $\mathcal D_n$.
\end{proof}

At this point we know that the candidate family is exhaustive.  What remains is to show that none of its members is redundant.

\subsection{Occurrence geometry across all generations}
\label{subsec:all-generation-geometry}

The antichain proof uses two pieces of quantitative information: the spacing between distinct occurrences and the exact number of occurrences.  Both ultimately come from the same recursive transport law established in Section~\ref{sec:mus-transport}.

Let $M_i^{(m)}$ be the $i$th MUS of $t_m$ and put
\[
 s=2^{m-4},
 \qquad
 q=n-m+4.
\]
Iterating Lemma~\ref{lem:mus-lifting} gives
$
 \Occ_{t_n}(M_i^{(m)})
   =s\,\Occ_{t_q}(M_i^{(4)}),
$
where $sS=\{sp:p\in S\}$, and
$
 |M_i^{(m)}|=2s+2.
$

To count the base-factor occurrences, define
$
 c_i(q)=|\Occ_{t_q}(M_i^{(4)})|.
$
Complementation pairs the eight factors as
\[
(1,6),\qquad(2,7),\qquad(3,8),\qquad(4,5).
\]
Write $\bar i$ for the index paired with $i$.  Since
$
 t_{q+1}=t_q\overline{t_q},
$
all occurrences wholly contained in the two halves contribute
$
 c_i(q)+c_{\bar i}(q).
$
Only occurrences crossing the central join have to be added separately.  Inspecting the unique phase at that join gives
$
 c_i(q+1)=c_i(q)+c_{\bar i}(q)+\delta_i(q),
$
where the crossing correction is determined by parity: when $q$ is even, exactly the factors $M_1^{(4)},M_2^{(4)},M_3^{(4)}$ acquire one crossing occurrence; when $q$ is odd, the relevant crossing length-four factors are $M_4^{(4)}$, $abba$, and $M_5^{(4)}$.  The factor $abba$ does not belong to the MUS octuple, so only indices $4$ and $5$ receive the corresponding contribution.

Starting from $c_i(4)=1$ and using the complement pairs, a short induction gives the closed form
\[
 c_i(q)=
 \begin{cases}
 \dfrac{2^{q-2}-1}{3},
   & q\text{ even},\\[1.1ex]
 \dfrac{2^{q-2}+1}{3},
   & q\text{ odd and }i\in\{1,2,3\},\\[1.1ex]
 \dfrac{2^{q-2}-2}{3},
   & q\text{ odd and }i\in\{4,5,6,7,8\}.
 \end{cases}
\]

The second geometric fact is separation.  At order four, two distinct occurrences of the same $M_i^{(4)}$ start at least six positions apart.  This can be checked in $t_5$: every factor that could witness a smaller distance has length at most nine, and the Thue--Morse factor set of that length is already present there.  Synchronization then preserves the obstruction in all later orders.  Scaling the starts by $s$ yields the following summary.

\begin{lemma}[MUS coverage geometry]
\label{lem:mus-coverage-geometry-extended}
Let $4\le m\le n$, put $s=2^{m-4}$ and $q=n-m+4$.  Then
\[
 \Occ_{t_n}(M_i^{(m)})
   =s\,\Occ_{t_q}(M_i^{(4)}),
 \qquad
 |M_i^{(m)}|=2s+2,
\]
distinct occurrence starts differ by at least $6s$, and their number is $c_i(q)$ as given above.
\end{lemma}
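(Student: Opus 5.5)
The lemma has four assertions: the scaled occurrence formula, the length formula, the $6s$ separation, and the count $c_i(q)$. I would prove them in that order, reducing each to the base order four through Lemma~\ref{lem:mus-lifting}.

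\emph{Scaling and length.} Apply the doubling identity~\eqref{eq:occ-doubling} $m-4$ times. At each step the source generation and the ambient generation drop by one together, so the difference $n-m$ is preserved. Writing $q=n-m+4$, induction on $m$ gives
\[
\Occ_{t_n}(M_i^{(m)})=2\,\Occ_{t_{n-1}}(M_i^{(m-1)})=\cdots=2^{m-4}\,\Occ_{t_q}(M_i^{(4)}).
\]
The length formula follows from Theorem~\ref{thm:kishi-mus}: $|M_i^{(m)}|=2^{m-3}+2=2s+2$.

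\emph{Separation.} Since the scaling multiplies all differences by $s$, it suffices to show one base fact. Two distinct starts of $M_i^{(4)}$ in any $t_q$ with $q\ge4$ differ by at least $6$. Suppose instead that two occurrences start at $p<p'$ with $p'-p\le5$. Then $t_q[p..p'+3]$ is a factor of length at most $9$ containing two occurrences of the same length-four word at distance at most $5$. Every length-$9$ factor of the infinite Thue--Morse word already occurs in $t_5$, or at worst in $t_6$. So it suffices to inspect the finite lists in Table~\ref{tab:base-occurrences}, where all gaps within a row are at least $6$. The remaining point is that every factor of length at most nine of $t_q$ appears in $t_6$. I would justify this via the synchronization lemma: desubstitution sends a factor of length at most $9$ into a factor of length at most $6$ of $t_{q-1}$, and iterating lands in a bounded base word.

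\emph{Counting.} Set $c_i(q)=|\Occ_{t_q}(M_i^{(4)})|$ and use $t_{q+1}=t_q\overline{t_q}$. Occurrences inside the left half contribute $c_i(q)$. Occurrences inside the right half contribute $c_{\bar i}(q)$, because complementation pairs the indices as $(1,6),(2,7),(3,8),(4,5)$. Occurrences crossing the join are among the three length-four windows straddling position $2^q\mid2^q+1$, giving the correction $\delta_i(q)$. The prefix of $t_q$ is $abba$. Its suffix is $abba$ for even $q$ and $baab$ for odd $q$, hence the right half begins with $baab$. The crossing windows are therefore:
\begin{itemize}
\item for even $q$, $bbab,baba,abaa$, that is $M_1,M_2,M_3$;
\item for odd $q$, $aabb,abba,bbaa$, that is $M_4$, a non-MUS, and $M_5$.
\end{itemize}
From $c_i(4)=1$ I would then prove the closed form by induction on $q$, tracking the sums $c_i+c_{\bar i}$ class by class. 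The step I expect to need most care is the bookkeeping for odd $q$. There the pair $(4,5)$ gets a correction while the classes $\{1,2,3\}$ and $\{6,7,8\}$ are paired across, so one must check that $c_i+c_{\bar i}$ depends only on $q$. This holds because each complement pair takes exactly one value from each formula branch at the preceding parity. I would verify the small cases against the $t_5$ and $t_6$ columns of Table~\ref{tab:base-occurrences}: the counts $3,2$ and $5$ there confirm the initial values and fix the constants in the formula.
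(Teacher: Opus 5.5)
Your proposal is correct and follows the paper's own argument: iterate the lifting identity \eqref{eq:occ-doubling}, take the length from Theorem~\ref{thm:kishi-mus}, reduce the $6s$ separation to a finite check on factors of length at most nine, and count via $t_{q+1}=t_q\overline{t_q}$ with the parity-dependent join corrections $\delta_i(q)$. One small correction to your bookkeeping: at odd $q$ the quantity independent of $i$ is $c_i(q)+c_{\bar i}(q)+\delta_i(q)$, not $c_i(q)+c_{\bar i}(q)$, because the pair $(4,5)$ takes both values from the second branch, and its deficit of $1$ relative to the other pairs is exactly made up by $\delta_4(q)=\delta_5(q)=1$.
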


A useful picture follows immediately.  The coverage of a generation-$m$ MUS is a disjoint union of intervals of length $2s+2$, while consecutive components are separated by gaps of at least
$
 6s-(2s+2)=4s-2.
$
Thus increasing the source generation makes each individual component longer but reduces the number of components.  The antichain proof is precisely a comparison between these two effects.

\subsection{Why the MUS-derived coverages are incomparable}
\label{subsec:mus-antichain}

We first compare two coverages arising from MUSs.

Within a fixed generation $m$, all components have the same length.  If
$
 \Cov_{t_n}(M_i^{(m)})
   \subseteq
 \Cov_{t_n}(M_j^{(m)}),
$
then every component of the left-hand side must coincide with a component of the right-hand side.  In particular the two distinct equal-length factors would occur at the same starting position, which is impossible.  Hence distinct MUS coverages from the same generation are incomparable.

Now compare generations $m$ and $m+d$, where $d\ge1$.  A component from the later source generation has length
$
 2^{d+1}s+2,
$
strictly larger than the earlier length $2s+2$.  Therefore a later-generation component cannot be contained in the earlier-generation coverage, whose components are separated by positive gaps.  This rules out one direction of inclusion.

For the reverse direction we count how many earlier components can fit inside a later component.  Their starts are separated by at least $6s$, while the available range for their starts inside an interval of length $2^{d+1}s+2$ is
$
 2s(2^d-1).
$
Hence one later component can contain at most
$
 K_d
 =1+\left\lfloor\frac{2s(2^d-1)}{6s}\right\rfloor
 =\left\lceil\frac{2^d}{3}\right\rceil
$
earlier components.

Put
$
 c_{\min}(r)=\min_i c_i(r),
 \qquad
 c_{\max}(r)=\max_i c_i(r).
$
The closed formulas imply
$
 c_{\min}(q+d)>K_d\,c_{\max}(q).
$
One way to see the strict inequality is to use
\[
 c_{\min}(r)\ge\frac{2^{r-2}-2}{3},
 \qquad
 c_{\max}(r)\le\frac{2^{r-2}+1}{3},
 \qquad
 K_d\le\frac{2^d+2}{3}.
\]
These estimates settle every case except $(q,d)=(4,1)$; for that boundary case the exact values are
\[
 c_{\min}(5)=2,
 \qquad
 K_1=1,
 \qquad
 c_{\max}(4)=1.
\]
Thus the earlier-generation coverage has more components than all later-generation components could collectively contain.  The reverse inclusion is impossible as well.

We have therefore proved that all MUS-derived coverages appearing in $\mathcal D_n$ form an antichain.

\subsection{The two homogeneous-pair constraints}
\label{subsec:aa-bb-antichain}

It remains to incorporate $\Cov_{t_n}(aa)$ and $\Cov_{t_n}(bb)$.  These two constraints look different from the MUS hierarchy, but their incomparability is especially simple.

First, no MUS coverage can be contained in either homogeneous-pair coverage.  Every Thue--Morse MUS contains both letters, whereas $\Cov_{t_n}(aa)$ contains only positions occupied by $a$ and $\Cov_{t_n}(bb)$ contains only positions occupied by $b$.

Conversely, fixed homogeneous pairs near the beginning of the word witness the opposite non-inclusions.  For every $n\ge5$,
\[
 t_n[2..3]=bb,
 \qquad
 t_n[6..7]=aa.
\]
For source generation $m\ge6$, the first occurrence of every $M_i^{(m)}$ starts at or after position $8$, so its coverage misses both of these fixed pairs.  The two low generations $m=4,5$ are checked directly from Table~\ref{tab:base-occurrences}; when a later $aa$ witness is needed one can use the fixed occurrences at $[10,11]$ or $[18,19]$.

Finally,
$
 \Cov_{t_n}(aa)\cap\Cov_{t_n}(bb)=\varnothing,
$
because a position cannot simultaneously carry $a$ and $b$.  Thus the two homogeneous-pair coverages are incomparable with one another and with every MUS-derived coverage.

\begin{lemma}[Canonical antichain]
\label{lem:canonical-antichain-extended}
For every $n\ge5$, the members of $\mathcal D_n$ are pairwise incomparable under inclusion.
\end{lemma}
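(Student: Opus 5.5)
The lemma collects the three comparisons already carried out in Subsections~\ref{subsec:mus-antichain} and~\ref{subsec:aa-bb-antichain}, so the proof splits by the type of the two members $C\neq D$ of $\mathcal D_n$: (i) both are MUS-derived, (ii) one is MUS-derived and the other is $\Cov_{t_n}(aa)$ or $\Cov_{t_n}(bb)$, and (iii) $C=\Cov_{t_n}(aa)$ and $D=\Cov_{t_n}(bb)$. Before anything else I would check that distinct factors listed in $\mathcal D_n$ give distinct sets, since otherwise ``antichain'' would not even be well posed. This will fall out of case (i): the argument there rules out inclusion in both directions, hence also rules out equality.

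For case (i) I would rely on Lemma~\ref{lem:mus-coverage-geometry-extended}. Each coverage $\Cov_{t_n}(M_i^{(m)})$ is a disjoint union of $c_i(q)$ intervals, each of length $2s+2$, separated by gaps of length at least $4s-2\ge 2$. This uses $6s>2s+2$, which holds since $s\ge1$, so the components really are separated. Within a single generation, any inclusion would force a component of the left-hand side to equal a component of the right-hand side. That would give two distinct MUSs of equal length starting at the same position, which is impossible. Across generations $m<m+d$, a later component of length $2^{d+1}s+2$ is longer than any earlier component, so it cannot lie inside the earlier coverage. In the other direction, each later component contains at most $K_d=\lceil 2^d/3\rceil$ earlier components. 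Inclusion of the earlier coverage in the later one would therefore require $c_{\min}(q+d)\le K_d\,c_{\max}(q)$, which contradicts the strict inequality derived from the closed formulas. The only delicate arithmetic is the boundary pair $(q,d)=(4,1)$, which I would verify explicitly using $c_{\min}(5)=2>1$.

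For cases (ii) and (iii) I would use letter content together with fixed witnesses. Every position in $\Cov(aa)$ carries the letter $a$, while every MUS contains both letters, so no MUS coverage lies inside $\Cov(aa)$; the same holds for $\Cov(bb)$. The sets $\Cov(aa)$ and $\Cov(bb)$ are disjoint and nonempty, which settles (iii). For the opposite non-inclusions in (ii), I would exhibit one position of $\Cov(aa)$, and one of $\Cov(bb)$, that the MUS coverage misses. For $m\ge6$, the scaled starts $s\cdot\Occ$ with $s\ge4$ and base starts at least $2$ are at least $8$, so positions $2,3$ (the pair $bb$) and $6,7$ (the pair $aa$) are missed. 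The main obstacle is the low generations $m=4,5$. There I would list the first occurrences, using $s\cdot\Occ_{t_q}$ with the initial segment $t_n[1..32]=t_5$, which is fixed for all $n\ge5$, and check against Table~\ref{tab:base-occurrences}. The $aa$ witness is chosen from the occurrences at $[6,7]$, $[10,11]$, $[18,19]$, and the $bb$ witness from $[2,3]$, $[12,13]$. I expect a few small MUSs (for example $bbab$ at $2$, or $aabb$ at $6$) to require picking a later homogeneous pair not covered by any of their occurrences. Since these occurrence lists are finite and independent of $n$, this amounts to a short table check.
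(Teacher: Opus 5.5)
Your plan is essentially the paper's argument. It uses the same steps: component matching within a generation; across generations, the length comparison and the packing bound $K_d=\lceil 2^d/3\rceil$ against the closed-form counts, with the exact check at $(q,d)=(4,1)$; letter content for $\Cov_{t_n}(M)\not\subseteq\Cov_{t_n}(aa),\Cov_{t_n}(bb)$; and fixed early homogeneous pairs for the converse non-inclusions, with starts $\ge 8$ when $m\ge6$ and a finite check when $m=4,5$.

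One correction to that finite check: $t_n[12..13]=ba$, so $[12,13]$ is not a $bb$ witness. The only MUS that needs a later $bb$ witness is $M_1^{(4)}=bbab$, whose coverage contains $[2,5]$ and $[14,17]$; position $8$, from $t_n[8..9]=bb$, serves.
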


\subsection{The canonical minimal-coverage theorem}
\label{subsec:minimal-coverage-theorem}

We can now combine exhaustion and irredundancy.  The proof is short because all of the work has been isolated in the preceding lemmas.

\begin{theorem}[Minimal factor coverages of Thue--Morse words]
\label{thm:minimal-coverages-extended}
For every $n\ge5$,
\[
 \Hmin(t_n)=
 \{\Cov_{t_n}(aa),\Cov_{t_n}(bb)\}
 \cup
 \bigcup_{m=4}^{n}
 \{\Cov_{t_n}(M):M\in\MUS(t_m)\}.
\]
All displayed coverages are distinct.  Consequently,
$
 |\Hmin(t_n)|=2+8(n-3)=8n-22.
$
\end{theorem}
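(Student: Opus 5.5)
The plan is to combine the two halves already isolated in this section: exhaustion (Lemma~\ref{lem:possible-minimal-representatives}) and irredundancy (Lemma~\ref{lem:canonical-antichain-extended}). The only genuinely new point is the logical step from ``$\Hmin(t_n)\subseteq\mathcal D_n$ and $\mathcal D_n$ is an antichain'' to the equality $\Hmin(t_n)=\mathcal D_n$. An antichain contained in $\mathcal H(t_n)$ is not automatically contained in $\Hmin(t_n)$, so this step needs an argument.

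\emph{Reverse inclusion.} Take $D\in\mathcal D_n$. By construction $D\in\mathcal H(t_n)$: each displayed word is a factor of $t_n$, since $aa,bb$ occur at $[6,7]$ and $[2,3]$, and each $M\in\MUS(t_m)$ with $m\le n$ occurs in the prefix $t_m$ of $t_n$. Because $\mathcal H(t_n)$ is finite, $D$ contains some inclusion-minimal $C\in\Hmin(t_n)$. By Lemma~\ref{lem:possible-minimal-representatives}, $C\in\mathcal D_n$. Then $C\subseteq D$ with both in $\mathcal D_n$, and Lemma~\ref{lem:canonical-antichain-extended} says distinct members of $\mathcal D_n$ are incomparable, so $C=D$. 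Hence $D\in\Hmin(t_n)$, and together with exhaustion this gives the equality.

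\emph{Counting.} Here I must check that the listed coverages are pairwise distinct as sets, not only as factors. Incomparability in the antichain lemma was proved as strict non-inclusion between distinct factors: same-generation MUSs have distinct unique starts, components have different lengths across generations, and $aa$, $bb$ have disjoint letter-supports. So distinct labels give distinct sets. By Theorem~\ref{thm:kishi-mus} each generation $m\in\{4,\dots,n\}$ contributes exactly eight MUSs. Factors from different generations differ in length ($2^{m-3}+2$), so the word lists are disjoint. This gives $2+8(n-3)=8n-22$ distinct coverages.

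The main obstacle is the irredundancy input, in particular the cross-generation counting inequality $c_{\min}(q+d)>K_d\,c_{\max}(q)$ and the low-generation checks involving $aa,bb$ for $m=4,5$. Both are already packaged in Lemma~\ref{lem:canonical-antichain-extended}, so in the theorem itself I will only make sure that lemma is applied to \emph{distinct labels}. That is where the distinctness claim comes from, and it is why the equality argument above needs no separate treatment of coincident coverages.
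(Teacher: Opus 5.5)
Your proposal is correct and is essentially the paper's proof: exhaustion comes from Lemma~\ref{lem:possible-minimal-representatives}; for each $D\in\mathcal D_n$ you take a minimal $C\subseteq D$, which lies in $\mathcal D_n$ and so equals $D$ by Lemma~\ref{lem:canonical-antichain-extended}; the count $2+8(n-3)$ then uses eight MUSs per generation, with distinctness read off from the antichain property. Your extra remarks, namely that each member of $\mathcal D_n$ genuinely lies in $\mathcal H(t_n)$ and that the antichain lemma must be applied to distinct labels to yield distinct sets, only make explicit what the paper's proof uses tacitly.
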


\begin{proof}
Lemma~\ref{lem:possible-minimal-representatives} gives
$
 \Hmin(t_n)\subseteq\mathcal D_n.
$
Take any $D\in\mathcal D_n$.  Since $\mathcal H(t_n)$ is finite, $D$ contains some inclusion-minimal member $C\in\Hmin(t_n)$.  By the inclusion above, $C\in\mathcal D_n$.  Lemma~\ref{lem:canonical-antichain-extended} says that $\mathcal D_n$ is an antichain, so $C\subseteq D$ forces $C=D$.  Hence every member of $\mathcal D_n$ is inclusion-minimal and
$
 \Hmin(t_n)=\mathcal D_n.
$
There are eight MUSs in each generation $m\ge4$, and the antichain property implies that all displayed coverages are distinct.  The count is therefore
$
 2+8(n-3)=8n-22.
$
\end{proof}

The theorem explains the linear sequence of minimal-coverage counts visible in the finite computations:
\[
 18,\ 26,\ 34,\ 42,\ 50,\ldots
\]
for $n=5,6,7,8,9,\ldots$.  More importantly, it provides an exact structural reason for that sequence: every new Thue--Morse generation contributes exactly eight new incomparable MUS-derived constraints.

\begin{corollary}[Exact certificate for arbitrary attractors]
\label{cor:arbitrary-attractors-extended}
Let $n\ge5$ and let $\Gamma\subseteq[1,2^n]$ be arbitrary.  Then $\Gamma$ is a string attractor of $t_n$ if and only if it intersects
\[
 \Cov_{t_n}(aa),\qquad \Cov_{t_n}(bb),
\]
and every coverage
\[
 \Cov_{t_n}(M),
 \qquad
 M\in\MUS(t_m),\quad 4\le m\le n.
\]
\end{corollary}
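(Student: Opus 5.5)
This corollary follows directly by combining Proposition~\ref{prop:minimal-coverage-reduction} with Theorem~\ref{thm:minimal-coverages-extended}. The plan is simply to chain the two equivalences.

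Fix $n\ge5$ and an arbitrary $\Gamma\subseteq[1,2^n]$.

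\emph{Step 1.} By Proposition~\ref{prop:minimal-coverage-reduction}, $\Gamma$ is a string attractor of $t_n$ if and only if $\Gamma\cap C\ne\varnothing$ for every $C\in\Hmin(t_n)$. This reduction holds for every finite word and every position set, with no cardinality hypothesis. That is exactly why the resulting certificate applies to attractors of arbitrary size, in contrast with Corollaries~\ref{cor:sixteen-certificate} and~\ref{cor:three-generation}.

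\emph{Step 2.} Substitute the identity $\Hmin(t_n)=\mathcal D_n$ from Theorem~\ref{thm:minimal-coverages-extended}. Then the right-hand side of Step 1 becomes the requirement that $\Gamma$ meets $\Cov_{t_n}(aa)$, $\Cov_{t_n}(bb)$, and every $\Cov_{t_n}(M)$ with $M\in\MUS(t_m)$, $4\le m\le n$. This is precisely the listed condition.

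\emph{Step 3.} For robustness, note which half of Theorem~\ref{thm:minimal-coverages-extended} each direction actually uses.
\begin{itemize}
\item The forward implication needs nothing from that theorem. Each listed set is the coverage of a genuine factor of $t_n$: $aa$ and $bb$ occur for $n\ge5$, and $\MUS(t_m)$ consists of factors of $t_m$, which is a prefix of $t_n$. So an attractor hits each of them by definition.
\item The converse needs only the exhaustion half, $\Hmin(t_n)\subseteq\mathcal D_n$ (Lemma~\ref{lem:possible-minimal-representatives}). A set hitting all of $\mathcal D_n$ hits every minimal coverage, and Step 1 then applies.
\end{itemize}
The antichain Lemma~\ref{lem:canonical-antichain-extended} is not needed for the equivalence itself. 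It only adds that the system is irredundant, so none of the $8n-22$ constraints can be dropped.

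The only substantive work is therefore already contained in Lemma~\ref{lem:possible-minimal-representatives}, which the corollary takes as given. I expect no real obstacle here beyond checking that the listed factors indeed occur in $t_n$ for $n\ge5$.
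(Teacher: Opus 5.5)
Your proposal is correct and is exactly the argument the paper intends: the corollary is Proposition~\ref{prop:minimal-coverage-reduction} combined with the identity $\Hmin(t_n)=\mathcal D_n$ from Theorem~\ref{thm:minimal-coverages-extended}. Your refinement is also accurate. The converse direction needs only the exhaustion inclusion of Lemma~\ref{lem:possible-minimal-representatives}, and the forward direction needs only that each listed set is the coverage of a genuine factor of $t_n$; the antichain lemma contributes irredundancy, not the equivalence itself.
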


This corollary should be contrasted with Corollary~\ref{cor:three-generation}.  For arbitrary attractors the exact canonical system has $8n-22$ constraints; once the cardinality is fixed to the optimum value four, three recent MUS generations already suffice, and the proof itself uses only sixteen witnesses.

\subsection{Why the certificate is irredundant}
\label{subsec:canonical-irredundancy}

The word ``canonical'' can be made precise at the level of coverage sets.  The following observation is completely general and does not use the Thue--Morse structure.

\begin{lemma}[Canonical irredundancy]
\label{lem:canonical-irredundancy-extended}
Let $w$ be a finite word and let $\mathcal F\subseteq\mathcal H(w)$ satisfy
\[
 \Gamma\text{ is an attractor of }w
 \quad\Longleftrightarrow\quad
 \Gamma\cap C\ne\varnothing
 \text{ for every }C\in\mathcal F
\]
for every position set $\Gamma$.  Then
$
 \Hmin(w)\subseteq\mathcal F.
$
\end{lemma}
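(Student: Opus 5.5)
We argue by contradiction: take $C\in\Hmin(w)$ with $C\notin\mathcal F$, and build a position set $\Gamma$ that meets every member of $\mathcal F$ but misses $C$. Since $C$ is the coverage of a factor, missing $C$ means $\Gamma$ is not an attractor. This contradicts the assumed equivalence, so $C\in\mathcal F$.

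To build $\Gamma$, set $\Gamma=[1,|w|]\setminus C$, the complement of $C$ in the position universe. Then $\Gamma\cap C=\varnothing$ by construction, so $\Gamma$ does not attract any factor whose coverage is $C$. It remains to check that $\Gamma$ meets every $D\in\mathcal F$. Suppose some $D\in\mathcal F$ had $\Gamma\cap D=\varnothing$. Then $D\subseteq C$. Because $D\in\mathcal F\subseteq\mathcal H(w)$ and $C$ is inclusion-minimal in $\mathcal H(w)$, this forces $D=C$, and hence $C\in\mathcal F$, contrary to our assumption. So every $D\in\mathcal F$ satisfies $D\not\subseteq C$, and therefore meets $\Gamma$.

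The hypothesis on $\mathcal F$ now says $\Gamma$ is an attractor. But $\Gamma$ misses $C=\Cov_w(x)$ for some nonempty factor $x$, so by the definition of string attractor it is not one. This contradiction gives $\Hmin(w)\subseteq\mathcal F$.

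There is no real obstacle here; the argument is purely order-theoretic. The only points needing care are that the hypothesis must hold for every position set $\Gamma$, including ones that are not small, and that every member of $\mathcal F$ is itself a factor coverage, which is what lets minimality of $C$ rule out $D\subsetneq C$. The set $\Gamma$ may be empty, namely when $C=[1,|w|]$; the argument still goes through, since then every $D\in\mathcal F$ would be contained in $C$ and so equal to it. Combined with Theorem~\ref{thm:minimal-coverages-extended}, this shows that any exact coverage certificate for attractors of $t_n$ must contain all $8n-22$ canonical constraints.
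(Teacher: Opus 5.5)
Your proof is correct and takes essentially the same route as the paper. Both arguments use $\mathcal F\subseteq\mathcal H(w)$ and the minimality of $C$ to get $D\not\subseteq C$ for every $D\in\mathcal F$, then exhibit a set that meets all of $\mathcal F$ but misses $C$; the only difference is that the paper picks one point $p_D\in D\setminus C$ per constraint, whereas you take the whole complement $[1,|w|]\setminus C$, which avoids making any choices.
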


\begin{proof}
Take $C\in\Hmin(w)$ and suppose that $C\notin\mathcal F$.  For every $D\in\mathcal F$, we must have $D\nsubseteq C$; otherwise $D\subseteq C$ and minimality of $C$ would imply $D=C$.  Choose one position
$
 p_D\in D\setminus C
$
for every $D\in\mathcal F$, and put
$
 \Gamma=\{p_D:D\in\mathcal F\}.
$
Then $\Gamma$ hits every member of $\mathcal F$ but misses $C$.  Hence $\Gamma$ is not an attractor, contradicting the assumed equivalence.
\end{proof}

Combining this lemma with Theorem~\ref{thm:minimal-coverages-extended} shows that the $8n-22$ constraints are not merely one convenient exact certificate: every exact certificate consisting of factor coverages must contain all of them.  Thus they form the unique minimum exact certificate \emph{at the level of coverage sets}.  The qualification is important because two different factor names could, in principle, induce the same coverage.

\subsection{The resulting hierarchy of certificates}
\label{subsec:certificate-hierarchy}

The two main parts of the paper can now be summarized by a simple hierarchy:
\begin{center}
\fbox{all attractors}
$\Longleftrightarrow$
\fbox{$8n-22$ canonical minimal coverages}
\end{center}
while under the minimum-cardinality restriction,
\begin{center}
\fbox{$|\Gamma|=4$}
$\Longrightarrow$
\fbox{24 MUS constraints}
$\Longrightarrow$
\fbox{16 forcing witnesses}.
\end{center}

The first level is canonical and irredundant but grows linearly with $n$.  The second and third levels are constant-size because the cardinality constraint itself supplies additional combinatorial information.  This distinction is useful beyond the particular formulas: it separates the intrinsic complexity of the attractor property from the much more rigid geometry of optimum solutions.


\section{The exceptional orders four and five}
\label{sec:exceptional-orders}

The stable theorem starts at order six.  This is not merely a technical threshold introduced by the proof.  At order five most of the stable geometry is already visible, but one additional family of smallest attractors survives; at order four the solution set is more irregular and is better described by a compact system of constraints than by a short parametric formula.

These two small orders are useful for two reasons.  First, they complete the classification for every $n\ge4$, the range in which $\gamma(t_n)=4$.  Second, they show where the stable geometry comes from: the two large families are already present at $n=5$, while the extra solutions disappear as soon as the recursive separation between the relevant coverage intervals becomes large enough.

\subsection{Order five: the stable families plus one exceptional bridge}
\label{subsec:t5-exceptional}

For $n=5$ we have $h=2^{5-3}=4$.  Substituting this value into the two stable formulas gives two $16$-element families $\FL_5$ and $\FR_5$.  They are not yet exhaustive.  Define
\[
 \mathcal E_5=
 \bigl\{\{8+\alpha,14+\eta,18+\eta,24+\delta\}:
       \alpha,\eta,\delta\in\{0,1\}\bigr\}.
\]
The same offset $\eta$ is used in the two central positions.  Thus $\mathcal E_5$ contains eight sets rather than sixteen.

\begin{proposition}[The exceptional order five]
\label{prop:t5-extended}
The smallest attractors of $t_5$ are exactly
$
 \Acal(t_5)=\FL_5\cup\mathcal E_5\cup\FR_5.
$
The three families are pairwise disjoint.  Consequently,
$
 |\Acal(t_5)|=16+8+16=40.
$
\end{proposition}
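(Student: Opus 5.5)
The proof has two directions for $t_5$ (length $32$, $h=4$): sufficiency of each listed set, then necessity via a hitting-set argument on explicit coverages. Both use $\gamma(t_5)=4$ from Kutsukake et al., so $\Acal(t_5)$ is the set of four-position attractors.

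\emph{Sufficiency.} For $\FL_5$ the argument of Lemma~\ref{lem:paired-boundaries} applies verbatim. The centered-occurrence property (Theorem~\ref{thm:centered-preview}) holds for all $n\ge4$, so every factor of length at least two is attracted. The one-letter check uses the homogeneous boundaries at $3h,4h$, since $n-3=2$ is even, giving the boundaries at $2h$ and $6h$. $\FR_5$ then follows from Lemma~\ref{lem:reflection-family}, whose reflection computation does not use $n\ge6$.

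For $\mathcal E_5$ no boundary argument is available, so I would verify directly. By Proposition~\ref{prop:minimal-coverage-reduction} and Theorem~\ref{thm:minimal-coverages-extended} with $n=5$, it suffices to check that each of the eight sets meets the $18$ coverages $\Cov_{t_5}(aa)$, $\Cov_{t_5}(bb)$, the eight $\Cov_{t_5}(M_i^{(4)})$ and the eight $\Cov_{t_5}(M_i^{(5)})$. All of these are read off from Table~\ref{tab:base-occurrences} (order-four starts in $t_5$) and from the doubled starts (the $M_i^{(5)}$ start at $4,6,8,12,16,20,22,24$ with length $6$). In $t_5$ the positions $8,9$ and $24,25$ carry $aa$ or $bb$ boundaries, which settles the one-letter and homogeneous constraints. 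The check reduces to a small table, and the coupling $\eta$ is explained by pairs such as $14$ versus $19$ being excluded by a specific coverage. I would record which coverage kills the uncoupled choices $\{14,19\}$ and $\{15,18\}$, since this also matters for necessity.

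\emph{Necessity.} This is the main obstacle, because the stable separation argument of Lemma~\ref{lem:sixteen-witness} degenerates at $h=4$: the gap $h/4-1$ equals $0$. I would first try to find four pairwise disjoint members of $\Hmin(t_5)$ to fix one position per region, analogous to $A_2,B_7,B_2,A_7$. For example, $A_2=[6,11]$ and $A_7=[22,27]$ remain intervals, while $B_2$ and $B_7$ are now order-four coverages $\{[3,6],[15,18],[27,30]\}$ and $\{[11,14],[19,22]\}$, which touch $A_2$ and $A_7$ at single points. I would then split into cases according to whether a position sits on such a shared endpoint ($6,11,22,27$). Next I would rerun the left/right case analysis of Lemma~\ref{lem:sixteen-witness} with $h=4$, also using $\Cov(aa)$, $\Cov(bb)$ and the remaining $B_i$. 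The generic cases reproduce $\FL_5,\FR_5$. The case in which $s$ and $r$ lie in the middle components $[11,14]$ or $[19,22]$ and $[15,18]$ but the stable forcing of $r\in\{16,17\}$ fails is where $\mathcal E_5$ arises, with positions $14+\eta$ and $18+\eta$ hitting $A_4=[12,17]$ and $A_5=[16,21]$ together.

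If the hand case split becomes unwieldy, I would fall back on a direct exhaustive check of all $\binom{32}{4}$ sets against the $18$ explicit coverages, consistent with the computational verification the paper mentions. Disjointness of the three families is immediate: $\FL_5$ meets $\{12,13\}$, $\FR_5$ meets $\{20,21\}$, and $\mathcal E_5$ meets neither. Counting the families then gives $16+8+16=40$.
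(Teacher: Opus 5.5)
Your proposal is correct, and for $\mathcal E_5$ and for necessity it follows the paper's own argument. The paper also reduces to the eighteen inclusion-minimal masks of $t_5$ (Theorem~\ref{thm:minimal-coverages-extended} at $n=5$, listed in Appendix~\ref{app:finite-tables}). It then simply asserts the outcome of the finite transversal analysis in both directions.

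The real difference is that you obtain $\FL_5$ and $\FR_5$ structurally, by rerunning Lemmas~\ref{lem:paired-boundaries} and~\ref{lem:reflection-family} at $h=4$. This is valid, since the centered-occurrence property is stated for $n\ge4$ and $K_5=\{8,12,16,24\}=\{2h,3h,4h,6h\}$. It explains why those $32$ sets work. However, it makes that half depend on the centered-occurrence theorem, whereas the mask check is self-contained and treats all $40$ sets uniformly.

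Correct one slip in your one-letter check. For $n=5$ the homogeneous boundaries are $8\mid9$ ($bb$) and $24\mid25$ ($aa$). The pairs at $3h=12$ and $4h=16$ read $ba$ and $ab$, so they are not homogeneous.

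For necessity you do not need brute force, but the $A_2,B_7,B_2,A_7$ skeleton is the wrong one at $h=4$. Use instead the pairwise disjoint masks $[4,9]$, $[12,17]$, $[24,29]$ (coverages of $abaabb$, $babbab$, $aabbab$), which leaves one free point $y$.

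\emph{Case $y\le11$.} The masks $[20,25]$ and $[16,21]$ force points into $\{24,25\}$ and $\{16,17\}$. Then $\Cov_{t_5}(abab)$ forces $y=11$, and $\Cov_{t_5}(babb)$ is missed.

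\emph{Case $y\ge22$.} The masks $[6,11]$, $[16,21]$, $[8,13]$ force points into $\{8,9\}$ and $\{16,17\}$. Then $\Cov_{t_5}(abab)$ forces $y=22$, and $\Cov_{t_5}(aaba)$ is missed.

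\emph{Remaining case.} Both middle points lie in $[12,21]$, and $[6,11]$ and $[22,27]$ put the outer points in $[6,9]$ and $[24,27]$. Now $[16,21]$ together with the coverages of $abab$, $aaba$, $babb$ leaves two possibilities.
\begin{enumerate}[label=(\alph*),leftmargin=*]
\item One middle point lies in $\{16,17\}$ and the other in $\{12,13,20,21\}$. Then $[8,13]$, $[20,25]$, $\Cov_{t_5}(aa)$ and $\Cov_{t_5}(bb)$ give exactly $\FL_5\cup\FR_5$.
\item The middle points are $u\in[12,15]$ and $v\in[18,21]$. The coverages of $bbaa$ and $bbab$ together with $[8,13]$ rule out a left point in $\{6,7\}$. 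Next, the coverages of $abaa$ and $bbab$ and the mask $[20,25]$ force $v\in\{18,19\}$, a right point in $\{24,25\}$, and $u\in\{14,15\}$. Finally, the coverages of $baba$ and $abab$ leave only $(u,v)\in\{(14,18),(15,19)\}$, which is exactly $\mathcal E_5$.
\end{enumerate}
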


There is a useful geometric way to read this statement.  Every optimum solution contains one position in the left outer pair
$
 \{8,9\}
$
and one position in the right outer pair
$
 \{24,25\}.
$
The remaining two positions lie in the central part of the word and can occur in exactly three ways:
\[
 \{12,13\}\times\{16,17\},
 \qquad
 \{16,17\}\times\{20,21\},
\]
or as one of the two correlated pairs
\[
 (14,18),\qquad(15,19).
\]
The first and second rectangles give $\FL_5$ and $\FR_5$, respectively.  The two correlated pairs give $\mathcal E_5$.

This makes the transition to the stable regime particularly transparent.  From order six onward the central intervals separate sufficiently to eliminate the correlated bridge represented by $\mathcal E_5$.  What remains are precisely the two reflected families already visible at order five.

The proof of Proposition~\ref{prop:t5-extended} is finite but still structural.  The inclusion-minimal coverage system of $t_5$ consists of eighteen masks: the eight MUS-derived coverages from generation five, the eight corresponding coverages from generation four, and the two homogeneous-pair coverages $\Cov_{t_5}(aa)$ and $\Cov_{t_5}(bb)$.  Hitting these eighteen masks forces the two outer choices and one of the ten allowed central pairs above.  Conversely every set described by those choices hits all eighteen masks.  The complete table is recorded in Appendix~\ref{app:finite-tables}.

\subsection{Order four: a finite but genuinely less regular geometry}
\label{subsec:t4-exceptional}

The word
$
 t_4=abbabaabbaababba
$
has length sixteen.  Here a short family description analogous to $\FL_n$ and $\FR_n$ would hide rather than clarify the geometry.  A compact Boolean characterization is more informative.

Every smallest attractor has one position in the left region $[2,5]$, two positions in the central region $[6,11]$, and one position in the right region $[12,15]$.  Write it as
$
 \Gamma=\{\ell,u,v,r\},
$
with
\[
 2\le\ell\le5,
 \qquad
 6\le u<v\le11,
 \qquad
 12\le r\le15.
\]
The remaining restrictions are exactly the following ten clauses.

\begin{proposition}[The exceptional order four]
\label{prop:t4-extended}
A four-position set $\Gamma$ is a smallest attractor of $t_4$ if and only if it has the form above and satisfies
\begin{align*}
 &\ell\ge3\ \lor\ u=6,
 &&\ell\ge4\ \lor\ u\le7,
 &&u\le9,\\
 &v\ge8,
 &&v\ge10\ \lor\ r\le13,
 &&v=11\ \lor\ r\le14,\\
 &\ell\ge3\ \lor\ r\le14,
 &&u\le7\ \lor\ v\ge10,\\
 &\ell\le3\ \lor\ \{u,v\}\cap\{8,9\}\ne\varnothing\ \lor\ r\ge14,\\
 &\ell\ge4\ \lor\ u=6\ \lor\ v=11\ \lor\ r\le13.
\end{align*}
Exactly $87$ quadruples satisfy these conditions.
\end{proposition}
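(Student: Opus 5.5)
The plan is to reduce the statement to a finite hitting-set computation on the minimal coverage system of $t_4$. By Proposition~\ref{prop:minimal-coverage-reduction}, a set $\Gamma$ is an attractor exactly when it meets every member of $\Hmin(t_4)$. Theorem~\ref{thm:minimal-coverages-extended} is stated only for $n\ge5$, so I will compute $\Hmin(t_4)$ by hand. Since $\gamma(t_4)=4$ is known, the smallest attractors are precisely the four-element hitting sets.

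The candidate constraints follow the proof of Lemma~\ref{lem:possible-minimal-representatives}, which is local and applies equally to $t_4$. Minimal coverages are represented by critical factors of length at most four, because every factor of $t_4$ of length $\ge5$ desubstitutes to a factor of $t_3$. These candidates are $aa$, $bb$, the eight MUSs of $t_4$, and possibly $ab,ba,aba,bab$. The MUS coverages are single intervals $[p,p+3]$ with $p\in\{2,3,4,6,8,10,11,12\}$, taken from Table~\ref{tab:base-occurrences}. Reading $t_4=abbabaabbaababba$ gives
\[
 \Cov_{t_4}(aa)=\{6,7,10,11\},\qquad \Cov_{t_4}(bb)=\{2,3,8,9,14,15\}.
\]
Because $abaaba$ is not a factor of $t_4$, I will compute $\Cov(aba)$ and $\Cov(bab)$ explicitly and check whether they contain some other constraint, as I expect they do, for instance the interval of a MUS.

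First step: the region structure. The intervals $[2,5]$ and $[12,15]$ are the coverages of $M_1$ and $M_8$. The intervals $[6,9]$ and $[8,11]$ are disjoint from both outer regions and overlap only in $\{8,9\}$. This will not by itself force two central points. I will obtain that from $[4,7]$, $[10,13]$, $\Cov(aa)$ and $\Cov(bb)$, together with the counting condition $|\Gamma|=4$, arguing by cases as in Lemma~\ref{lem:sixteen-witness}. If one point covered both $[6,9]$ and $[8,11]$, it would lie in $\{8,9\}$. The remaining three points would then have to meet $[2,5]$, $[12,15]$, $[3,6]$, $[4,7]$, $[10,13]$, $[11,14]$ and $\Cov(aa)$. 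I expect this to be impossible, or else it lands back in the stated form.

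Second step: translate each remaining coverage into a clause in $\ell,u,v,r$. For example, $[3,6]$ gives $\ell\ge3\lor u=6$, $[4,7]$ gives $\ell\ge4\lor u\le7$, $[10,13]$ gives $v\ge10\lor r\le13$, and $[11,14]$ gives $v=11\lor r\le14$. The $aa$ and $bb$ coverages, and any surviving $aba$ or $bab$ coverage, yield the longer disjunctions. Conditions such as $u\le9$ and $v\ge8$ come from the ordering $u<v$ together with the interval constraints. I will then check that the stated ten clauses are equivalent to the full system.

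Finally, I will count the solutions. The count of $87$ is a finite enumeration over $4\cdot\binom{6}{2}\cdot4=240$ tuples, which I would organise by the pair $(u,v)$. The main obstacle is establishing the $1{+}2{+}1$ region split rigorously, that is, excluding configurations with two points in an outer region. I will try first to derive this split from the disjoint triple $[2,5]$, $[6,9]\cup[8,11]$, $[12,15]$ together with the $aa$ and $bb$ constraints.
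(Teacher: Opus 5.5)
Your plan follows the paper's own argument. The paper lists the twelve inclusion-minimal coverages of $t_4$ in the appendix, derives the one-left/two-central/one-right split, turns each coverage that is not automatically hit into a clause, and counts $10+21+28+28$ by the value of $\ell$.

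However, your stated expectation about $aba$ and $bab$ is false, and it affects the final count. In $t_4$ one has $\Cov_{t_4}(aba)=[4,6]\cup[11,13]$ and $\Cov_{t_4}(bab)=[3,5]\cup[12,14]$. Neither contains four consecutive positions, so neither contains a MUS interval. Neither contains $\Cov_{t_4}(aa)$, $\Cov_{t_4}(bb)$, or the other. So both are inclusion-minimal; they are the coverages of the two MUSs of $t_3$ other than $aa$ and $bb$. They produce exactly the clauses $\ell\ge4\lor u=6\lor v=11\lor r\le13$ and $\ell\ge3\lor r\le14$. Discarding them gives $89$ instead of $87$. It would admit $\{3,7,10,14\}$ and $\{2,6,11,15\}$, which miss $\Cov_{t_4}(aba)$ and $\Cov_{t_4}(bab)$ respectively. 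So the ``any surviving'' branch of your plan is the one that actually occurs.

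Two smaller repairs are needed. First, the fact that every factor of length at least five desubstitutes to a factor of $t_3$ does not by itself exclude long critical representatives. You need that critical descent would produce a critical factor of $t_3$ of length at least four, and $t_3$ has none, because its six length-three factors are pairwise distinct. Alternatively, check directly that the only repeated length-four factors of $t_4$ are $abba$ and $baab$. These cannot be the prefix and suffix of a common length-five word, so every length-five factor is unique.

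Second, the region split you single out as the main obstacle is immediate. If only one point lay in $[6,11]$, it would have to lie in $[6,9]\cap[8,11]=\{8,9\}$, and then $\Cov_{t_4}(aa)=\{6,7,10,11\}\subseteq[6,11]$ would be missed. Hence exactly two points lie in $[6,11]$, and the disjoint intervals $[2,5]$ and $[12,15]$ take the other two. This also rules out positions $1$ and $16$.
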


Although the displayed formula is less visually appealing than the stable description, its origin is simple.  The twelve inclusion-minimal coverage constraints of $t_4$ force the coarse decomposition into one left, two central, and one right position.  Expanding the twelve intersection conditions and deleting logical redundancies gives precisely the ten clauses above.  Thus the proposition is not an arbitrary Boolean encoding of an exhaustive search: it is a compact form of the complete minimal-coverage system.

The number $87$ can then be recovered without testing all $\binom{16}{4}=1820$ subsets one by one.  Conditioning on the leftmost position $\ell=2,3,4,5$, the numbers of admissible choices are respectively
\[
 10,\qquad21,\qquad28,\qquad28,
\]
and their sum is $87$.  The exhaustive verifier described later independently checks that these are exactly the four-subsets hitting all factor coverages.

\subsection{Small-order certificates and the complete enumeration}
\label{subsec:exceptional-certificates}

The exceptional cases also have concise MUS formulations.  For arbitrary position sets $\Gamma$,
\[
 \Gamma\text{ attracts }t_5
 \quad\Longleftrightarrow\quad
 \Gamma\text{ hits every MUS of }t_5,t_4,t_3,
\]
and
\[
 \Gamma\text{ attracts }t_4
 \quad\Longleftrightarrow\quad
 \Gamma\text{ hits every MUS of }t_4,t_3.
\]
Unlike the stable three-generation statement of Corollary~\ref{cor:three-generation}, these two equivalences do not require $|\Gamma|=4$.  They follow from the explicit minimal-coverage antichains in Appendix~\ref{app:finite-tables}.

Combining the two exceptional orders with Theorem~\ref{thm:stable-characterization} gives the complete enumeration
\[
 |\Acal(t_n)|=
 \begin{cases}
 87,&n=4,\\
 40,&n=5,\\
 32,&n\ge6.
 \end{cases}
\]
For completeness, the preceding four orders satisfy
$
(\gamma(t_0),\gamma(t_1),\gamma(t_2),\gamma(t_3))=(1,2,2,3)
$
and have respectively
\[
 1,\quad1,\quad2,\quad12
\]
smallest attractors.  These finite values are also reproduced by the independent verification described in Section~\ref{sec:computational-verification}.

\section{Independent computational verification}
\label{sec:computational-verification}

The theorems above are proved combinatorially.  Computation is used only as an independent consistency check and to certify the explicitly finite exceptional cases.  Because several formulas in the proof involve many interval endpoints, an implementation that follows a substantially different route is useful for detecting transcription errors, indexing shifts, or unnoticed boundary cases.

\subsection{A verifier that does not assume the answer}
\label{subsec:independent-verifier}

The independent program constructs $t_n$ directly from the parity of the binary digit sum of each position, rather than recursively reusing the routines from which the symbolic formulas were derived.  It then performs the following steps.

\begin{enumerate}[label=\arabic*.]
\item It enumerates every distinct nonempty factor of the ambient word $t_n$.
\item For each factor it finds all, possibly overlapping, occurrences by an ordinary exact string search.
\item It represents the coverage as the ordinary set of text positions belonging to at least one occurrence interval.
\item It computes the inclusion-minimal coverage antichain by direct pairwise set containment.
\item Independently, it searches for transversals of the coverage system up to cardinality four.
\end{enumerate}

The transversal search is exhaustive.  At a partial solution it selects a coverage not yet hit and branches over every position of that coverage.  Every completing transversal must choose at least one of those positions, so no solution can be lost.  Memoization merges identical search states, but no expected attractor family, expected count, or geometric restriction is used to prune a branch.  Expected values are compared with the output only after the enumeration is complete.

This distinction matters methodologically.  The verifier is not a computer-assisted proof of the uniform theorems; rather, it is a second implementation of the definitions that can falsify the symbolic derivations on all tested orders.

\subsection{Exhaustive finite data}
\label{subsec:verification-data}

The principal exhaustive computations are summarized below.  Every factor and every coverage is taken in the ambient word $t_n$.

\begin{table}[ht]
\centering
\caption{Independent exhaustive computations for the first four orders in the main range.}
\label{tab:independent-computations}
\begin{tabular}{rrrrr}
\toprule
$n$ & nonempty factors & minimal coverages & smallest attractors & certificate MUSs\\
\midrule
4 & 100   & 12 & 87 & 12\\
5 & 392   & 18 & 40 & 20\\
6 & 1,560 & 26 & 32 & 24\\
7 & 6,232 & 34 & 32 & 24\\
\bottomrule
\end{tabular}
\end{table}

For every row the search finds no transversal of size at most three.  Thus the computations independently recover the known lower bound $\gamma(t_n)\ge4$ in these cases.  For $t_4$, the $87$ exhaustive solutions are exactly the sets satisfying Proposition~\ref{prop:t4-extended}.  For $t_5$, equality is checked at the level of sets:
$
 \Acal(t_5)=\FL_5\cup\mathcal E_5\cup\FR_5,
$
not merely at the level of cardinalities.

The program also reconstructs the complete minimal-coverage antichains for $t_4$ and $t_5$ from scratch and verifies that they coincide with the tables in Appendix~\ref{app:finite-tables}.  This simultaneously checks the two small-order MUS certificates described above.

\subsection{Checks of the uniform ingredients}
\label{subsec:uniform-checks}

The independent implementation also exercises the structural ingredients outside the finite exceptional cases.

For every $4\le m\le8$ and every $r\ge0$ with $m+r+1\le10$, it verifies both the trimmed-image identity
$
 M_i^{(m+1)}=\operatorname{trim}(\mu(M_i^{(m)}))
$
and the full occurrence equality of the lifting lemma.  It checks the paired-boundary property directly against every factor of $t_4,\ldots,t_9$ and verifies every one of the $32$ proposed stable attractors against all factor coverages for $6\le n\le9$.

The canonical minimal-coverage theorem is tested independently for every $5\le n\le9$.  The program constructs the coverages of $aa$, $bb$, and all MUSs from generations $4$ through $n$ without being given the expected antichain.  It verifies that these coverages are distinct and pairwise incomparable, that their number is $8n-22$, and that the coverage of every factor of $t_n$ contains at least one of them.  The resulting minimal-coverage counts are
\[
 18,\quad26,\quad34,\quad42,\quad50
\]
for $n=5,6,7,8,9$, respectively.

The critical-descent mechanism is checked separately.  In every ambient order from four through nine the program recomputes all critical factors of lengths two through four; for every critical factor of length at least five through order nine it decodes the unique morphic phase, checks that the factor is a two-sided trim, and verifies that criticality descends to the decoded preimage.  It also checks directly the four short-factor dominance inclusions used in the proof of Lemma~\ref{lem:possible-minimal-representatives}.

Finally, for each $6\le n\le9$ the program constructs the counterexample
$
 \Gamma_n^\star=\{3h/2,3h,4h,6h\}
$
from Proposition~\ref{prop:depth-optimality}, recomputes all MUSs of $t_n,t_{n-1},t_{n-2}$, and confirms that every coverage from the first two generations is hit while $\Cov_{t_n}(M_3^{(n-2)})$ is missed.  It also checks the five-position counterexample showing that the stable three-generation certificate cannot be extended to arbitrary cardinalities.

\subsection{A second check of the interval geometry}
\label{subsec:parametric-check}

The sixteen-witness proof can be checked in a way that is independent of the enumeration of particular powers of two.  Write $h=4k$ and partition $[1,8h]$ at every interval start and at every interval end plus one.  Each event position is an affine integer function of $k$.  For $k\ge3$ their order is fixed, producing $49$ cells on which membership in all sixteen witness coverages is constant.

Exactly two products of four cells---one cell inside each of $A_2,B_7,B_2,A_7$---hit all sixteen witnesses.  Every selected cell contains two positions, so each product describes $2^4=16$ quadruples.  These are precisely the two stable families.  The remaining integer value $k=2$, corresponding to $h=8$, gives the same products when checked separately.

This parametric computation verifies the interval arithmetic of the sixteen-witness lemma for every multiple $h=4k\ge8$, not merely for the powers of two arising from Thue--Morse words.  The separate MUS-lifting check verifies that these intervals are indeed the factor coverages claimed in the proof.

\section{Discussion}
\label{sec:discussion-extended}

The results admit two complementary interpretations.  One concerns the geometry of optimum solutions; the other concerns the structure of the attractor condition itself.

\subsection{Why the number of smallest attractors stabilizes}
\label{subsec:why-32}

For $n\ge6$, every smallest attractor chooses one endpoint from the three mandatory boundary pairs
\[
 \{2h,2h+1\},\qquad
 \{4h,4h+1\},\qquad
 \{6h,6h+1\},
\]
and chooses its fourth position from exactly one of the reflected pairs
\[
 \{3h,3h+1\},
 \qquad
 \{5h,5h+1\}.
\]
Once these five two-position blocks are separated, increasing $n$ merely scales their distances.  It does not create new combinatorial choices.  This is why the number of smallest attractors becomes independent of the word length:
$
 2\cdot2^4=32.
$

The exceptional family $\mathcal E_5$ shows what fails one generation earlier.  At $n=5$ two central positions can still cooperate across a short gap in a way that is impossible once the relevant intervals have separated.  Thus the transition from $40$ to $32$ solutions is not an arbitrary small-order anomaly; it marks the point at which the recursive boundary geometry becomes fully separated.

\subsection{Canonical constraints versus optimum certificates}
\label{subsec:two-levels}

The minimal-coverage theorem reveals a different form of structure.  Without any cardinality restriction, the attractor condition has an exact canonical description using
$
 8n-22
$
pairwise incomparable constraints.  By Lemma~\ref{lem:canonical-irredundancy-extended}, none of them can be removed from an exact certificate drawn from factor coverages.

At optimum cardinality the situation changes drastically.  Four positions cannot hit the constraints independently: satisfying some of them already determines where the remaining positions must lie.  This extra combinatorial information collapses the linear-size canonical system to the $24$ MUS coverages of three consecutive generations, and only sixteen carefully chosen members are needed to force the final geometry.

It is useful to keep these two statements conceptually separate:
\[
 \boxed{\text{all attractors}}
 \Longleftrightarrow
 \boxed{8n-22\text{ canonical constraints}},
\]
whereas
\[
 \boxed{|\Gamma|=4}
 \Longrightarrow
 \boxed{24\text{ MUS constraints}}
 \Longrightarrow
 \boxed{16\text{ forcing witnesses}}.
\]
The first line describes the intrinsic constraint complexity of the word.  The second exploits the rigidity of a four-position transversal.

\subsection{What is specific to Thue--Morse?}
\label{subsec:specific-vs-portable}

Several ingredients of the proof use special features of the Thue--Morse morphism.  The explicit four-boundary geometry, the exact list of eight MUSs per generation, and the two stable attractor families are specific to this word.  The particularly simple trimmed recurrence
$
 M^{(m+1)}=\operatorname{trim}(\mu(M^{(m)}))
$
also depends on the binary uniform morphism and on the closed MUS description of Kishi et al.

Other parts of the argument are more portable.  The passage from arbitrary factor coverages to inclusion-minimal ones is completely general.  So is the canonical irredundancy lemma.  More suggestively, the Section~\ref{sec:canonical-coverages} strategy
\[
 \text{minimal coverage}
 \Rightarrow
 \text{frequency-critical representative}
 \Rightarrow
 \text{desubstitution}
\]
uses recognizability-like synchronization to reduce a global coverage question to a finite set of short factors.  This suggests a possible template for other uniformly morphic families whenever a sufficiently strong phase synchronization and a manageable description of critical factors are available.

Similarly, the certificate-depth viewpoint does not depend on the numerical value three.  For Thue--Morse, current-generation MUSs constrain candidate positions locally, while two older generations provide the missing global information.  This motivates studying, for other morphic families, the minimum number of consecutive source generations needed to certify smallest attractors and whether such a depth can be controlled by recognizability or occurrence-preservation properties of the underlying morphism.

These are directions suggested by the present proof rather than claims established here.  In particular, we do not assert that bounded-depth MUS certificates exist for every uniformly morphic family.

\subsection{The role of occurrence transport}
\label{subsec:role-occurrence-transport}

Occurrence transport is the technical bridge connecting the local MUS description with the global attractor geometry.  A MUS is unique only in its source word.  Once embedded in a later generation it may occur many times, and the attractor condition depends on the union of all of these occurrences.  The exact doubling law makes those ambient coverages computable from constant base data.

The point is therefore not merely that one can generate the next MUS from the previous one.  What matters is that the complete set of occurrence positions is transported exactly.  This is what allows both kinds of results in the paper: the linear-size canonical certificate for arbitrary attractors and the constant-size witness system for optimum four-position sets.

\subsection{Computation as verification rather than discovery in the proof}
\label{subsec:role-computation}

The final theorems are uniform and do not rely on checking a finite range of orders.  Computation nevertheless plays a useful supporting role.  It resolves the genuinely finite exceptional cases, gives an independent implementation of the definitions, and checks delicate interval and indexing calculations on a range large enough to expose the recursive pattern.

Keeping this role explicit is important.  The structural proof explains why the answer stabilizes, why the value is $32$, why three MUS generations are needed, and why the minimal-coverage count is $8n-22$.  Exhaustive computation alone would confirm these patterns for particular orders but would not supply those explanations.

\section{Conclusion}
\label{sec:conclusion-extended}

We have completely characterized the smallest string attractors of the finite Thue--Morse words.  The number of smallest attractors is
\[
 87,\quad40,\quad32,\quad32,\ldots
\]
from orders four onward.  For every $n\ge6$, the stable value $32$ arises from two explicit reflected families obtained by independently selecting endpoints of four recursive boundary pairs.

The classification is accompanied by a broader structural result.  For every $n\ge5$, the complete inclusion-minimal factor-coverage antichain consists of the coverages of $aa$, $bb$, and the eight MUSs of every source generation from four through $n$.  Its size is exactly
$
 8n-22,
$
and it is the unique irredundant exact certificate at the level of factor coverages for attractors of arbitrary cardinality.

Under the minimum-cardinality restriction this universal condition collapses.  The MUSs of three consecutive generations give an exact $24$-factor certificate, depth three is necessary within that hierarchy, and a distinguished set of sixteen witnesses already forces the geometry of every smallest attractor.

The resulting picture has two complementary scales: a canonical linear-size constraint system for arbitrary attractors and a constant-size certificate at optimum cardinality.  The recursive-boundary, occurrence-transport, and critical-desubstitution mechanisms behind these two scales are discussed in Section~\ref{sec:discussion-extended}.

\bibliographystyle{plain}
\bibliography{bibliography}

@inproceedings{Banbara2026,
  author       = {Banbara, Mutsunori and Bannai, Hideo and Guo, Peaker and K{\"o}ppl, Dominik and Mieno, Takuya and Okamoto, Yoshio},
  title        = {The Smallest String Attractors of Fibonacci and Period-Doubling Words},
  booktitle    = {37th Annual Symposium on Combinatorial Pattern Matching (CPM 2026)},
  editor       = {Bille, Philip and Prezza, Nicola},
  series       = {Leibniz International Proceedings in Informatics (LIPIcs)},
  volume       = {369},
  pages        = {33:1--33:21},
  publisher    = {Schloss Dagstuhl -- Leibniz-Zentrum f{\"u}r Informatik},
  year         = {2026},
  doi          = {10.4230/LIPIcs.CPM.2026.33},
  url          = {https://doi.org/10.4230/LIPIcs.CPM.2026.33}
}

@misc{Beal2025,
  author       = {B{\'e}al, Marie-Pierre and Crochemore, Maxime and Romana, Giuseppe},
  title        = {Checking and Producing Word Attractors},
  year         = {2025},
  eprint       = {2509.08503},
  archivePrefix = {arXiv},
  primaryClass = {cs.DS},
  url          = {https://arxiv.org/abs/2509.08503}
}

@article{Cassaigne2024,
  author       = {Cassaigne, Julien and Gheeraert, Florent and Restivo, Antonio and Romana, Giuseppe and Sciortino, Marinella and Stipulanti, Manon},
  title        = {New String Attractor-Based Complexities for Infinite Words},
  journal      = {Journal of Combinatorial Theory, Series A},
  volume       = {208},
  pages        = {105936},
  year         = {2024},
  doi          = {10.1016/j.jcta.2024.105936},
  url          = {https://doi.org/10.1016/j.jcta.2024.105936}
}

@inproceedings{Dolce2023,
  author       = {Dolce, Francesco},
  title        = {String Attractors for Factors of the Thue--Morse Word},
  booktitle    = {Combinatorics on Words (WORDS 2023)},
  series       = {Lecture Notes in Computer Science},
  volume       = {13899},
  pages        = {117--129},
  publisher    = {Springer},
  year         = {2023},
  doi          = {10.1007/978-3-031-33180-0_9},
  url          = {https://doi.org/10.1007/978-3-031-33180-0_9}
}

@article{IlieSmyth2011,
  author       = {Ilie, Lucian and Smyth, W. F.},
  title        = {Minimum Unique Substrings and Maximum Repeats},
  journal      = {Fundamenta Informaticae},
  volume       = {110},
  number       = {1--4},
  pages        = {183--195},
  year         = {2011},
  doi          = {10.3233/FI-2011-536},
  url          = {https://doi.org/10.3233/FI-2011-536}
}

@inproceedings{KempaPrezza2018,
  author       = {Kempa, Dominik and Prezza, Nicola},
  title        = {At the Roots of Dictionary Compression: String Attractors},
  booktitle    = {Proceedings of the 50th Annual ACM SIGACT Symposium on Theory of Computing (STOC 2018)},
  pages        = {827--840},
  publisher    = {ACM},
  year         = {2018},
  doi          = {10.1145/3188745.3188814},
  url          = {https://doi.org/10.1145/3188745.3188814}
}

@inproceedings{Kishi2026,
  author       = {Kishi, Kaisei and Guo, Peaker and Urbina, Cristian and Bannai, Hideo},
  title        = {On Occurrence-Preserving Morphisms},
  booktitle    = {37th Annual Symposium on Combinatorial Pattern Matching (CPM 2026)},
  editor       = {Bille, Philip and Prezza, Nicola},
  series       = {Leibniz International Proceedings in Informatics (LIPIcs)},
  volume       = {369},
  pages        = {24:1--24:16},
  publisher    = {Schloss Dagstuhl -- Leibniz-Zentrum f{\"u}r Informatik},
  year         = {2026},
  doi          = {10.4230/LIPIcs.CPM.2026.24},
  url          = {https://doi.org/10.4230/LIPIcs.CPM.2026.24}
}

@inproceedings{Kutsukake2020,
  author       = {Kutsukake, K. and Matsumoto, T. and Nakashima, Y. and Inenaga, S. and Bannai, H. and Takeda, M.},
  title        = {On Repetitiveness Measures of Thue--Morse Words},
  booktitle    = {String Processing and Information Retrieval (SPIRE 2020)},
  series       = {Lecture Notes in Computer Science},
  volume       = {12303},
  pages        = {213--220},
  publisher    = {Springer},
  year         = {2020},
  doi          = {10.1007/978-3-030-59212-7_15},
  url          = {https://doi.org/10.1007/978-3-030-59212-7_15}
}

@inproceedings{Mantaci2019,
  author       = {Mantaci, Sabrina and Restivo, Antonio and Romana, Giuseppe and Rosone, Giovanna and Sciortino, Marinella},
  title        = {String Attractors and Combinatorics on Words},
  booktitle    = {Proceedings of the 20th Italian Conference on Theoretical Computer Science (ICTCS 2019)},
  series       = {CEUR Workshop Proceedings},
  volume       = {2504},
  pages        = {57--71},
  year         = {2019},
  url          = {https://ceur-ws.org/Vol-2504/paper8.pdf}
}

@article{Mantaci2021,
  author       = {Mantaci, Sabrina and Restivo, Antonio and Romana, Giuseppe and Rosone, Giovanna and Sciortino, Marinella},
  title        = {A Combinatorial View on String Attractors},
  journal      = {Theoretical Computer Science},
  volume       = {850},
  pages        = {236--248},
  year         = {2021},
  doi          = {10.1016/j.tcs.2020.11.006},
  url          = {https://doi.org/10.1016/j.tcs.2020.11.006}
}

@misc{Mousavi2016,
  author       = {Mousavi, Hamoon},
  title        = {Automatic Theorem Proving in Walnut},
  year         = {2016},
  eprint       = {1603.06017},
  archivePrefix = {arXiv},
  primaryClass = {cs.FL},
  url          = {https://arxiv.org/abs/1603.06017}
}

@article{Schaeffer20,
  author       = {Jeffrey O. Shallit},
  title        = {String Attractors for Automatic Sequences},
  journal      = {CoRR},
  volume       = {abs/2012.06840},
  year         = {2020},
  url          = {https://doi.org/10.48550/arXiv.2012.06840},
  doi          = {10.48550/ARXIV.2012.06840},
  eprinttype   = {arXiv},
  eprint       = {2012.06840},
  bibsource    = {dblp computer science bibliography, https://dblp.org}
}

\clearpage 

\appendix
\section{Finite coverage tables for \texorpdfstring{$t_4$ and $t_5$}{t4 and t5}}
\label{app:finite-tables}

This appendix records the finite data underlying Propositions~\ref{prop:t4-extended} and~\ref{prop:t5-extended}.  Because Proposition~\ref{prop:minimal-coverage-reduction} reduces the attractor property to the inclusion-minimal coverage antichain, these tables are complete finite certificates rather than merely collections of necessary conditions.

\subsection{The twelve minimal coverages of \texorpdfstring{$t_4$}{t4}}

For $t_4$, there are $100$ distinct nonempty factors and $99$ distinct factor coverages.  Exactly twelve of the latter are inclusion-minimal.  They are the coverages of the eight MUSs of $t_4$ and the four relevant MUSs of $t_3$:

\begin{center}
\begin{tabular}{ll@{\qquad}ll}
\toprule
factor & coverage & factor & coverage\\
\midrule
$bbab$ & $[2,5]$ & $baba$ & $[3,6]$\\
$abaa$ & $[4,7]$ & $aabb$ & $[6,9]$\\
$bbaa$ & $[8,11]$ & $aaba$ & $[10,13]$\\
$abab$ & $[11,14]$ & $babb$ & $[12,15]$\\
$aa$ & $[6,7]\cup[10,11]$ & $bb$ & $[2,3]\cup[8,9]\cup[14,15]$\\
$aba$ & $[4,6]\cup[11,13]$ & $bab$ & $[3,5]\cup[12,14]$\\
\bottomrule
\end{tabular}
\end{center}

Every four-position hitting set of these twelve coverages contains one point in $[2,5]$, two in $[6,11]$, and one in $[12,15]$.  Expanding the twelve intersection predicates and removing redundant implications yields the ten clauses of Proposition~\ref{prop:t4-extended}.  Direct counting by the value of $\ell$ gives $10+21+28+28=87$ solutions.

\subsection{The eighteen minimal coverages of \texorpdfstring{$t_5$}{t5}}

For $t_5$, there are $392$ distinct nonempty factors and $391$ distinct factor coverages.  Exactly eighteen are inclusion-minimal.  The eight current-generation coverages are

\begin{center}
\begin{tabular}{ll@{\qquad}ll}
\toprule
factor & coverage & factor & coverage\\
\midrule
$abaabb$ & $[4,9]$ & $aabbaa$ & $[6,11]$\\
$bbaaba$ & $[8,13]$ & $babbab$ & $[12,17]$\\
$abaaba$ & $[16,21]$ & $babbaa$ & $[20,25]$\\
$bbaabb$ & $[22,27]$ & $aabbab$ & $[24,29]$\\
\bottomrule
\end{tabular}
\end{center}

The remaining ten minimal coverages are

\begin{center}
\begin{tabular}{ll}
\toprule
factor & coverage\\
\midrule
$aaba$ & $[10,13]\cup[18,21]$\\
$abab$ & $[11,14]\cup[19,22]$\\
$babb$ & $[12,15]\cup[20,23]$\\
$bbaa$ & $[8,11]\cup[22,25]$\\
$aabb$ & $[6,9]\cup[24,27]$\\
$bbab$ & $[2,5]\cup[14,17]\cup[26,29]$\\
$baba$ & $[3,6]\cup[15,18]\cup[27,30]$\\
$abaa$ & $[4,7]\cup[16,19]\cup[28,31]$\\
$aa$ & $[6,7]\cup[10,11]\cup[18,19]\cup[24,25]\cup[30,31]$\\
$bb$ & $[2,3]\cup[8,9]\cup[14,15]\cup[22,23]\cup[26,27]$\\
\bottomrule
\end{tabular}
\end{center}

Every four-position transversal of these eighteen masks contains one point in $\{8,9\}$ and one in $\{24,25\}$.  Its central pair belongs to one of the three classes
\[
 \{12,13\}\times\{16,17\},
 \qquad
 \{(14,18),(15,19)\},
 \qquad
 \{16,17\}\times\{20,21\}.
\]
The outer choices are independent.  The three classes therefore produce $16$, $8$, and $16$ attractors, respectively, giving the total $40$.

\section{Details of the independent verification}
\label{app:independent-verification}

The independent verifier is intentionally separate from the implementation used during the derivation of the symbolic formulas.  Its construction follows the definitions directly and does not import helper routines from the original program.

For each tested order it generates the Thue--Morse word, enumerates every distinct factor, computes all overlapping occurrences, builds the corresponding coverage set, removes duplicate coverage sets, and then filters the remaining family by inclusion.  For the transversal computation it branches only on positions belonging to a currently unhit constraint; this is an exhaustive hitting-set search, not a heuristic one.

Besides the values in Table~\ref{tab:independent-computations}, the verifier performs the following independent checks:
\begin{itemize}
\item the complete $t_4$ and $t_5$ antichains in Appendix~\ref{app:finite-tables};
\item equality of the exhaustive attractor sets with the descriptions in Propositions~\ref{prop:t4-extended} and~\ref{prop:t5-extended};
\item the trimmed MUS identity and full occurrence transport through ambient order ten;
\item the centered/paired-boundary property through $t_9$;
\item every proposed stable attractor against every distinct factor for $6\le n\le9$;
\item the minimal-coverage theorem for $5\le n\le9$;
\item the short critical-factor lists and critical descent through order nine;
\item the depth-two counterexample and the five-position limitation of the stable MUS certificate.
\end{itemize}

No random sampling is used in any of these tests.

\section{Indexing conventions in the cited Thue--Morse literature}
\label{app:indexing}

Two indexing conventions appear in the literature used in this paper.  Kutsukake et al.~\cite{Kutsukake2020} use the same convention as we do:
\[
 t_0=a,
 \qquad
 |t_n|=2^n.
\]
Kishi et al.~\cite{Kishi2026} write
$
 T_i=\mu^{i-1}(a),
$
so that
$
 T_i=t_{i-1}.
$
Consequently their formula for the MUSs of $T_i$ translates into our Theorem~\ref{thm:kishi-mus} after the substitution $m=i-1$.  The shift is also consistent with the lengths $|T_i|=2^{i-1}$ and $|t_n|=2^n$.

\end{document}